\numberwithin{equation}{section}
\DeclareSymbolFont{script}{U}{eus}{m}{n}
\DeclareSymbolFontAlphabet{\amathscr}{script}
\DeclareMathSymbol{\Wedge}{0}{script}{"5E}
\DeclareMathAlphabet{\mathrmsl}{OT1}{cmr}{m}{sl}
\newenvironment{dedication}
  {
   \thispagestyle{empty}
   \vspace*{0.2cm}
   \itshape             
   \raggedleft          
  }
  {\par 
   \vspace{1cm} 
  }
\newtheorem{lemma}{Lemma}[section]
\newtheorem{prop}{Proposition}[section]
\newtheorem{thm}{Theorem}[section]
\newtheorem{cor}{Corollary}[section]
\newtheorem{assumption}{Assumption}[section]
\theoremstyle{definition}
\newtheorem{defn}{Definition}[section]
\theoremstyle{remark}
\newtheorem{rem}{Remark}[section]
\newtheorem{ex}{Example}[section]
\theoremstyle{Conjecture}
\newtheorem{conj}{Conjecture}[section]
\theoremstyle{Problem}
\newtheorem{problem}{Problem}[section]
\newcommand{\Ds}{{\amathscr D}}
\newcommand{\R}{{\mathbb R}}
\newcommand{\C}{{\mathbb C}}
\newcommand{\N}{{\mathbb N}}
\newcommand{\Q}{{\mathbb Q}}
\newcommand{\T}{{\mathbb T}}
\newcommand{\Sph}{{\mathbb S}}
\newcommand{\cL}{{\mathcal L}}
\newcommand{\cO}{{\mathcal O}}
\newcommand{\cR}{{\mathcal R}}
\newcommand{\Scal}{{\rm Scal}}
\newcommand{\vol}{\mathit{vol}}
\newcommand{\Vol}{\mathit{Vol}}
\newcommand{\grad}{\mathrm{grad}}
\newcommand{\kt}{\hat{\mathfrak t}}
\newcommand{\tor}{{\mathfrak t}}
\newcommand{\Hess}{\mathop{\mathrm{Hess}}}
\newcommand{\Pol}{\mathrm P}
\newcommand{\mL}{{\mathcal{ L}}}
\newcommand{\ra}{\rightarrow}
\newcommand{\Aff}{{\rm Aff}}
\newcommand{\Fut}{{\rm Fut}}
\newcommand{\Aut}{\mathrm{Aut}}
\newcommand{\tr}{{\mathrmsl tr}}
\newcommand{\Ric}{{\mathrm{Ric}}}
\begin{document}

\begin{dedication}
Dedicated to Paul Gauduchon on the occasion of his 80th birthday,\\ with respect and gratitude.  
\end{dedication}

\title[]{From K\"ahler Ricci solitons to Calabi-Yau K\"ahler cones}

\author[V. Apostolov]{Vestislav Apostolov} \address{Vestislav Apostolov \\ D{\'e}partement de Math{\'e}matiques\\ UQAM\\ C.P. 8888 \\ Succursale Centre-ville \\ Montr{\'e}al (Qu{\'e}bec) \\ H3C 3P8 \\ Canada \\ and \\ Institute of Mathematics and Informatics, Bulgarian Academy of Sciences}
\email{apostolov.vestislav@uqam.ca}

\author[A. Lahdili]{Abdellah Lahdili} \address{Abdellah Lahdili\\ D{\'e}partement de Math{\'e}matiques\\ UQAM\\ C.P. 8888 \\ Succursale Centre-ville \\ Montr{\'e}al (Qu{\'e}bec) \\ H3C 3P8 \\ Canada}
\email{lahdili.abdellah@gmail.com}

\author[E. Legendre]{Eveline Legendre}\address{Eveline Legendre\\ Universit\'e Claude Bernard Lyon 1\\
Institut Camille Jordan \'equipe AGL\\ 21 av. Claude Bernard\\
69100 Villeurbanne\\ France}
\email{eveline.legendre@math.univ-lyon1.fr}

\thanks{V.A. was supported in part by  an NSERC Discovery Grant, an FRQNT Team Grant and a Connect Talent Grant of the Region de Pays de la Loire. He is grateful to the Institut Camille Jordan Lyon 1 for hospitality during the preparation of this work. A.L. was supported by the Villum Young Investigator grant 0019098 and UQAM. E.L. was supported in part by France ANR project BRIDGES No ANR-21-CE40-0017, she is grateful to the Simons foundation, CRM and FQRNT for offering her a visiting position during the preparation of this work. The authors thank the referees for their useful remarks.}

\begin{abstract} We show that if $X$ is a smooth Fano manifold which caries a K\"ahler Ricci soliton, then the canonical cone of the product of $X$ with a complex projective space of sufficiently large dimension is a Calabi--Yau cone, i.e. admits a Ricci-flal K\"ahler cone metric. This can be seen as an asymptotic version of a conjecture by Mabuchi and Nikagawa.  This result is obtained by the openness of the set of weight functions $v$ over the momentum polytope of a given smooth Fano manifold, for which a $v$-soliton exists. We discuss other ramifications of this approach, including a Licherowicz type obstruction  to the existence of a K\"ahler Ricci soliton and a Fujita type volume bound for the existence of a  $v$-soliton.
\end{abstract}

\date{\today}
\maketitle

\section{Introduction} A number of different notions of \emph{special} K\"ahler metrics have emerged in the last 20 years or so, in connection with Calabi's seminal program \cite{Calabi55} of finding a canonical representative of a given de Rham class of K\"ahler metrics on a smooth compact K\"ahler manifold $X$. Perhaps the most studied notion of all, introduced by Calabi himself, is that of \emph{constant scalar curvature} K\"ahler (cscK) metrics.

In the case of a smooth Fano variety $X$ endowed with its anti-canonical de Rham K\"ahler class $2\pi c_1(X)$, a cscK metric in $2\pi c_1(X)$ is necessarily a K\"ahler--Einstein metric with scalar curvature equal to $2n$. The existence problem for such K\"ahler metrics is understood in terms of the Yau--Tian--Donaldson (YTD) conjecture~\cite{Yau90, Tia97, Do02} which states that $X$ admits a K\"ahler-Einstein metric in $2\pi c_1(X)$ if and only if the anticanonical polarization $(X, K^{-1}_X)$ is K-polystable. There are, by now, many different proofs of this conjecture \cite{Tia15, CSW18, Zha23, BBJ21, Li22}, following the initial work of Chen–Donaldson–Sun \cite{CDS1, CDS2, CDS3} (who proved K-polystability implies existence) and Tian and Berman (who proved existence implies K-stablity)\cite{Tia97,Berman16}.

Beyond the study of K\"ahler-Einstein metrics on $(X, 2\pi c_1(X))$, other notions of canonical K\"ahler metrics have been considered. These  allow to treat cases where a K\"ahler-Einstein metric do not exist due to the classical obstructions  in terms of the automorphisms of $X$ \cite{matsushima, licherowicz, Futaki0}. Tian-Zhu~\cite{TZ1,TZ2} initiated a systematic study of the so-called \emph{K\"ahler-Ricci solitons} (KRS) on $(X, 2\pi c_1(X))$, whereas  Martelli--Sparks--Yau~\cite{MSY} developed the theory of Calabi--Yau cone structures (or, equivalently, Sasaki--Einstein structures) defined on the affine cone $K_X^{\times}$ associated to $X$. These works prompted separated investigations of the corresponding existence theories, and the formulation and proofs of appropriate modifications of the YTD conjecture in each case, see respectively \cite{DSz,CSz}. 

\bigskip
More recently, there have been developments providing  a framework to treat the existence problems mentioned above all together. The unifying geometric object is that of a \emph{$v$-soliton} K\"ahler metric $\omega \in 2\pi c_1(X)$ (called $g$-soliton in \cite{HL}), which was introduced in a foundational work by Mabuchi ~\cite{Mabuchi-multiplier}, followed by further comprehensive studies by Berman--Witt Nystr\"om~\cite{BN} and Han--Li~\cite{HL}. A $v$-soliton  is defined in terms of a fixed maximal compact torus $\T\subset \Aut(X)$ with associated canonical polytope $\Pol_X \subset \left({\rm Lie}(\T)\right)^*$,  and  a positive smooth function $v(x)>0$ on $\Pol_X$, via the equation
\[ {\rm Ric}(\omega) - \omega = \frac{1}{2}dd^c \log v(\mu_{\omega}).\]
In the above formula, $\omega$ is a $\T$-invariant K\"ahler metric in $2\pi c_1(X)$,  ${\rm Ric}(\omega)\in 2\pi c_1(X)$ is its Ricci form and $\mu_\omega : X \to \Pol_X$ is the canonically normalized $\T$-momentum map. Thus, K\"ahler--Einstein metrics correspond to $1$-solitons, KRS metrics to $e^{\ell}$-solitons~\cite{TZ1},  and Calabi--Yau cone structures on $K_X^{\times}$ to $\ell^{-(n+2)}$-solitons~\cite{AJL}, where $\ell(x)$ is a suitably defined (and in general different for each  case) affine-linear function on $\Pol_X$. As an outcome, the work \cite{HL} gives a YTD type correspondence for the existence of a $v$-soliton on $(X, \T, 2\pi c_1(X))$, expressed in terms of a suitable notion of \emph{uniform $v$-weighted Ding stability}  of $(X, \T, 2\pi c_1(X)))$ on $\T_{\C}$-equivariant test-configurations.

\bigskip
Despite the above remarkable progress, it remains a very challenging problem to use effectively the YTD correspondence in order to produce examples of $v$-solitons. A specific ramification in this general direction is the following
\begin{problem}\label{main-problem} Suppose $(X, \T, 2\pi c_1(X))$ is a smooth Fano manifold. Describe the set $\mathcal{S}(X)$ of positive smooth functions $v$ on $\Pol_X$,  such that  $(X, \T, 2\pi c_1(X))$ admits a $v$-soliton.
\end{problem}
By an idea going back to A. Futaki~\cite{Futaki0}, it turns out that $\mathcal{S}(X)\subset \mathcal{F}(X)$ where $\mathcal{F}(X)$ is the subset
\[\mathcal{F}(X):=\left\{ v\in C^{\infty}_{>0}(\Pol_X) \, | \, \int_{\Pol_X} \langle \zeta, x\rangle v(x) d\mu_{\rm DH} =0, \, \, \forall \zeta \in {\rm Lie}(\T)\right\}. \]
In the above formula,  $d\mu_{\rm DH}$ denotes the Duistermaat--Heckman measure~\cite{DH} on $\Pol_X$ induced by $2\pi c_1(X)$. Indeed, $\mathcal{F}(X)$ is identified with the set of weight functions $v$ for which the corresponding $v$-Futaki invariant $\Fut_v : {\rm Lie}(\T) \to \R$  on $X$ vanishes. Clearly,  $\mathcal{F}(X)$ is a relatively closed convex cone in $C^{\infty}_{>0}(\Pol_X)$ (in the relative $C^0(\Pol_X)$-topology).

Problem~\ref{main-problem} is particularly interesting when $\mathcal{S}(X)$ is a non-empty subset of $\mathcal{F}(X)$. We know in this case that the usual Calabi--Lichnerowicz--Matsushima obstruction vanishes, i.e. the connected component of the identity $\Aut_{\circ}(X)^{\T}$ of the group of complex automorphisms of $X$ commuting with $\T$ is reductive (see. e.g. \cite{LSW} for a general statement). In \cite{HL}, it is shown that if $X$ is a smooth \emph{toric} Fano variety, one has the equality $\mathcal{S}(X)=\mathcal{F}(X)$. Similar \emph{weight insensitive} highly symmetric Fano examples were recently found by L. Wang~\cite{W} and T. Delcroix~\cite{Del};   these works also demonstrate examples satisfying $\mathcal{S}(X)\subsetneq\mathcal{F}(X)$.

Despite the above progress,  a general understanding of $\mathcal{S}(X)$ remains an open problem. While we do not attempt in this work to obtain any deep structure result for $\mathcal{S}(X)$, we underline a basic property which provides a strong openness result \`a la LeBrun--Simanca~\cite{LS}. To state it, we introduce the space of weights
\[ \mathcal{D}(X) :=\left\{ v\in C^{\infty}_{>0}(\Pol_X) \, \Big| \, {\bf D}_v \, \textrm{is}  \, \T_\C-\textrm{coercive}\right\}, \]
where ${\bf D}_v$ denotes the $v$-weighted Ding functional (see Definitions~\ref{d:v-Ding} and \ref{d:v-Ding-coersive}).
\begin{thm}\label{thm:openedness} $\mathcal{D}(X)$ is an open convex cone in $\mathcal{F}(X)$ with respect to the relative $C^{0}(\Pol_X)$-topology.
\end{thm}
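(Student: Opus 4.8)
The plan is to prove three things: (i) $\mathcal{D}(X)\subseteq\mathcal{F}(X)$; (ii) $\mathcal{D}(X)$ is a convex cone; (iii) $\mathcal{D}(X)$ is open in the relative $C^0(\Pol_X)$-topology. Statement (i) is essentially the Futaki-type obstruction already recalled in the introduction: if ${\bf D}_v$ is $\T_\C$-coercive then in particular it is bounded below along the one-parameter subgroups generated by $\zeta\in{\rm Lie}(\T)$, and the derivative of ${\bf D}_v$ along such an orbit is (a multiple of) $\Fut_v(\zeta)$; coercivity forces this linear functional to vanish, hence $\int_{\Pol_X}\langle\zeta,x\rangle v(x)\,d\mu_{\rm DH}=0$ for all $\zeta$, i.e. $v\in\mathcal{F}(X)$. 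The cone property in (iii) is immediate: ${\bf D}_{\lambda v}=\lambda {\bf D}_v + (\text{const depending on }\lambda)$ up to normalization, so coercivity is scale-invariant.

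For convexity, the key is the structure of the $v$-weighted Ding functional ${\bf D}_v = {\bf E}_v - {\bf L}_v$ (or however it is decomposed in Definition \ref{d:v-Ding}): the "entropy/energy" part ${\bf E}_v$ is \emph{linear} in $v$ (it is an integral over $\Pol_X$ against a $v$-weighted Monge--Ampère/Duistermaat--Heckman type measure, or over $X$ against $v(\mu_\omega)$ times a fixed volume form), while the term coming from $-\log\int_X e^{-\varphi}$ does not involve $v$ at all. Thus for fixed K\"ahler potential $\varphi$ the map $v\mapsto {\bf D}_v(\varphi)$ is affine. Given $v_0,v_1\in\mathcal{D}(X)$ and $v_t=(1-t)v_0+tv_1$, coercivity of ${\bf D}_{v_t}$ follows by taking the convex combination of the coercivity inequalities for ${\bf D}_{v_0}$ and ${\bf D}_{v_1}$ with the same distance-to-orbit functional $\inf_{g\in\T_\C} d_1(g\cdot\varphi,\cdot)$ on the right-hand side; the constants $\eps,C$ for $v_t$ can be taken to be the convex combinations (or the minimum, resp. maximum) of those for $v_0,v_1$. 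Here one must be slightly careful that the normalizing constants (e.g. $\sup v$, $\inf v$, total mass $\int_{\Pol_X}v\,d\mu_{\rm DH}$) entering the definition of coercivity vary continuously and are controlled along the segment, which they are since $v\mapsto v$ is linear and everything in sight is a continuous functional of $v$ in $C^0$.

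The main obstacle — and the heart of the theorem — is openness. The strategy is to fix $v_0\in\mathcal{D}(X)$ with coercivity constants $\eps_0>0, C_0$, and to estimate ${\bf D}_v-{\bf D}_{v_0}$ uniformly over all $\T$-invariant potentials $\varphi$ normalized by $\sup(\varphi-\varphi_0)=0$ (or $I$-normalized) when $\|v-v_0\|_{C^0(\Pol_X)}$ is small. Since $\mu_\omega(X)\subseteq\Pol_X$ for every K\"ahler metric $\omega\in2\pi c_1(X)$, one has the pointwise bound $|v(\mu_\omega)-v_0(\mu_\omega)|\le\|v-v_0\|_{C^0(\Pol_X)}$, which transfers to a uniform bound on the linear-in-$v$ part: $|{\bf E}_v(\varphi)-{\bf E}_{v_0}(\varphi)|\le C\,\|v-v_0\|_{C^0}$ for a constant $C$ depending only on the fixed geometric data (the total mass $\int_X \omega_0^n$, etc.), \emph{not} on $\varphi$. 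The $-\log\int_X e^{-\varphi}$ term is independent of $v$, so it cancels exactly. Therefore ${\bf D}_v(\varphi)\ge {\bf D}_{v_0}(\varphi)-C\|v-v_0\|_{C^0}\ge \eps_0\,\big(\inf_{g\in\T_\C}d_1(g\cdot\varphi_0,\varphi)\big)-C_0 - C\|v-v_0\|_{C^0}$, which is again a coercivity estimate with the same $\eps_0$ and the slightly worse constant $C_0+C\|v-v_0\|_{C^0}$. Hence the whole $C^0$-ball of radius determined only by the admissibility of $v$ as a positive function (i.e. $\|v-v_0\|_{C^0}<\inf_{\Pol_X}v_0$, ensuring $v>0$) lies in $\mathcal{D}(X)$. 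The one genuine subtlety to check is that the precise notion of $\T_\C$-coercivity in Definition \ref{d:v-Ding-coersive} is formulated with a normalization (e.g. modding out by constants, or by the $\T_\C$-action, and possibly involving $\sup v$ or the $v$-weighted volume as a prefactor) for which this additive perturbation argument goes through cleanly; if the definition carries a multiplicative $v$-dependent normalization one first renormalizes $v$ to have fixed $v$-weighted total mass, using the cone property from step (ii), and then runs the additive estimate. Combining (i), the cone property, convexity, and openness gives that $\mathcal{D}(X)$ is an open convex subcone of $\mathcal{F}(X)$.
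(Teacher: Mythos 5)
Your steps (i)--(iii) are essentially sound: in Definition~\ref{d:v-Ding-coersive} the $\T_\C$-invariance (equivalently $\Fut_v\equiv 0$, by Lemma~\ref{TC-invariance}) is built into coercivity, so $\mathcal{D}(X)\subseteq\mathcal{F}(X)$ is nearly definitional; and since ${\bf D}_{\lambda v}={\bf D}_v$ exactly (only $\mathring v=v/\vol_v(X)$ enters), your caveat about first renormalizing before taking convex combinations is precisely the right fix for convexity and matches what the paper does. The genuine gap is in the openness step, which is the heart of the theorem.

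The claim that $|{\bf E}_v(\varphi)-{\bf E}_{v_0}(\varphi)|\le C\|v-v_0\|_{C^0(\Pol_X)}$ with $C$ independent of $\varphi$ is false. The $v$-dependent part of ${\bf D}_v$ is $-{\bf I}_{\mathring v}(\varphi)$ with $d_\varphi{\bf I}_w(\dot\varphi)=\int_X w(\mu_{\omega_\varphi})\dot\varphi\,\omega_\varphi^{[n]}$, so by linearity in the weight
\[
\big({\bf D}_v-{\bf D}_{v_0}\big)(\omega_\varphi)=-{\bf I}_{\mathring v-\mathring v_0}(\varphi)={\bf J}_{\mathring v-\mathring v_0}(\varphi)-\int_X\varphi\,(\mathring v-\mathring v_0)(\mu_{\omega_0})\,\omega_0^{[n]}.
\]
For sup-normalized $\varphi$ the last integral is $O(1)$ (this is \cite[Lemma 13]{HL}, i.e. Lemma~\ref{l:comparison}), but the weighted Aubin term is only squeezed between $\inf_{\Pol_X}(\mathring v-\mathring v_0)\cdot{\bf J}(\omega_\varphi)$ and $\sup_{\Pol_X}(\mathring v-\mathring v_0)\cdot{\bf J}(\omega_\varphi)$, and ${\bf J}$ is unbounded on $\mathcal{H}^{\T}_{\omega_0}(X)$; the pointwise bound $|v(\mu_\omega)-v_0(\mu_\omega)|\le\|v-v_0\|_{C^0}$ does not integrate to a $\varphi$-independent bound because the measure $\omega_\varphi^{[n]}$ and the potential $\varphi$ both degenerate. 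A sanity check that your conclusion is too strong: coercivity with \emph{unchanged} slope $\eps_0$ on every ball of radius $\inf_{\Pol_X}v_0$ would, by chaining balls along the segment from $v_0$ to an arbitrary $v\in\mathcal{F}(X)$, force $\mathcal{D}(X)=\mathcal{F}(X)$ whenever $\mathcal{D}(X)\neq\emptyset$, contradicting the examples with $\mathcal{S}(X)\subsetneq\mathcal{F}(X)$ cited in the introduction. What is actually true (Proposition~\ref{p:comparison}) is the one-sided bound $({\bf D}_v-{\bf D}_{v_0})(\omega_\varphi)\ge-\lambda_0{\bf J}(\omega_\varphi)+C$ with $\lambda_0=-\inf_{\Pol_X}(\mathring v-\mathring v_0)$, which rests on the positivity ${\bf J}_w\ge 0$ for $w\ge 0$ applied to $w=\mathring v-\mathring v_0+\lambda_0$. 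One then evaluates at the minimizer $\omega^*_\varphi$ of ${\bf J}$ over the $\T_\C$-orbit (using $\T_\C$-invariance of both Ding functionals) and absorbs the loss $\lambda_0\inf_{\sigma\in\T_\C}{\bf J}(\sigma^*\omega_\varphi)$ into the coercivity of ${\bf D}_{v_0}$; this requires $\lambda_0<\Lambda_0$ and degrades the slope to $\Lambda_0-\lambda_0$ (Corollary~\ref{coroQuantitative}). This quantitative loss of slope is the real mechanism of the proof and is absent from your argument.
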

By \cite[Theorem~1.7]{HL},  $\mathcal{S}(X)=\mathcal{D}(X)$. Thus,  Theorem~\ref{thm:openedness} yields an effective estimate for the radius of an open ball in $\mathcal{S}(X)$ around a given $v_0\in \mathcal{S}(X)$, expressed in terms of the coercivity  slope of the $v_0$-weighed Ding functional, see Corollary \ref{coroQuantitative}. Our main geometric applications below use the fact that $\mathcal{S}(X)$ is relatively open in $\mathcal{F}(X)$; to obtain these results, instead of Theorem~\ref{thm:openedness} and \cite{HL},  one can merely use a LeBrun--Simanca~\cite{LS} type perturbation argument as in \cite{lahdili}, which yields the relative opennes of finite dimensional smooth families of weights in $\mathcal{S}(X)$. We present such a perturbation result adapted to our context in Appendix~\ref{a:LeBrun-Simanca}.  

{
\begin{rem} In a somewhat similar vein,  on a given K\"ahler cone $Y$ endowed with a maximal compact torus $\hat \T\subset \Aut(Y)$,  the authors study in \cite{BHLTF} the cone $\hat \tor^{\rm ext} \subset {\rm Lie}(\hat \T)$ of Sasaki--Reeb vector fields $\hat{\xi}$  such that $Y$  admits  an extremal Sasaki metric polarized by $\hat \xi$. Translated to the weighted K\"ahler terminology (see \cite{AC, ACL, lahdili}), this is equivalent to study on a given quasi-regular quotient $(X,L, \T)$ (endowed with a momentum polytope $\Pol_{L} \subset \tor^*$) the set $\tor^{\rm ext}$ of positive affine-linear functions $\ell$ on $\Pol_{L}$ such that $X$ admits a $\T$-invariant  $(\ell^{-n-1}, \ell_{\rm ext}\ell^{-n-3})$-cscK metric in $2\pi c_1(L)$. Here, $\ell$ is arbitrary positive affine linear function on $\Pol_L$ and $\ell_{\rm ext}$ is uniquely determined by $\ell$. In this setup,  it is shown in \cite{BHLTF}  that $\tor^{\rm ext}$ is not convex in general, which contrasts with Theorem~\ref{thm:openedness}.   
\end{rem}
}

\bigskip
We now discuss geometric applications of Theorem~\ref{thm:openedness}.  To this end, we consider the Sasaki geometry corresponding to the canonical bundle $K_X$ of a Fano manifold $X$.  It was shown in \cite{AJL} that the existence of a Sasaki--Einstein structure (or equivalently, the existence of a Ricci-flat K\"ahler cone structure on the affine cone $K_X^{\times}$) is equivalent to the existence of a $v$-soliton in $(X, 2\pi_1 c_1(X))$ for  a weight function $v=\ell^{-(n+2)}$ where $\ell(x)$ is a positive affine-linear function on $\Pol_X$.  

Thus,  Problem~\ref{main-problem} incorporates as a special case answering the following Conjecture due to Mabuchi and Nakagawa:
\begin{conj}\cite{MN}\label{MN-conjecture} If the smooth Fano manifold $X$ admits a KRS, then its canonical cone $Y:= K_X^\times$ admits a Calabi--Yau cone structure.
\end{conj}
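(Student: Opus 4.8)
The plan is to recast both hypothesis and conclusion as membership statements in $\mathcal{S}(X)=\mathcal{D}(X)$ and then to connect them by a continuity argument whose open direction is supplied by Theorem~\ref{thm:openedness}. By \cite{TZ1} the hypothesis that $X$ carries a KRS means precisely that $e^{\ell_0}\in\mathcal{S}(X)$, where $\ell_0$ is the affine-linear function on $\Pol_X$ attached to the soliton vector field; in particular $e^{\ell_0}\in\mathcal{F}(X)$, the vanishing of the $e^{\ell_0}$-Futaki invariant being built into the soliton identity ${\rm Ric}(\omega)-\omega=\tfrac12 dd^c\ell_0(\mu_\omega)$. By \cite{AJL} the sought Calabi--Yau cone structure on $Y=K_X^\times$ is equivalent to the existence of a $v_1$-soliton with $v_1=\ell_1^{-(n+2)}$ for a suitable positive affine-linear $\ell_1$ on $\Pol_X$. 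The first step is to pin down $\ell_1$: I would select it by the Martelli--Sparks--Yau volume-minimization procedure \cite{MSY}, which singles out the critical Reeb direction and forces $\Fut_{v_1}\equiv 0$, i.e. $v_1\in\mathcal{F}(X)$. Thus both endpoints lie in the closed convex cone $\mathcal{F}(X)$, and the conjecture reduces to proving $\ell_1^{-(n+2)}\in\mathcal{S}(X)$.

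Next I would join the two weights by the straight segment $v_t=(1-t)\,e^{\ell_0}+t\,\ell_1^{-(n+2)}$, $t\in[0,1]$. Since $\mathcal{F}(X)$ is a convex cone, $v_t\in\mathcal{F}(X)$ throughout, and one runs the continuity method on $I:=\{t\in[0,1]:v_t\in\mathcal{S}(X)\}$. Because $v_t$ is affine in $t$ and $\mathcal{D}(X)=\mathcal{S}(X)$ is convex, $I$ is an interval; it is relatively open by Theorem~\ref{thm:openedness} together with the identification $\mathcal{S}(X)=\mathcal{D}(X)$ of \cite[Theorem~1.7]{HL}, and it contains $0$ by hypothesis. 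Hence $I=[0,t_\ast)$ or $[0,t_\ast]$ with $t_\ast=\sup I$, and everything rests on the closed direction.

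The hard part is exactly this closedness: showing that if $v_t\in\mathcal{S}(X)$ for all $t<t_\ast$ then $v_{t_\ast}\in\mathcal{S}(X)$, i.e. that the $\T_\C$-coercivity slope of ${\bf D}_{v_t}$ stays bounded away from $0$ as $t\to t_\ast$, so that necessarily $t_\ast=1$ and the supremum is attained. This is emphatically not a perturbative phenomenon: with the exponent pinned at $n+2$ the weight $\ell_1^{-(n+2)}$ is in no sense $C^0$-close to the exponential weight $e^{\ell_0}$, so Theorem~\ref{thm:openedness} alone cannot carry $I$ from $0$ to $1$. I would attack it by seeking a uniform lower bound for the weighted coercivity slope along the family---equivalently a uniform positive margin in the weighted Ding/$\delta$-invariant---by comparing the weighted non-Archimedean Ding invariants ${\bf D}^{\rm NA}_{v_t}$ over the common space of $\T_\C$-equivariant test configurations \cite{HL,CSz} and tracking their dependence on the weight $v_t$. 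The anticipated obstacle is that no such uniform bound is available for a general Fano $X$ once the exponent is fixed at $n+2$: a destabilizing $\T_\C$-equivariant test configuration could surface at some $t_\ast<1$. This is precisely the difficulty that is bypassed by stabilizing with a factor $\mathbb{P}^k$, where the canonical-cone weight becomes $(\text{affine})^{-(n+k+2)}$ and, suitably normalized, converges in $C^0$ to $e^{\ell_0}$ as $k\to\infty$; for $k$ large it then lands inside the openness neighborhood of the KRS weight provided by Theorem~\ref{thm:openedness}, so that closedness is no longer needed and one obtains the asymptotic form of Conjecture~\ref{MN-conjecture}.
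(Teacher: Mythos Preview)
The statement you are trying to prove is Conjecture~\ref{MN-conjecture}, which the paper does \emph{not} prove. The paper only establishes the asymptotic version, Theorem~\ref{thm:A}, where one is allowed to replace $X$ by $X\times\mathbb{P}^k_{\C}$ for $k$ sufficiently large; the original conjecture for $X$ itself remains open (and, as the paper notes, is now known to fail for Fano orbifolds by \cite{HHS}). So there is no ``paper's own proof'' to compare against.

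Your proposal correctly identifies the structural difficulty: openness of $I$ follows from Theorem~\ref{thm:openedness}, but you have no argument for closedness. The suggestion to track a uniform lower bound on the coercivity slope along the segment $v_t$ is exactly the missing ingredient, and nothing in the paper (or in \cite{HL,CSz}) supplies such a bound when the exponent is fixed at $n+2$. Your final paragraph is in fact a sketch of the paper's proof of Theorem~\ref{thm:A}, not of the conjecture: by passing to $X\times\mathbb{P}^k_{\C}$ one replaces the target weight by $\ell^{-(n+k+2)}$, which after the right normalization converges in $C^0(\Pol_X)$ to the KRS weight $e^{\langle\tau,x\rangle}$ as $k\to\infty$, so Theorem~\ref{thm:openedness} alone suffices and closedness is never needed. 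In short, your write-up is an honest outline of why the conjecture is hard and how the paper sidesteps it, but it is not a proof of Conjecture~\ref{MN-conjecture}.
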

We note that the recent work~\cite{HHS} shows that the above conjecture generally fails for Fano orbifolds. 

\bigskip Using Theorem~\ref{thm:openedness}, we make the following  observation related to Conjecture~\ref{MN-conjecture}.
\begin{thm}\label{thm:A} Suppose $X$ is a smooth Fano manifold which admits a K\"ahler--Ricci soliton. Then, there exists a non-negative integer $k_0$, such that for any $k\ge k_0$, the canonical cone $K^{\times}_Z$ of the Fano manifold 
\[Z:=X \times {\mathbb P}^k_{\C} \]
admits a Ricci-flat K\"ahler cone metric.
\end{thm}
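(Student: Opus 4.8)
The plan is to reduce Theorem~\ref{thm:A} to Theorem~\ref{thm:openedness} by exhibiting, on the product $Z = X\times\mathbb P^k_\C$, a single weight function that lies in $\mathcal S(Z)$ and then perturbing it within $\mathcal F(Z)$ to reach the weight $\ell^{-(n_Z+2)}$ which, by \cite{AJL}, corresponds to a Calabi--Yau cone structure on $K_Z^\times$. Concretely, let $\T_X\subset\Aut(X)$ be a maximal compact torus and $\omega_{\rm KRS}$ the given K\"ahler--Ricci soliton; by Tian--Zhu this is an $e^{\ell_X}$-soliton for an affine-linear function $\ell_X$ on $\Pol_X$ determined by the soliton vector field $\xi_X$. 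On $Z$, take the torus $\T_Z = \T_X\times \T_{\mathbb P^k}$, where $\T_{\mathbb P^k}$ is the standard maximal torus of $\Aut(\mathbb P^k)$, so that $\Pol_Z = \Pol_X\times\Pol_{\mathbb P^k}$ (with $K_Z^{-1}=K_X^{-1}\boxtimes K_{\mathbb P^k}^{-1}$ and $2\pi c_1(Z)$ decomposing accordingly). The first step is to observe that the product K\"ahler metric $\omega_{\rm KRS}\oplus \omega_{\rm FS}$, where $\omega_{\rm FS}\in 2\pi c_1(\mathbb P^k)$ is the Fubini--Study metric (which is itself a K\"ahler--Einstein metric, i.e. an $e^{\rm const}$-soliton with trivial vector field on $\mathbb P^k$), is a $v_0$-soliton on $(Z,\T_Z,2\pi c_1(Z))$ for the weight $v_0(x_X,x_{\mathbb P^k}) = e^{\ell_X(x_X)}$, i.e. a weight pulled back from the $X$-factor. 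This is a direct check: the Ricci form of a product is the sum of the Ricci forms, $\omega$ is the sum, the momentum map is the product map, and $dd^c\log v_0(\mu_\omega) = dd^c(\ell_X(\mu_{\omega_X}))$ lives on the $X$-factor and matches ${\rm Ric}(\omega_{\rm KRS})-\omega_{\rm KRS}$ there while the $\mathbb P^k$-part vanishes on both sides.

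The second step identifies the target weight. By \cite{AJL}, a Ricci-flat K\"ahler cone metric on $K_Z^\times$ exists iff $(Z,\T_Z,2\pi c_1(Z))$ carries a $w$-soliton for $w = \ell_Z^{-(n_Z+2)}$, where $n_Z = \dim_\C Z = n + k$ and $\ell_Z$ is a suitable positive affine-linear function on $\Pol_Z$ (the restriction of the linear functional encoding the Sasaki--Reeb vector field, normalized by the Calabi--Yau condition). A priori $\ell_Z$ is one specific affine function, but since both $v_0$ and $\ell_Z^{-(n_Z+2)}$ lie in the closed convex cone $\mathcal F(Z)$ (for $v_0$ this is automatic because it is an actual soliton weight, hence in $\mathcal S(Z)\subset\mathcal F(Z)$; for $\ell_Z^{-(n_Z+2)}$ this is the defining Futaki-vanishing property of the Calabi--Yau normalization), and since $\mathcal D(Z)=\mathcal S(Z)$ is a relatively open convex cone in $\mathcal F(Z)$ by Theorem~\ref{thm:openedness}, it suffices to show that as $k\to\infty$ the weight $\ell_Z^{-(n_Z+2)}$ becomes $C^0(\Pol_Z)$-close (after rescaling, since $\mathcal F(Z)$ is a cone) to $v_0$ — or more precisely, close to \emph{some} element of the relatively open set $\mathcal S(Z)$. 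This is the heart of the argument and where the condition ``$k$ sufficiently large'' enters.

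The mechanism I expect behind the convergence is the following. On $\mathbb P^k$ with its Fubini--Study polytope (a simplex), the function $\ell_Z$ restricted to the $\mathbb P^k$-factor is, up to normalization, close to a constant plus a term of order $1/k$ on the bulk of the polytope, because the Calabi--Yau/volume-minimization normalization of the Reeb field on $K_Z^\times$ forces $\ell_Z$ to be nearly the barycentric/radial function, and the Duistermaat--Heckman measure on the $\mathbb P^k$-simplex concentrates; raising to the power $-(n_Z+2) = -(n+k+2)$ and renormalizing then kills the $\mathbb P^k$-dependence in the $C^0$-limit while on the $X$-factor $\ell_Z^{-(n+k+2)}$ should, after extracting the dominant exponential behaviour in $k$, converge to $e^{c\,\ell_X}$ for an appropriate constant $c>0$ — and by the cone property of $\mathcal F(Z)$ we may absorb $c$, landing in the $C^0$-ball around $v_0\in\mathcal S(Z)$ provided by Theorem~\ref{thm:openedness}. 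Thus for $k\ge k_0$ we get $\ell_Z^{-(n_Z+2)}\in\mathcal S(Z)$, i.e. a $v$-soliton, i.e. a Calabi--Yau cone on $K_Z^\times$.

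The main obstacle is the last step: making precise the asymptotics of the normalized affine function $\ell_Z$ and of the renormalized power $\ell_Z^{-(n_Z+2)}$ on the product polytope $\Pol_X\times\Pol_{\mathbb P^k}$ as $k\to\infty$, uniformly in $C^0$. This requires (i) controlling the Calabi--Yau normalization, i.e. solving or estimating the volume-minimization problem that pins down $\ell_Z$ — here one can use that on a product the volume functional of \cite{MSY} factorizes and that the $\mathbb P^k$-factor contributes an explicitly computable (Gamma-function/simplex-volume) term whose minimizer is known; and (ii) a Laplace-type estimate showing $(\text{const}+\ell_X/k + \text{h.o.t.})^{-(n+k+2)}\to e^{-\ell_X}$ up to scale, uniformly on the compact polytope $\Pol_X$, together with uniform positivity of $\ell_Z$ so that no blow-up occurs near the facets. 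Once these asymptotics are in hand, invoking Theorem~\ref{thm:openedness} (or, if one prefers, the finite-dimensional LeBrun--Simanca perturbation of Appendix~\ref{a:LeBrun-Simanca} applied to the one-parameter family interpolating $v_0$ and the rescaled $\ell_Z^{-(n_Z+2)}$) closes the proof.
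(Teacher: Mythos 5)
The overall circle of ideas is right — identify the Calabi--Yau cone weight on $Z$ as $\ell_Z^{-(n_Z+2)}$, note that the product metric $\omega_{\rm KRS}\oplus\omega_{\rm FS}$ is a $v_0$-soliton on $Z$ for $v_0=e^{\ell_X}$ pulled back from $X$, and connect the two weights by openness — and your first two steps are correct. But there is a genuine gap in how you deploy Theorem~\ref{thm:openedness}: you apply it on $Z=X\times\mathbb{P}^k_\C$, a manifold that \emph{changes with} $k$. The theorem (via Corollary~\ref{coroQuantitative}) gives a $C^0(\Pol_Z)$-ball around $v_0$ whose radius is controlled by the coercivity slope $\Lambda_0$ of ${\bf D}_{v_0}$ \emph{on $Z$}, and nothing prevents this slope from degenerating to $0$ as $k\to\infty$; meanwhile the distance from the rescaled $\ell_Z^{-(n_Z+2)}$ to $v_0$ only tends to $0$ in the same limit. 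Without a lower bound on the slopes that is uniform in $k$ — which you do not supply and which does not follow from anything in the paper — the two limits cannot be played off against each other. On top of this, the Laplace-type asymptotics for the MSY-normalized $\ell_Z$ (your self-declared ``main obstacle'') are left unproven, so the proposal is incomplete even granting uniformity.

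The paper's proof avoids both problems by performing the entire perturbation on the \emph{fixed} manifold $X$. It constructs affine-linear functions $\ell_N(x)=1-\langle x,\xi_N\rangle/N$ on $\Pol_X$, where $\xi_N$ is the minimizer of the explicit convex, proper functional $V_N(\xi)=\int_X\bigl(1-\langle\mu_\omega,\xi\rangle/N\bigr)^{-N+1}\omega^{[n]}$, so that $\Fut_{\ell_N^{-N}}\equiv 0$ on $X$; an implicit-function-theorem argument in the parameter $a=-1/N$ (applied to an analytic extension of $t\mapsto(1-t/N)^{-N+1}$ at $a=0$) shows $\xi_N\to\tau$, hence $\ell_N^{-N}\to e^{\langle\tau,x\rangle}$ in $C^0(\Pol_X)$. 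Openness on $X$ — where the coercivity slope is a single fixed constant — then yields an $\ell_N^{-N}$-soliton on $X$ for all $N\gg1$, and only afterwards does one take the product with the Fubini--Study metric on $\mathbb{P}^{N-n-2}_\C$: the product is automatically an $\ell_N^{-(\dim_\C Z+2)}$-soliton on $Z$ for a weight pulled back from the $X$-factor, which is precisely the Calabi--Yau cone weight by Remark~\ref{r:SE}. In other words, existence on $Z$ is obtained by an exact product construction, not by a perturbation on $Z$. If you want to salvage your route you should restructure it along these lines; as written, the reduction step fails.
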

Theorem~\ref{thm:A} is obtained along the following lines:  we  construct a sequence of  positive affine-linear functions $\ell_N(x)$  on $\Pol_X$,  satisfying that:  (1) $(\ell_N(x))^{-N}\in {\mathcal F}(X)$, and (2) $(\ell_N(x))^{-N}$ converges in $C^0(\Pol_X)$ to the weight $v=e^{\langle \tau, x\rangle}$  corresponding to the KRS. By Theorem~\ref{thm:openedness} and \cite[Theorem~1.7]{HL} (or equivalently by Corollary~\ref{c:Lebrun-Simanca}), for $N>>1$ there exists an $(\ell_N(x))^{-N}$-soliton on X, and hence an $(\ell_N)^{-({\rm dim}(Z) +2)}$-soliton on $Z=X \times {\mathbb P}_\C^{N-{\rm dim}(X) -2}$. The latter in turn defines a Calabi--Yau cone structure on $K^{\times}_Y$ by \cite[Prop.2]{AJL} (see also Remark~\ref{r:SE} below).

\smallskip
Our proof only shows that a sufficiently large $k_0$ as above exists,  but it does not yield a quantitative bound of $k_0$. Of course, Mabuchi--Nakagawa conjecture predicts that $k_0=0$. We also note that the $k$-dimensional complex projective space ${\mathbb P}^k_{\C}$ in Theorem~\ref{thm:A} can be replaced with any other $k$-dimensional K\"ahler--Einstein Fano manifold.

\bigskip
We next extend the geometric setup by considering, more generally, Sasaki structures on the unitary bundle $N$ in $K_X$,  which are transversal K\"ahler--Ricci solitons in the sense of Futaki--Ono--Wang~\cite{FOW}.  Equivalently, such Sasaki structures correspond to cone K\"ahler metrics on the canonical cone $K_X^{\times}$,  polarized by vector fields in the Lie algebra of $\hat\T=\T\times \Sph^1$, which give rise to transversal K\"ahler--Ricci solitons. We show in Corollary~\ref{c:v-TKRS} below that transversal KRS Sasaki structures on $N$ correspond to $v$-solitons on $X$ with $v=\ell_1^{-(n+2)}e^{\ell_2/\ell_1}$ for affine-linear functions $\ell_1>0, \ell_2$ on $\Pol_X$.  The relative openness of $\mathcal{S}(X)$ and the fact that $\ell_1$ ans $\ell_2$ are  determined by the corresponding Sasaki--Reeb vector field (see Lemma~\ref{lem:ExistFut0} below) allow us to recover a result of D.~Petrecca:
\begin{thm}\label{thm:main2}\cite{Patrecca}  Let $X$ be a smooth Fano manifold with canonical cone $K_{X}^{\times}$  and maximal compact torus $\hat \T \subset \Aut(K_X^{\times})$. Suppose $K_{X}^{\times}$ admits a compatible cone K\"ahler metric,  polarized by a Sasaki--Reeb vector field $\hat \xi_0 \in {\rm Lie}(\hat \T)$, which is a transversal K\"ahler Ricci soliton. Then, for any Sasaki--Reeb vector field $\hat \xi \in {\rm Lie}(\hat \T)$ which is sufficiently close to $\hat \xi_0$, $K_X^{\times}$ admits a compatible cone K\"ahler metric polarized by $\hat\xi$,  which is transversal K\"ahler--Ricci soliton.
\end{thm}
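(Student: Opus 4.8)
The plan is to reduce the statement to the relative openness of $\mathcal{S}(X)$ in $\mathcal{F}(X)$ via the weighted-geometry dictionary for the canonical cone. By Corollary~\ref{c:v-TKRS}, a compatible cone K\"ahler metric on $K_X^{\times}$ polarized by a Sasaki--Reeb vector field $\hat\xi\in{\rm Lie}(\hat\T)$ is a transversal K\"ahler--Ricci soliton if and only if $X$ admits a $v_{\hat\xi}$-soliton in $2\pi c_1(X)$ for the weight
\[ v_{\hat\xi}=\ell_1(\hat\xi)^{-(n+2)}\,e^{\ell_2(\hat\xi)/\ell_1(\hat\xi)}, \qquad n=\dim_{\C}X, \]
where $\ell_1(\hat\xi)>0$ and $\ell_2(\hat\xi)$ are the affine-linear functions on $\Pol_X$ attached to $\hat\xi$ by Lemma~\ref{lem:ExistFut0}: the function $\ell_1(\hat\xi)$ is read off linearly from $\hat\xi$, while $\ell_2(\hat\xi)$ is the transversal soliton potential, uniquely pinned down by the requirement $\Fut_{v_{\hat\xi}}\equiv 0$, i.e.\ $v_{\hat\xi}\in\mathcal{F}(X)$. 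In particular the hypothesis on $\hat\xi_0$ says precisely that $v_0:=v_{\hat\xi_0}\in\mathcal{S}(X)$.

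The next step is to observe that $\hat\xi\mapsto v_{\hat\xi}$ is continuous (indeed smooth) from a neighbourhood of $\hat\xi_0$ in the cone of Sasaki--Reeb vector fields into $\mathcal{F}(X)$ endowed with the relative $C^{0}(\Pol_X)$-topology. Indeed, $\hat\xi\mapsto\ell_1(\hat\xi)$ is linear; the positivity $\ell_1(\hat\xi)>0$ on the compact polytope $\Pol_X$ is an open condition, hence persists near $\hat\xi_0$; and $\hat\xi\mapsto\ell_2(\hat\xi)$ is continuous by Lemma~\ref{lem:ExistFut0}. Composing with the smooth map $(\ell_1,\ell_2)\mapsto\ell_1^{-(n+2)}e^{\ell_2/\ell_1}$ on the finite-dimensional space of affine functions yields the asserted continuity. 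Since $v_0\in\mathcal{S}(X)=\mathcal{D}(X)$ and, by Theorem~\ref{thm:openedness} together with \cite[Theorem~1.7]{HL}, $\mathcal{D}(X)$ is relatively open in $\mathcal{F}(X)$ in the $C^{0}(\Pol_X)$-topology, there is a neighbourhood of $\hat\xi_0$ on which $v_{\hat\xi}\in\mathcal{S}(X)$. Applying the converse implication in Corollary~\ref{c:v-TKRS}, the corresponding $v_{\hat\xi}$-soliton produces a compatible cone K\"ahler metric on $K_X^{\times}$, polarized by $\hat\xi$, which is a transversal K\"ahler--Ricci soliton. Alternatively, since the $v_{\hat\xi}$ range over a finite-dimensional smooth family lying inside $\mathcal{F}(X)$, one may bypass Theorem~\ref{thm:openedness} and \cite{HL} and instead invoke the LeBrun--Simanca type perturbation result of Appendix~\ref{a:LeBrun-Simanca} (Corollary~\ref{c:Lebrun-Simanca}).

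I expect the only point requiring genuine care is the passage from ``$\hat\xi$ close to $\hat\xi_0$'' to ``$v_{\hat\xi}$ close to $v_0$ in $C^{0}(\Pol_X)$'', that is, the continuity and single-valuedness of the soliton-potential assignment $\hat\xi\mapsto\ell_2(\hat\xi)$ (together with the persistence of $\ell_1(\hat\xi)>0$); this is exactly what Lemma~\ref{lem:ExistFut0} is designed to provide, the uniqueness of $\ell_2$ making the assignment well defined and the convexity/implicit-function argument behind it giving the required regularity. Everything else is a formal consequence of the dictionary in Corollary~\ref{c:v-TKRS} and the openness of $\mathcal{S}(X)$, so no new PDE analysis beyond what is already encapsulated in Theorem~\ref{thm:openedness} (or in the appendix) is needed.
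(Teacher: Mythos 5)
Your proposal is correct and follows essentially the same route as the paper: translate the transversal KRS condition for the polarization $\hat\xi$ into the existence of a $v_{\hat\xi}$-soliton on $X$ via Corollary~\ref{c:v-TKRS} (together with the identifications of Proposition~\ref{sasaki-identification} and Remark~\ref{r:sasaki-identification}), use Lemma~\ref{lem:ExistFut0} to see that $\hat\xi\mapsto v_{\hat\xi}$ is a well-defined smooth family in $\mathcal{F}(X)$, and conclude by the openness of $\mathcal{S}(X)=\mathcal{D}(X)$ from Theorem~\ref{thm:openedness} and \cite[Theorem~1.7]{HL} (or, alternatively, by Corollary~\ref{c:Lebrun-Simanca}). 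You also correctly isolate the one point needing care, namely the continuity of the soliton-vector assignment $\hat\xi\mapsto\tau_{\hat\xi}$, which is exactly what the strict convexity in Lemma~\ref{lem:ExistFut0} and the implicit function theorem provide.
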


\bigskip We finally obtain some apriori constraints for the weight functions $v\in \mathcal{S}(X)$.

First, we give a uniform upper bound for the weight
$v(x)=(\langle \xi, x\rangle +1)^{-(n+2)}\exp\left(\frac{\langle \tau_{\xi}, x\rangle}{1+\langle \xi, x\rangle}\right)$ corresponding to a Sasaki transversal K\"ahler--Ricci soliton. 
{ \begin{thm}\label{thm:positivity} Let $X$  be a smooth Fano manifold admitting a KRS with soliton vector field $\tau$, invariant under a maximal torus $\T \subset \Aut(X)$. Then, on the canonical momentum polytope $\Pol_X$, \[ \langle \tau, x \rangle  < n.\] More generally, if $\hat \xi \in {\rm Lie}(\hat \T)$ is a Sasaki--Reeb polarization of the cone $Y=K_X^{\times}$,  normalized by the identity 
${\mathcal L}_{-J\hat \xi} \Omega = \Omega$  where $\Omega$ is the Liouville holomorphic  volume form of $K_X$,  and if, moreover, $(Y, \hat \xi)$ admits  a compatible K\"ahler cone metric which is transversal KRS, then the projected vector field $\xi \in {\rm Lie}(\T)$ and the corresponding soliton vector field $\tau_{\xi}\in {\rm Lie}(\T)$ satisfy on $\Pol_X$
\[ \langle \tau_\xi, x \rangle  < n \left(\langle \xi, x \rangle +1\right).\]
\end{thm}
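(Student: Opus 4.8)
The plan is to reduce the general statement about a transversal KRS on the cone $Y = K_X^\times$ to the basic inequality $\langle \tau, x\rangle < n$ for an honest KRS on $X$, since by Corollary~\ref{c:v-TKRS} a transversal KRS polarized by $\hat\xi$ corresponds to a $v$-soliton on $X$ with $v = \ell_1^{-(n+2)} e^{\ell_2/\ell_1}$, where $\ell_1(x) = \langle\xi, x\rangle + 1 > 0$ and $\ell_2(x) = \langle\tau_\xi, x\rangle$ are the affine-linear data determined by $\hat\xi$. First I would establish the KRS case directly: on a Fano $X$ with KRS metric $\omega$ and soliton vector field $\tau$, the soliton equation reads $\Ric(\omega) - \omega = \tfrac12 dd^c\langle\tau, \mu_\omega\rangle$, so that the holomorphy potential $h := \langle\tau, \mu_\omega\rangle$ of the (real) holomorphic vector field $\tau$ satisfies the Monge–Ampère-type identity $\Delta_\omega h + h = $ (a constant fixed by the normalization of $\mu_\omega$, namely $\int_X h\, e^{h}\omega^n / \int_X e^h \omega^n$ in the Tian–Zhu normalization, but with the canonical momentum normalization the relevant constant is $n$). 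The key analytic input is the maximum principle applied to $h$: at a point where $h$ attains its maximum over $X$, one has $\Delta_\omega h \le 0$, hence $h \le n$ there; since $\mu_\omega(X) = \Pol_X$ this gives $\langle\tau, x\rangle \le n$ on $\Pol_X$, and strictness follows because equality in the maximum principle together with the soliton equation would force $\tau \equiv 0$ (so $X$ is KE, $h \equiv 0 < n$), contradicting the assumed nonvanishing — or more carefully, equality would force $h$ constant, again $h\equiv 0$.

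For the general cone statement, I would run the same argument on the transversal geometry of the Sasaki structure on $N \subset K_X$. A transversal KRS means the transversal Ricci form $\Ric^T$ and the transversal Kähler form $\omega^T$ (in the basic cohomology class, suitably normalized so that $\Ric^T$ represents $(n+1)$ times the transversal Kähler class of the Einstein-normalized metric — this is exactly where the factor $\langle\xi,x\rangle + 1$ and the normalization ${\mathcal L}_{-J\hat\xi}\Omega = \Omega$ enter) satisfy $\Ric^T - (\text{const})\,\omega^T = \tfrac12 dd^c_B \Phi$ for the transversal holomorphy potential $\Phi$ of the transversally-holomorphic projected soliton vector field. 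Translating through the weighted correspondence, $\Phi$ is the function on $X$ whose value at $x \in \Pol_X$ is $\langle\tau_\xi, x\rangle/(\langle\xi,x\rangle + 1) = \ell_2/\ell_1$, and the transversal maximum principle gives $\Phi \le n$ pointwise, i.e. $\langle\tau_\xi, x\rangle \le n(\langle\xi,x\rangle + 1)$ on $\Pol_X$; strictness is handled as before by noting that equality would force $\tau_\xi$ to vanish and reduce to the (strictly smaller) Einstein value. The cleanest route is probably to phrase the maximum-principle step entirely in terms of the $v$-soliton equation $\Ric(\omega) - \omega = \tfrac12 dd^c \log v(\mu_\omega)$ with $v = \ell_1^{-(n+2)} e^{\ell_2/\ell_1}$: then $\log v(\mu_\omega) = -(n+2)\log\ell_1(\mu_\omega) + \ell_2(\mu_\omega)/\ell_1(\mu_\omega)$, and the trace of the soliton equation against $\omega$ gives a scalar identity relating $\Delta_\omega(\log v(\mu_\omega))$, the scalar curvature, and $n$, from which evaluating at the maximum of $\ell_2/\ell_1$ (observing that $\log\ell_1(\mu_\omega)$ is concave along the relevant directions, or handling its Laplacian term explicitly) yields the bound.

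The main obstacle I anticipate is bookkeeping the normalizations: getting the exact constant $n$ (rather than $n+1$, $n+2$, or $2n$) requires being careful about (i) the canonical normalization of the momentum map $\mu_\omega$ with image $\Pol_X$, (ii) the normalization ${\mathcal L}_{-J\hat\xi}\Omega = \Omega$ fixing the Reeb scaling on the cone, and (iii) the passage between the cone metric on $K_X^\times$ and the transversal Kähler metric on the $n$-dimensional transversal geometry versus the $n$-dimensional base $X$. A secondary technical point is justifying the maximum principle / strong maximum principle step for the Laplacian of $\ell_2(\mu_\omega)/\ell_1(\mu_\omega)$, since this is a nonlinear function of the momentum map and its Hessian contribution must be controlled; here one uses that $\ell_1, \ell_2$ are affine-linear so the momentum-map image of any holomorphic vector field is a genuine holomorphy potential, and $dd^c$ of the composite can be computed via the Guillemin–Abreu formalism (or simply via $dd^c(f\circ\mu_\omega)$ for $f$ smooth on $\Pol_X$), reducing everything to pointwise linear algebra on $\Pol_X$ plus the curvature term from the soliton equation, which has a sign at the extremum.
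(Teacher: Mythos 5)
Your overall architecture (reduce the cone statement to a scalar inequality for the transversal soliton potential and evaluate at an extremum of $\ell_2/\ell_1$ composed with the momentum map) is the same as the paper's, but the analytic core of your argument has a genuine gap. The identity you invoke, $\Delta_\omega h + h = \mathrm{const}$, is not what the soliton equation gives. Tracing $\Ric(\omega)-\omega=\tfrac12 dd^c h$ yields $\Scal(\omega)=2n-\Delta_\omega h$, while the canonical normalization of $\mu_\omega$ (equivalently, \eqref{eq:comparingMOMENTS} contracted with $\tau$) yields the \emph{nonlinear} identity $\Delta_\omega h = 2h + |dh|^2_\omega$; there is no linear eigenvalue-type identity for $h$. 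Worse, the direction of your maximum-principle step comes out wrong: at a maximum of $h$ the second identity reduces to $\Delta_\omega h = 2h$, and the sign of the Laplacian at a maximum then forces $h\ge 0$ there --- an inequality in the opposite direction to the upper bound $h<n$ you need. So the maximum principle applied directly to $h$ cannot produce the theorem.

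What actually closes the argument --- and what your proposal is missing --- is the strict positivity of the (transversal) scalar curvature of a compact (transversal) KRS. Combining the two identities above gives $\Scal(\omega)=2n-2h-|dh|^2_\omega$, so at a point where $h$ attains its global maximum one has $\Scal(\omega)=2n-2h$; the bound $h<n$ then follows from $\Scal(\omega)>0$, with strictness automatic. But $\Scal>0$ is itself a nontrivial input: the paper proves it (Corollary~\ref{c:scal>0}) by applying the strong maximum principle not to $h$ but to $\Scal$ itself, at a point of global minimum and on a local quotient of the Sasaki manifold, using the Petersen--Wylie identity $\tfrac12\Delta\Scal(\omega)-\langle d\Scal(\omega),dh\rangle_g+\Scal(\omega)=\|\Ric_g\|^2$ for gradient Ricci solitons (Lemma~\ref{l:identity-scal}). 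You gesture at ``the curvature term from the soliton equation, which has a sign at the extremum,'' but that sign is precisely the content of $\Scal>0$ and requires this separate argument. Your reduction of the cone case to the weight $v=\ell_1^{-(n+2)}e^{\ell_2/\ell_1}$ and your remarks on normalizations are consistent with the paper (which in fact runs the whole computation on the link $N=r^{-1}(1)$ with $f=\eta^{\hat\xi}(\tau_{\hat\xi})$ rather than descending to $X$), but as written the proof does not go through.
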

}
Second, we derive a weighted version of Fujita'a inequality~\cite{fujita}:
\begin{thm}\label{thm:fujita}Let $X$  be a smooth Fano manifold admitting a $v$-soliton $\omega$. Suppose without loss of generality that $v$ is normalized  so that
\[ \int_X v(\mu_{\omega}) \omega^{[n]} = \int_X \omega^{[n]}.\]
Then the first Chern number of $X$ satisfies
\[ c_1^n(X) \le \left(\frac{n+1}{\inf_{\substack{\Pol_X}} v}\right)^{n}. \]
\end{thm}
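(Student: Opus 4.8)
The plan is to adapt Fujita's argument for Kähler--Einstein manifolds to the weighted setting, replacing the usual alpha-invariant / Ding-functional computation with its $v$-weighted analogue. Recall that Fujita's bound $c_1^n(X)\le (n+1)^n$ for a Kähler--Einstein Fano comes from testing the Ding stability (or equivalently, estimating the Ding functional) against the degeneration to a point: blowing up the vertex of the canonical cone, or equivalently using the filtration of $H^0(X, K_X^{-m})$ by order of vanishing at a fixed point, one gets that the expected vanishing order of sections, suitably normalized, forces $\mathrm{Vol}(K_X^{-1})=c_1^n(X)$ to be at most $(n+1)^n$. In the $v$-soliton setting, the relevant functional is the $v$-weighted Ding functional ${\bf D}_v$, and the existence of a $v$-soliton gives (via \cite{HL} and Theorem~\ref{thm:openedness}) its $\T_\C$-coercivity, hence in particular $v$-Ding semistability on all $\T_\C$-equivariant test configurations. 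First I would fix a smooth point $p\in X$ (a $\T$-fixed point if one wishes to stay equivariant, but any point works after noting the test configuration can be taken trivially $\T$-equivariant by choosing $p$ generically, or by working with the deformation to the normal cone which carries the $\T$-action) and consider the test configuration given by deformation to the normal cone of $p$, i.e. $\mathcal{X}=\mathrm{Bl}_{p\times\{0\}}(X\times\C)$ with the natural $\C^*$-action on the last factor.

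Second I would compute the $v$-weighted Ding invariant $\mathrm{Fut}_v(\mathcal{X})$, or rather its leading-order behaviour as one rescales the polarization $-K_X - \epsilon\, E$ (with $E$ the exceptional divisor) toward the boundary $\epsilon \to$ the Seshadri-type threshold. The point is that for this particular degeneration the weight $v$ enters only through its values on $\Pol_X$ integrated against the Duistermaat--Heckman measure, and the worst-case term is governed by $\inf_{\Pol_X} v$: the ``energy'' part of the Ding functional, being an integral of $v(\mu_\omega)$ against a fixed measure and normalized as in the statement to have total mass $\int_X\omega^{[n]}$, is bounded below by $\inf_{\Pol_X} v$ times the unweighted energy, while the ``entropy''/log term is unaffected by $v$ (it is the log of the normalized volume of sections, independent of $v$). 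Semistability, $\mathrm{Fut}_v(\mathcal{X})\ge 0$, then reads
\[
\frac{1}{n+1}\,c_1^n(X) \;\le\; \frac{1}{\inf_{\Pol_X} v}\,\big(\text{unweighted Fujita term}\big),
\]
and plugging in the classical evaluation of the unweighted term (which is $\tfrac{1}{n+1}c_1^n(X)$ matched against the log-canonical threshold contribution bounded by $1$) yields $c_1^n(X)^{1/n}\le (n+1)/\inf_{\Pol_X} v$ after taking $n$-th powers, i.e. exactly the asserted inequality.

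The main obstacle I expect is making precise which normalization of the $v$-Ding functional is being used and checking that the weight $v$ factors through the momentum map so cleanly on this non-product test configuration: on $\mathrm{Bl}_{p}(X\times\C)$ the $\T$-momentum image is still $\Pol_X$ (the blow-up point is $\T$-fixed or the $\T$-action is trivial there), so the Duistermaat--Heckman measure of the central fibre's pieces still lives on $\Pol_X$ and $v$ is bounded below by $\inf_{\Pol_X} v$ throughout — but one must verify that the weighted volume/energy terms pick up precisely a factor pinched between $\inf_{\Pol_X} v$ and $\sup_{\Pol_X} v$, and that only the $\inf$ survives in the direction of the inequality we need. A secondary technical point is handling the general (non-quasiregular, irrational) weights $v$ by the usual approximation/continuity of both sides in the $C^0(\Pol_X)$-topology, which is legitimate since $\inf_{\Pol_X} v$ and $c_1^n(X)$ are continuous (the latter constant). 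Once the bookkeeping of the weighted Ding invariant on the deformation to the normal cone is in place, the inequality follows by the same extremal computation as in \cite{fujita}.
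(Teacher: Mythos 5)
Your proposal is correct in spirit and follows essentially the same strategy as the paper: blow up a ($\T$-fixed) point, use the (semi)stability supplied by the existence of the $v$-soliton, observe that the weight enters the relevant positivity only through a factor bounded below by $\inf_{\Pol_X} v$, and then invoke Fujita's unweighted volume inequality. The difference is one of packaging. You propose to compute the $v$-weighted Ding invariant of the deformation to the normal cone $\mathrm{Bl}_{p\times\{0\}}(X\times\C)$ directly, which is Fujita's original route; the paper instead uses the valuative reformulation from Han--Li, namely that the existence of a $v$-soliton forces $\beta_v(D)=A_X(D)\Vol_v(-K_X)-\int_0^\infty \Vol_v(-b^*K_X-x[D])\,dx\ge 0$ for the exceptional divisor $D$, with $A_X(D)=n$. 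The valuative version buys you a clean statement off the shelf (Han--Li, Theorems 1.7 and 5.18) and reduces the whole proof to the monotonicity $\Vol_v(-b^*K_X-x[D])\ge m_v\,\Vol(-b^*K_X-x[D])$ with $m_v=\min_{\Pol_X}v$, Fujita's bound $\Vol(-b^*K_X-x[D])\ge \Vol(-K_X)-x^n$, and an explicit integration over $0\le x\le \Vol(-K_X)^{1/n}$ giving $\Vol_v(-K_X)\ge \tfrac{m_v}{n+1}\Vol(-K_X)^{(n+1)/n}$, from which the normalization $\Vol_v(-K_X)=\Vol(-K_X)$ yields the claim. One point in your sketch needs care: the assertion that the ``entropy/log term is unaffected by $v$'' is not quite right as stated --- in the $\beta$-invariant the log-discrepancy term is $A_X(D)\Vol_v(-K_X)$, which is genuinely weighted, and it only becomes the unweighted volume $c_1^n(X)$ after the normalization hypothesis of the theorem is applied at the very end; if you instead tried to discard $v$ from that term before normalizing, the inequality could go the wrong way. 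With that bookkeeping fixed, and with Fujita's Theorem 2.3 cited explicitly for the unweighted volume estimate, your argument closes.
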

The above inequality was established in the K\"ahler--Einstein case (i.e. with $v \equiv 1$) in a series of works by Berman-Berndtsson~\cite{BB1, BB2} under some additional conditions, and by Fujita~\cite{fujita} in general. The latter proof uses the resolution of the YTD conjecture and a computation of the Donaldson--Futaki invariant of a certain family of test configurations. Our approach here is to adapt these computations to the weighted soliton case, using the (weighted) YTD correspondence and the notions of weighted beta invariant and weighted volume  from  Han-Li~\cite{HL,HL2}.  Notice that Theorem~\ref{thm:fujita} yields an apriori constraint for $v\in \mathcal{S}(X)$ on a given Fano manifold $X$.

\bigskip The paper is organized as follows. In Sect.2, we review the theory of KRS solitons and their generalization, the $v$-solitons. In Sect.3,  we establish Theorem~\ref{thm:openedness} as an application of the results in \cite{HL}. In Sect.4,  we introduce the point of view of Sasaki geometry and the corresponding K\"ahler cones. We recall here the definition of transversal KRS soliton on the canonical cone $K_X^{\times}$ of a smooth Fano variety, and show in Corollary~\ref{c:v-TKRS} that these structures give rise to special $v$-solitons on $X$. With this observation, we apply the openness Theorem~\ref{thm:openedness} to derive the proofs of Theorems~\ref{thm:A} and \ref{thm:main2}. In Sect. 5,  we  prove Theorem~\ref{thm:positivity} and show how it can be seen as a variant of the so-called \emph{Lichnerowicz obstruction}~\cite{gaunlett-etal} which is a necessary condition for the existence of Calabi--Yau cone metrics on $K_X^{\times}$. The final Sect. 5, we prove Theorem~\ref{thm:fujita}. In the Appendix~\ref{a:LeBrun-Simanca}, we recast in the setup of $v$-solitons studied in this paper a (weaker) openness result \`a la LeBrun--Simanca from \cite{lahdili}, which gives an alternative tool for obtaining our main geometric applications, Theorems~\ref{thm:A} and \ref{thm:main2} above. In the final Appendix~\ref{a:Curvature-Identities},  we gather some well-known curvature identities for gradient Ricci solitons and use them to observe, by a simple application of the maximum principle,  that the transversal scalar curvature of a compact Sasaki transversal KRS is positive. This is a key ingredient for our proof  of Theorem~\ref{thm:positivity}.

\section{Preliminaries} 
\subsection{Fano manifolds: notation and normalization}\label{s:notation}
In what follows, $X$ will denote a smooth compact complex manifold of complex dimension $n$, for which the anti-canonical bundle $K^{-1}_X$ is ample. Such an $X$ is called a \emph{smooth Fano variety}. The Fano condition implies that $X$ is projective, and that the de Rham class $\alpha=2\pi c_1(X)=2\pi c_1(K_X^{-1})$ contains K\"ahler metrics. 

Any K\"ahler metric $\omega \in \alpha$ is de Rham cohomologous with the corresponding Ricci form $\Ric(\omega) \in 2\pi c_1(X)$, and thus we can write in this case
\[ \Ric(\omega) - \omega = \frac{1}{2} dd^c h_{\omega},\]
for a smooth function $h_{\omega}$ which (by the maximum principle) is unique up to an additive constant. Such a function will be referred to as a \emph{Ricci potential} of $\omega$;
we can further fix the additive constant  
by requiring that
\begin{equation}\label{normalized-Ricci}\int_X e^{\mathring{h}_{\omega}} \omega^{[n]} = \int_X \omega^{[n]}=:\vol(X),\end{equation}
where $\omega^{[n]}:= \omega^n/n!$ stands for the Riemannian volume form of the K\"ahler metric $\omega$. We shall  then refer to this uniquely defined Ricci potential $\mathring{h}_{\omega}$ as the \emph{normalized Ricci potential} of $\omega$.
In these terms, the \emph{K\"ahler--Einstein} condition 
\begin{equation}\label{eq:KE}
\Ric(\omega)=\omega
\end{equation}
is equivalent to $\mathring{h}_{\omega}=0$.

\bigskip
We shall next fix once for all a maximal compact real torus $\T$ inside the connected component of identity $\Aut_\circ(X)$ of the group of complex automorphisms of $X$.  The corresponding complex torus will be denoted by $\T_{\C}$. There is a canonical lift (still denoted by $\T$) of the action of $\T$ on $X$ to  an action on the canonical bundle $K_X$. The latter bundle has a further ${\Sph}^1$-extension of the lifted  $\T$ action, given  by fibre-wise multiplications with complex numbers $e^{i \theta}\in \Sph^1$. We denote by $\hat \T= \T \times \Sph^1$  the resulting $({\rm dim}(\T) +1)$-dimensional torus acting on $K_X$.
We shall respectively denote by $\tor$ and $\hat \tor$ the Lie algebras of $\T$ and $\hat{\T}$. 

We consider the space
$\mathcal{K}_{\alpha}^{\T}(X)$ of $\T$-invariant K\"ahler metrics $\omega$ on $X$, belonging to $\alpha$; by  a standard averaging argument, $\mathcal{K}_{\alpha}^{\T}(X) \neq\emptyset$. Introducing a base-point $\omega_0\in \mathcal{K}_{\alpha}^{\T}(X)$, we will identify $\mathcal{K}_{\alpha}^{\T}(X)$ with the Fr\'echet space ${\mathcal H}^{\T}_{\omega_0}(X)/\R$, where 
\[\mathcal{H}^{\T}_{\omega_0}(X):= \left\{ \varphi \in C^{\infty}(X)^{\T} \, \, | \, \, \omega_{\varphi}:= \omega_0 + dd^c\varphi >0\right\}\]
is the space of smooth $\T$-invariant K\"ahler potentials with respect to $\omega_0$.

For each $\omega \in \mathcal{K}_{\alpha}^{\T}(X)$, we let $H_{\omega}$ denote the Hermitian metric on $K_X$ whose Chern curvature is
$R^{H_{\omega}}= -i \omega$,  and by $\nabla^{H_\omega}$ the $H_{\omega}$-Chern connection on $K_X$. A basic fact in the theory is that any lift of the $\T$ action on $K_X$ gives rise to a $\T$-momentum map $\mu_{\omega}$ for $(X, \T, \omega)$, defined as follows: for any $\xi \in \tor$ and  any smooth section $s\in C^{\infty}(X, -K_X)$,
\begin{equation}\label{moment-map} {\mathcal L}_{\xi} s = \nabla^{H_\omega}_{\xi} s -i \mu_{\omega}^{\xi} s,\end{equation}
where $\mu_{\omega}^{\xi}$ satisfies
\[ \omega(\xi, \cdot) = -d\mu_{\omega}^{\xi}\] see~\cite[Proposition 8.7.2]{PGnotes}.
Therefore, the canonical lift of $\T$  to $-K_X$ gives rise to a \emph{canonically normalized} momentum map $\mu_{\omega} : X \to \tor^{*}$ whose image $\Pol_X$ is a compact convex polytope~\cite{Atiyah, GS}; one can further show (see e.g.~\cite{BN,lahdili}) that $\mu_{\omega}(\Pol_X)$ is independent of the choice of $\omega\in \mathcal{K}^{\T}_{\alpha}(X)$.  In this paper, we shall refer to $\Pol_X$ as the \emph{canonical polytope} of $(X,\T)$.

\begin{rem}\label{r:canonical_normalization}  In general, a $\T$-momentum map $\mu_{\omega} : X \to \tor^*$ is defined only up to a translation with an element of $\tor^*$;  the fact that in the Fano case there is  a canonical normalization for $\mu_{\omega}$ follows from the existence of a canonical lift of the $\T$-action on $X$ to $K_X$. 
An alternative way to define the canonical normalization for $\mu_{\omega}$ (see e.g. \cite{AJL,TZ1}) is to require that  for any $\zeta\in \tor$, the function $\mu_{\omega}^{\zeta}:= \langle \mu_{\omega}, \zeta \rangle$ satisfies
\begin{equation}\label{normalized-mu}\int_X \mu_{\omega}^{\zeta} e^{h_{\omega}} \omega^{[n]} =0, \end{equation}
where $h_{\omega}$ is any Ricci potential of $\omega$.
\end{rem}
Once we have suitably normalized $\Pol_X$, we can define the \emph{Duistermaat-Heckman}
measure $d\mu_{\rm DH}$ on $\Pol_X$ as the push-forward via $\mu_{\omega}$ of the Riemannian measure of $(X, \omega)$: for any continuous function $f$ on $\Pol_X$, we let
\begin{equation}\label{DH}
\int_{\Pol_X} f(x) d\mu_{\rm DM} := \int_X f(\mu_{\omega}) \omega^{[n]}.\end{equation}
The fact that the LHS is independent of the choice of $\omega\in \mathcal{K}^{\T}_\alpha(X)$ follows for instance from the $\T$-equivariant Moser lemma (see e.g. \cite{FM}.)

\subsection{K\"ahler--Ricci solitons}\label{s:KRsoliton} Following \cite{TZ1}, a \emph{K\"ahler Ricci soliton} (KRS for short) is a K\"ahler metric $\omega \in 2\pi c_1(X)$ which satisfies 
\begin{equation}\label{KRS}
{\rm Ric}({\omega}) - \omega = -\frac{1}{2} \cL_{J\tau} \omega,
\end{equation}
where $\tau$ is a Killing vector field for the K\"ahler structure $\omega$. In the case $\tau=0$, \eqref{KRS} reduces to the K\"ahler--Einstein condition \eqref{eq:KE}. Tian--Zhu~\cite{TZ1} have extended the Matsushima's theorem~\cite{matsushima} to the case of a KRS, which in turn yields that any K\"ahler metric satisfying \eqref{KRS} must be invariant by the action of a maximal torus in $\Aut_\circ(X)$, containing the flow of $\tau$. Up to a pull-back by an element of $\Aut_0(X)$, we can and will assume that a KRS on $X$ belongs to $\mathcal{K}^{\T}_{\alpha}(X)$ and $\tau \in \tor$. Thus, similarly to the K\"ahler--Einstein case, the KRS condition can be rewritten as 
\begin{equation}\label{KRS-h}
{h}_{\omega} = \mu_{\omega}^{\tau}\end{equation} or, equivalently,  
\begin{equation}\label{KRS-gradient}
{\rm Ric}({\omega})  -\omega = \frac{1}{2} dd^c \mu_{\omega}^{\tau}, \qquad \tau \in \tor.
\end{equation}
By Remark~\ref{r:canonical_normalization}, \eqref{KRS-h} and \eqref{DH}, if $X$ admits a KRS in $\mathcal{K}^{\T}_\alpha(X)$, then for any $\zeta\in \tor$, we have
\[\int_{\Pol_X} \langle \zeta, x \rangle e^{\langle \tau, x\rangle}d\mu_{\rm DH}=0.\]
The above condition means that $\tau$ is a critical point of the function $F: \tor \to \R$: \[ F(\zeta) := \int_{\Pol_X} e^{\langle \zeta, x\rangle}d\mu_{\rm DH}.\] Tian--Zhu~\cite{TZ1} further show that $F$ admits a unique critical point,  independent of the existence of a KRS on $X$. We shall refer to this $\tau\in \tor$ as the \emph{KRS vector field} of $(X, \T)$ and to the positive smooth function $v(x):=e^{\langle \tau, x \rangle}$ on $\tor^*$ as the \emph{KRS weight function}.

\subsection{$v$-solitons} The notion of KRS extends to the following more general geometric situation, studied by Berman--Witt Nystr\"om in \cite{BN} and,  more recently,  by Han--Li in \cite{HL}. We follow the notation of \cite[Sect.2]{AJL}.
\begin{defn}[$v$-soliton] In the setup as above, let $v(x)$ be a given positive function defined on $\Pol_X$. A K\"ahler metric $\omega \in \mathcal{K}^{\T}_{\alpha}(X)$ is called a \emph{$v$-soliton} if it satisfies
\begin{equation}\label{v-soliton}\Ric(\omega)-\omega = \frac{1}{2}dd^c \log v(\mu_{\omega}).\end{equation}
\end{defn}
Clearly, K\"ahler--Einstein metrics are $1$-solitons whereas KRS are $v=e^{\langle \tau, x\rangle}$-solitons.
Notice that if $\omega$ is a $v$-soliton it is also a $\lambda v$-soliton for any $\lambda>0$. To read off this ambiguity,  we shall sometimes consider \emph{normalized} weight functions $\mathring{v}:= \frac{v}{\int_{\Pol_X} v d\mu_{\rm DH}}$,  i.e. 
\begin{equation}\label{v-normalization} \int_{\Pol_X} \mathring{v}(x) d\mu_{\rm DH} = 1.\end{equation}
We also notice that for any $v$-soliton, $h_{\omega}=\log(v(\mu_{\omega}))$,
the linear function
\begin{equation}\label{v-Futaki} \Fut_{v} : \tor \to \R, \qquad \Fut_v(\zeta):= \int_{\Pol_X} \langle \zeta, x\rangle v(x) d\mu_{\rm DH} \end{equation}
identically vanishes by Remark~\ref{r:canonical_normalization}.
\begin{defn}[$v$-Futaki invariant] The linear function defined by \eqref{v-Futaki} is called the \emph{$v$-Futaki invariant} of $(X, \T)$. 
\end{defn}
We next define a functional ${\bf I}_{v}$ on the 
space $\mathcal{H}^{\T}_{\omega_0}(X)$ of $\T$-invariant K\"ahler potentials (see \cite{lahdili,HL}:
\[ d_{\varphi} {\bf I}_v(\dot \varphi) = \int_X v(\mu_{\omega_{\varphi}}) \dot \varphi \omega_{\varphi}^{[n]}, \qquad {\bf I}_v(0)=0.\]
Following \cite{HL}, we introduce
\begin{defn}\label{d:v-Ding}[$v$-Ding functional] The $v$-Ding functional is the map
${\bf D}_v : {\mathcal H}^{\T}_{\omega_0}(X) \to \R$ given by
\[
{\bf D}_v (\varphi) := - \left(\frac{{\bf I}_v(\varphi)}{\vol_v(X)}\right)- \frac{1}{2}\log\left(\int_{X} e^{\mathring{h}_{\omega_0}-2\varphi}\frac{\omega_0^{[n]}}{\vol(X)}\right),
\]
where we have set $\vol_v(X):=\int_{\Pol_X} v(x) d\mu_{\rm DH}= \int_X v(\mu_{\omega})\omega^{[n]}$ and $\mathring{h}_{\omega_0}$ stands for the normalized Ricci potential of the base point $\omega_0$, see \eqref{normalized-Ricci}.
\end{defn}
Notice that ${\bf D}_{v}$ does not change if we add a constant to $\varphi$, so it actually descends to a functional, denoted ${\bf D}_v(\omega_{\varphi})$, on the space $\mathcal{H}^{\T}_{\omega_0}(X)/\R \cong \mathcal{K}^{\T}_{\alpha}(X)$.

It is not hard to see that the differential of ${\bf D}_v$ is given by
\begin{equation}\label{d-Dv}
(d_{\omega_\varphi}{\bf D}_v) (\dot\varphi) = \int_X \dot{\varphi} \left( \frac{e^{\mathring{h}_{\omega_\varphi}}}{\vol(X)}-\frac{v(\mu_{\omega_\varphi})}{\vol_{v}(X)}\right)\omega^{[n]}_\varphi,
\end{equation}
so that the critical points of ${\bf D}_v$ are precisely the K\"ahler metrics $\omega_{\varphi}$ for which
\[e^{\mathring{h}_{\omega_{\varphi}}} = \vol(X)\mathring{v}(\mu_{\omega_{\varphi}})\]
i.e. the $v$-solitons.

Another consequence of the formula \eqref{d-Dv} is the following
\begin{lemma}\label{TC-invariance} The $v$-Ding functional is $\T_{\C}$-invariant, i.e. satisfies 
\[{\bf D}_v(\sigma^*(\omega)) = {\bf D}_v(\omega) \qquad \forall \sigma \in \T_{\C},\] iff the $v$-Futaki invariant $\Fut_v \equiv 0$.
\end{lemma}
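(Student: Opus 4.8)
The plan is to use the $\T_\C$-action via the one-parameter subgroups generated by elements of $\tor$. Since $\T_\C$ is connected and generated (as a group) by the images of $\exp(t\zeta)$ and $\exp(tJ\zeta)$ for $\zeta\in\tor$, and since the $\T$-part of the action preserves $\omega$ up to a diffeomorphism isotopic to the identity (so does nothing to the $\T_\C$-invariant functional $\mathbf{D}_v$, which is already defined on $\T$-invariant metrics), it suffices to analyze the derivative of $t\mapsto \mathbf{D}_v(\sigma_t^*\omega)$ along the flow $\sigma_t$ of $J\zeta$. By the cocycle/chain rule, $\mathbf{D}_v$ is $\T_\C$-invariant if and only if this $t$-derivative vanishes identically for every $\zeta\in\tor$ and every $\omega\in\mathcal{K}^\T_\alpha(X)$.

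The key computation is to evaluate $\frac{d}{dt}\mathbf{D}_v(\sigma_t^*\omega)$ using \eqref{d-Dv}. Writing $\sigma_t^*\omega = \omega_{\varphi_t}$ for a path of potentials, one has $\dot\varphi_t = \mu_{\omega_{\varphi_t}}^\zeta$ up to an additive constant (this is the standard fact that the holomorphic flow of $J\zeta$ moves a Kähler potential by the momentum map; cf.\ the normalization \eqref{moment-map}), and since $\mathbf{D}_v$ is insensitive to additive constants in $\dot\varphi$ we may use $\dot\varphi_t = \mu_{\omega_{\varphi_t}}^\zeta$ exactly. Plugging this into \eqref{d-Dv} gives
\[
\frac{d}{dt}\mathbf{D}_v(\sigma_t^*\omega) = \int_X \mu_{\omega_{\varphi_t}}^\zeta\left(\frac{e^{\mathring{h}_{\omega_{\varphi_t}}}}{\vol(X)} - \frac{v(\mu_{\omega_{\varphi_t}})}{\vol_v(X)}\right)\omega_{\varphi_t}^{[n]}.
\]
The first term vanishes: $\int_X \mu_{\omega}^\zeta e^{\mathring h_\omega}\,\omega^{[n]} = 0$ by the canonical normalization \eqref{normalized-mu} (recall $\mathring h_\omega$ is a Ricci potential, so \eqref{normalized-mu} applies). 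The second term is, by the definition \eqref{DH} of the Duistermaat--Heckman measure,
\[
-\frac{1}{\vol_v(X)}\int_{\Pol_X}\langle\zeta,x\rangle\, v(x)\,d\mu_{\rm DH} = -\frac{\Fut_v(\zeta)}{\vol_v(X)}.
\]
Hence $\frac{d}{dt}\mathbf{D}_v(\sigma_t^*\omega) = -\Fut_v(\zeta)/\vol_v(X)$, which is a constant independent of $t$ and of $\omega$.

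This immediately gives both directions. If $\Fut_v\equiv 0$, then $t\mapsto\mathbf{D}_v(\sigma_t^*\omega)$ is constant along every such flow, so $\mathbf{D}_v$ is $\T_\C$-invariant. Conversely, if $\mathbf{D}_v$ is $\T_\C$-invariant, then for each $\zeta$ the function $t\mapsto\mathbf{D}_v(\sigma_t^*\omega)$ is constant, so its derivative $-\Fut_v(\zeta)/\vol_v(X)$ is zero; as $\vol_v(X)>0$, this forces $\Fut_v(\zeta)=0$ for all $\zeta\in\tor$, i.e.\ $\Fut_v\equiv 0$. The main point requiring care — and the only step that is not a one-line manipulation — is the identification $\dot\varphi_t = \mu^\zeta_{\omega_{\varphi_t}}$ (up to constant) for the flow of $J\zeta$, together with verifying the sign/normalization conventions there match those in \eqref{moment-map} and \eqref{normalized-mu} so that the two pieces of \eqref{d-Dv} land exactly as claimed; once the conventions are pinned down this is standard.
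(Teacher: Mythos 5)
Your proposal is correct and follows essentially the same route as the paper: differentiate $\mathbf{D}_v$ along the flow of $\pm J\zeta$ using \eqref{d-Dv}, kill the Ricci-potential term via the canonical normalization \eqref{normalized-mu}, and identify the remaining term with $-\Fut_v(\zeta)$ (up to the positive constant $\vol_v(X)$). Your spelled-out version of the concluding ``standard argument'' — that the derivative is a metric-independent constant, giving both implications at once — is exactly what the paper leaves implicit.
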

\begin{proof} ${\bf D}_v$ is clearly $\T$ invariant. For any $\zeta \in \tor$, we consider the flow of $-J\zeta$, say $\sigma_t\in \T_{\C}$,   and take the derivative at $t=0$ of ${\bf D}_v(\sigma_t^*(\omega))$. By \eqref{d-Dv} 
\[\frac{d}{dt}_{|_{t=0}}{\bf D}_v(\sigma_t^*(\omega))=\int_X \mu_{\omega}^{\zeta} \left( \frac{e^{\mathring{h}_{\omega}}}{\vol(X)}-\frac{v(\mu_{\omega})}{\vol_{v}(X)}\right)\omega^{[n]}= -\Fut_v(\zeta),\]
where we have used 
\eqref{normalized-mu} for the canonically normalized momentum map $\mu_{\omega}$. The claim follows from the above by a standard argument.
\end{proof}
We end-up this section with stating one of the main results of \cite{HL}, which gives an analytic criterion for the existence of a $v$-soliton on $(X, \T)$ in terms of ${\bf D}_v$. To state it, we recall the definition~\cite{Aubin} of the Aubin  functional ${\bf J} : \mathcal{H}^{\T}_{\omega_0}(X) \to \R$:
\[ {\bf J}(\varphi) := \int_X \varphi \omega_0^{[n]}-{\bf I}_1(\varphi), \]
which descends to ${\mathcal H}^{\T}(X)/\R$, and has the property that ${\bf J}(\omega_{\varphi})\ge 0$ with ${\bf J}(\omega_{\phi})=0$ iff $\omega_{\varphi}=\omega_0$. 
\begin{defn}\label{d:v-Ding-coersive} We say that ${\bf D}_v$ is coercive  relative to $\T_{\C}$ if it is $\T_\C$-invariant  and there are positive constants $\Lambda, C$ such that 
\begin{equation}\label{e:coersive}{\bf D}_{v}(\omega) \ge \Lambda \inf_{\sigma \in \T_{\C}} {\bf J}(\sigma^*(\omega)) - C. \end{equation}
The constant $\Lambda>0$ is called the slope of coercivity of ${\bf D}_v$.
\end{defn}
\begin{thm}\cite{HL}\label{thm:HL} Let $X$ be Fano manifold,  $\T\subset\Aut_0(X)$ a maximal compact torus with canonical momentum polytope $\Pol_X\subset \tor^*$. Let $v>0$ be a positive smooth function on $\Pol_X$. Then $X$ admits a $\T$-invariant $v$-soliton in $2\pi c_1(X)$ if and only if the $v$-weighted Ding functional ${\bf D}_v$ is  coercive with respect to the complex torus $\T_{\C}$. 
\end{thm}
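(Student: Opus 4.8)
The plan is to establish Theorem~\ref{thm:HL} by the variational method, adapting to the weighted setting the strategy used in the Kähler--Einstein case (Berman--Boucksom--Guedj--Zeriahi, Darvas--Rubinstein, Berman--Boucksom--Jonsson). The common starting point for both implications is formula~\eqref{d-Dv}, which identifies the critical points of ${\bf D}_v$ on $\mathcal{H}^{\T}_{\omega_0}(X)/\R$ with the $v$-solitons, together with Lemma~\ref{TC-invariance}: in both directions $\Fut_v\equiv 0$ (it is forced by the existence of a $v$-soliton, and it is built into the coercivity hypothesis through $\T_\C$-invariance), so ${\bf D}_v$ descends to the quotient of $\mathcal{K}^{\T}_\alpha(X)$ by the non-compact group $\T_\C$. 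The next common step is to extend ${\bf D}_v$ to the space $\mathcal{E}^1_\T(X,\omega_0)$ of $\T$-invariant $\omega_0$-psh functions of finite energy: the term $\int_X e^{\mathring{h}_{\omega_0}-2\varphi}\omega_0^{[n]}$ passes to the limit by monotone convergence, and ${\bf I}_v$ extends continuously along decreasing sequences because $v$ is bounded above and below on the fixed compact polytope $\Pol_X$, so the $v$-weighted Monge--Ampère energy is comparable to the ordinary one. On $\mathcal{E}^1_\T$ the functional ${\bf D}_v$ is $d_1$-upper semicontinuous and convex along the weak geodesics of Mabuchi--Darvas: convexity of the log-term is Berndtsson's direct-image positivity theorem, while convexity of $-{\bf I}_v$ is the weighted Monge--Ampère concavity statement of Berman--Witt Nystr\"om and Han--Li, which uses that $\mu_{\omega_\varphi}$ stays valued in $\Pol_X$ and evolves along the geodesic through the moment-map structure.

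For the sufficiency direction (coercivity $\Rightarrow$ existence), take a minimizing sequence $(\varphi_j)$ for ${\bf D}_v$ on the quotient. After acting by suitable $\sigma_j\in\T_\C$ and fixing additive constants, the coercivity estimate~\eqref{e:coersive} forces $(\sigma_j^{*}\omega_{\varphi_j})$ to lie in a $d_1$-bounded subset of $\mathcal{E}^1_\T$; Darvas's compactness then yields a $d_1$-limit $\varphi_\infty\in\mathcal{E}^1_\T$, which by lower semicontinuity is a global minimizer of ${\bf D}_v$. Its Euler--Lagrange equation is the complex Monge--Ampère equation
\[ \omega_{\varphi_\infty}^{\,n}=C\,\frac{e^{\mathring{h}_{\omega_0}-2\varphi_\infty}}{v(\mu_{\omega_{\varphi_\infty}})}\,\omega_0^{\,n}, \]
with $C>0$ fixed by integration. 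The remaining work is regularity: since $v(\mu_{\omega_{\varphi_\infty}})$ is automatically bounded above and below (its argument ranges in the fixed compact $\Pol_X$), the $L^\infty$-estimate of Ko\l odziej and Eyssidieux--Guedj--Zeriahi applies verbatim, and then, differentiating the equation and controlling $\mu_{\omega_\varphi}$ Hamiltonianly in terms of $\omega_\varphi$, one obtains the Laplacian estimate, followed by Evans--Krylov and Schauder. Hence $\varphi_\infty$ is smooth and is a genuine $v$-soliton.

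For the necessity direction (existence $\Rightarrow$ coercivity), let $\omega_*$ be a $v$-soliton and suppose, for contradiction, that ${\bf D}_v$ is not coercive relative to $\T_\C$: there are $\varphi_j$ with ${\bf D}_v(\varphi_j)$ bounded above while $\inf_{\sigma\in\T_\C}{\bf J}(\sigma^{*}\omega_{\varphi_j})\to\infty$. Normalizing and passing to a subsequence, one produces a unit-speed weak geodesic ray $(\varphi_t)_{t\ge 0}$ issuing from $\omega_*$; geodesic convexity together with the boundedness of ${\bf D}_v$ along points escaping to infinity forces the asymptotic slope ${\bf D}_v'(\infty)\le 0$. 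On the other hand, expressing ${\bf D}_v'(\infty)$ through the radial limits of $-{\bf I}_v$ and of the log-term --- a weighted non-Archimedean Ding functional --- and using that $\omega_*$ is a $v$-soliton and that $\Fut_v\equiv 0$, one shows ${\bf D}_v'(\infty)\ge 0$, with equality only if the ray is induced by an element of $\tor_\C$; but such a ray is excluded by the $\T_\C$-normalization, which made ${\bf J}$ blow up only along genuinely $\T_\C$-transverse directions. This contradiction proves coercivity.

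\textbf{The main obstacle.} For sufficiency, the delicate point is compactness modulo the non-compact group $\T_\C$: one must show that a minimizing sequence can be brought into a $d_1$-bounded region by acting with $\T_\C$, which is precisely where the relative coercivity hypothesis is genuinely used; the regularity bootstrap, complicated by the $\mu_{\omega_\varphi}$-dependence of the right-hand side, is a secondary difficulty handled as in \cite{BN,HL}. For necessity, the crux is the geodesic convexity of the weighted energy ${\bf I}_v$ and the precise evaluation of ${\bf D}_v'(\infty)$ along an arbitrary geodesic ray, together with the rigidity statement that a vanishing slope forces the ray to come from $\T_\C$; this ``strict convexity transverse to the torus'' is the heart of the argument, and it is there that the $v$-soliton equation enters most essentially.
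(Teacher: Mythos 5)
The paper does not actually prove Theorem~\ref{thm:HL}: it is imported verbatim from Han--Li \cite{HL}, so there is no in-paper argument to compare yours against. Your outline is a faithful roadmap of the variational proof given in \cite{HL} — finite-energy extension of ${\bf D}_v$, Berndtsson convexity of the log term together with concavity of the weighted Monge--Amp\`ere energy along weak geodesics, the Darvas--Rubinstein existence/properness principle relative to $\T_{\C}$, and regularity of the finite-energy minimizer — so the approach is the right one. Be aware, however, that as written it is a sketch rather than a proof: the steps you yourself flag as the main obstacles (compactness of a minimizing sequence modulo $\T_{\C}$; the bootstrap for an equation whose right-hand side $v(\mu_{\omega_{\varphi}})$ depends on first derivatives of the unknown, plus the integrability $e^{-2\varphi}\in L^p$, $p>1$, needed before Ko\l odziej's estimate applies; and above all the rigidity statement that a geodesic ray of vanishing Ding slope is induced by $\tor_{\C}$, which rests on a weighted Bando--Mabuchi uniqueness theorem) constitute essentially the entire technical content of \cite{HL} and are asserted here rather than established.
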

Versions of the above theorem have been known for KRS by the works of Cao--Tian--Zhu (see~\cite[Theorems 0.1 and 0.2]{CTZ}) and Darvas--Rubinstein (see \cite[Theorem 2.11]{DR}).

\section{A quantitative  openness result: Proof of Theorem~\ref{thm:openedness}}

We have the following elementary
\begin{prop}\label{p:comparison} Suppose $v_1, v_2$ are positive weights with respective normalizations $\mathring{v}_1$ and $\mathring{v}_2$ such that 
\[ \inf_{\Pol_X}  \left({\mathring v}_1 -{\mathring v}_2\right) = - \lambda_0, \qquad \lambda_0>0.\]
Then, there exists a constant $C_{v_1, v_2}$ such that for any $\varphi\in {\mathcal H}^{\T}_{\omega_0}(X)$
\[ \left({\bf D}_{v_1}(\omega_{\varphi}) - {\bf D}_{v_2}(\omega_{\varphi})\right) \ge -\lambda_0{\bf J}(\omega_{\varphi}) + C_{v_1,v_2}.\]
\end{prop}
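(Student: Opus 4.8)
The plan is to compare the two Ding functionals term by term. Writing out the difference ${\bf D}_{v_1}(\omega_\varphi) - {\bf D}_{v_2}(\omega_\varphi)$ using Definition~\ref{d:v-Ding}, the logarithmic term $-\frac12\log\left(\int_X e^{\mathring h_{\omega_0}-2\varphi}\,\omega_0^{[n]}/\vol(X)\right)$ is independent of $v$ and cancels exactly. Hence
\[
{\bf D}_{v_1}(\omega_\varphi) - {\bf D}_{v_2}(\omega_\varphi) = -\frac{{\bf I}_{v_1}(\varphi)}{\vol_{v_1}(X)} + \frac{{\bf I}_{v_2}(\varphi)}{\vol_{v_2}(X)} = -\big({\bf I}_{\mathring v_1}(\varphi) - {\bf I}_{\mathring v_2}(\varphi)\big),
\]
since ${\bf I}_{v}/\vol_v(X) = {\bf I}_{\mathring v}$ by linearity of ${\bf I}_v$ in $v$ and the normalization $\vol_{\mathring v}(X)=1$ from \eqref{v-normalization}. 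By linearity of $v\mapsto {\bf I}_v$ we get ${\bf I}_{\mathring v_1}(\varphi) - {\bf I}_{\mathring v_2}(\varphi) = {\bf I}_{\mathring v_1 - \mathring v_2}(\varphi)$, so the whole task reduces to bounding ${\bf I}_{w}(\varphi)$ from above by $\lambda_0 {\bf J}(\omega_\varphi) + \text{const}$, where $w := \mathring v_1 - \mathring v_2$ is a (sign-changing) function with $\inf_{\Pol_X} w = -\lambda_0$.

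The key step is a monotonicity estimate: I claim ${\bf I}_w$ is monotone in $w$ in the sense that if $w \le w'$ pointwise on $\Pol_X$ then ${\bf I}_w(\varphi) \le {\bf I}_{w'}(\varphi) + C$ for all $\varphi\in\mathcal H^{\T}_{\omega_0}(X)$, with $C$ depending only on $w,w'$. This should follow by integrating the differential identity $d_\varphi {\bf I}_w(\dot\varphi) = \int_X w(\mu_{\omega_\varphi})\dot\varphi\,\omega_\varphi^{[n]}$ along the line segment $\varphi_t = t\varphi$: writing ${\bf I}_w(\varphi) = \int_0^1 \int_X w(\mu_{\omega_{t\varphi}})\,\varphi\,\omega_{t\varphi}^{[n]}\,dt$ is not quite a clean monotonicity statement because the sign of $\varphi$ varies, so instead I would use the standard fact (see \cite{lahdili, HL}) that ${\bf I}_w$ differs from $-{\bf J}$ scaled by a constant only when $w\equiv$ const, and more generally that ${\bf I}_w - {\bf I}_{w'} = {\bf I}_{w-w'}$ satisfies a cocycle/coercivity-type bound. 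Concretely, apply the monotonicity with $w' \equiv 0$: since $w = \mathring v_1 - \mathring v_2 \ge -\lambda_0$, we compare $w$ with the constant $-\lambda_0$, getting ${\bf I}_w(\varphi) \ge {\bf I}_{-\lambda_0}(\varphi) + C' = \lambda_0\, {\bf I}_1(\varphi) + C'$ hmm — wait, I need the inequality in the right direction. Since $w \ge -\lambda_0$ pointwise and both endpoints of integration agree, the correct chain is ${\bf I}_w(\varphi) - {\bf I}_{-\lambda_0}(\varphi) = {\bf I}_{w+\lambda_0}(\varphi)$ with $w + \lambda_0 \ge 0$; a nonnegative weight $u\ge 0$ gives ${\bf I}_u(\varphi) \le \big(\sup_{\Pol_X} u\big)\,{\bf I}_1(\varphi) + C''$ by the concavity-type estimate on ${\bf I}_1$ and boundedness of the oscillation, hence ${\bf I}_w(\varphi) \le -\lambda_0 {\bf I}_1(\varphi) + \big(\sup(w+\lambda_0)\big){\bf I}_1(\varphi)+C''$. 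Combining with ${\bf J}(\varphi) = \int_X\varphi\,\omega_0^{[n]} - {\bf I}_1(\varphi)$ and the translation-invariance of everything under $\varphi\mapsto\varphi+c$, one extracts the bound ${\bf I}_{\mathring v_1 - \mathring v_2}(\varphi) \le \lambda_0 {\bf J}(\omega_\varphi) + C_{v_1,v_2}$, which is exactly the desired statement after the sign flip.

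I expect the main obstacle to be pinning down the correct form of the elementary monotonicity/comparison estimate for ${\bf I}_w$ and making the additive constant $C_{v_1,v_2}$ genuinely uniform in $\varphi$ — this requires controlling $\int_X u(\mu_{\omega_\varphi})\varphi\,\omega_\varphi^{[n]}$ for a fixed bounded weight $u$ uniformly, which is where one invokes the normalization $\int_X\mathring v_j\,d\mu_{\rm DH}=1$ so that constant parts of $w$ integrate to zero against the fixed Duistermaat–Heckman measure and only the oscillation of $w$ matters. The cleanest route is probably: normalize $\varphi$ by $\sup_X\varphi = 0$ (using that all functionals involved are invariant under adding constants), then bound $-{\bf I}_w(\varphi)$ from below by $\lambda_0\,{\bf J}(\omega_\varphi)$ minus a constant using that $-\varphi \ge 0$ and ${\bf J}(\omega_\varphi) \sim \int_X(-\varphi)(\omega_0^{[n]} + \omega_\varphi^{[n]})$ up to equivalences; the pointwise inequality $w(\mu_{\omega_\varphi}) \ge -\lambda_0$ then controls $\int_X w(\mu_{\omega_\varphi})(-\varphi)\omega_\varphi^{[n]} \le \lambda_0\int_X(-\varphi)\omega_\varphi^{[n]}$, and the remaining $t$-integration and comparison of volume forms is routine. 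This should be written up in a few lines once the bookkeeping of signs and normalizations is fixed.
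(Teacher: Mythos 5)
Your opening reduction is correct and coincides with the paper's first step: the logarithmic term of ${\bf D}_v$ is independent of $v$ and cancels, ${\bf D}_{v_i}={\bf D}_{\mathring v_i}$, and everything reduces to proving ${\bf I}_{w}(\varphi)\le \lambda_0{\bf J}(\omega_\varphi)+C$ for $w:=\mathring v_1-\mathring v_2$. However, the ``monotonicity'' claims in the middle of your argument are false as stated and should be discarded: since ${\bf I}_u(\varphi+c)={\bf I}_u(\varphi)+c\int_{\Pol_X}u\,d\mu_{\rm DH}$, an inequality such as ${\bf I}_w(\varphi)\le{\bf I}_{w'}(\varphi)+C$ for $w\le w'$ with a constant uniform in $\varphi$ fails under $\varphi\mapsto\varphi+c$, $c\to-\infty$, whenever $\int_{\Pol_X}w\,d\mu_{\rm DH}\neq\int_{\Pol_X}w'\,d\mu_{\rm DH}$; the same translation test rules out ``${\bf I}_u(\varphi)\le(\sup u)\,{\bf I}_1(\varphi)+C$'' for general $u\ge 0$.

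The sketch in your last paragraph, on the other hand, does close, and it is a genuinely different and more elementary route than the paper's. Since $\int_{\Pol_X}w\,d\mu_{\rm DH}=0$, the functional ${\bf I}_w$ is translation-invariant, so you may normalize $\sup_X\varphi=0$. Writing ${\bf I}_w(\varphi)=\int_0^1\!\int_X w(\mu_{\omega_{t\varphi}})\,\varphi\,\omega_{t\varphi}^{[n]}\,dt$ along the linear path and using the pointwise bound $w\ge-\lambda_0$ together with $-\varphi\ge 0$ gives the \emph{exact} inequality ${\bf I}_w(\varphi)\le-\lambda_0{\bf I}_1(\varphi)=\lambda_0{\bf J}(\omega_\varphi)-\lambda_0\int_X\varphi\,\omega_0^{[n]}$, and the last term is bounded by a constant because $\varphi$ is $\omega_0$-psh with $\sup_X\varphi=0$ --- this is precisely \cite[(27)]{DR}, i.e. Lemma~\ref{l:comparison} for $\mathring v=\mathring 1$. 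One caution: do not pass to ${\bf J}(\omega_\varphi)\sim\int_X(-\varphi)(\omega_0^{[n]}+\omega_\varphi^{[n]})$ ``up to equivalences'', since any multiplicative loss would degrade the slope $\lambda_0$, which must be sharp for Corollary~\ref{coroQuantitative}. The paper instead avoids the path integral altogether: it introduces the weighted Aubin--Mabuchi energy ${\bf J}_v(\varphi)=\int_X\varphi\,v(\mu_{\omega_0})\,\omega_0^{[n]}-{\bf I}_v(\varphi)$, applies the nontrivial positivity ${\bf J}_v\ge 0$ for $v\ge 0$ (from \cite{HL} and \cite[Lemma~6.5]{AJL}) to $\mathring v_1-\mathring v_2+\lambda_0\ge 0$, and then controls the boundary terms $\int_X\varphi\,\mathring v_i(\mu_{\omega_0})\,\omega_0^{[n]}$ by Lemma~\ref{l:comparison}. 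Your route trades that positivity input for the explicit linear-path computation plus the same psh sub-mean-value estimate; both yield the sharp slope $\lambda_0$.
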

\begin{proof} Using that ${\bf D}_v={\bf D}_{\mathring{v}}$, we have
\[ \begin{split}
\left({\bf D}_{v_1}(\omega_{\varphi}) - {\bf D}_{v_2}(\omega_{\varphi})\right)  &= -\left({\bf I}_{\mathring{v}_1}(\varphi) - {\bf I}_{\mathring{v}_2}(\varphi)\right) \\
&= {\bf J}_{\mathring{v}_1}(\varphi) - {\bf J}_{\mathring{v}_2}(\varphi) - \int_X \varphi \mathring{v}_1(\mu_{\omega_0})\omega_0^{[n]} + \int_X \varphi \mathring{v}_2(\mu_{\omega_0})\omega_0^{[n]},
\end{split}
\]
where ${\bf J}_v$ denotes the $v$-weighted Aubin--Mabuchi functional defined by
\[ {\bf J}_v(\varphi) := \int_X \varphi \, v(\mu_{\omega_0}) \omega_0^{[n]} - {\bf I}_v(\varphi).\]
It is easy to check from the above formula that ${\bf J}_v(\varphi)= {\bf J}_v(\omega_{\varphi})$ is a functional defined on the space of $\T$-invariant K\"ahler metrics $\omega_{\varphi} \in \alpha$, which is \emph{linear} in $v$. Another key property established in \cite{HL} is that if $v>0$ on $\Pol$, then 
\[ {\bf J}_v (\omega_{\varphi}) \ge 0.\]
The above is actually true, by continuity in $v$ (see \cite[Lemma~6.5]{AJL}),  even if we merely assume $v\ge 0$ on $P$. We thus have, by the assumption in the proposition, 
\[ {\bf J}_{\mathring{v}_1}(\varphi) - {\bf J}_{\mathring{v}_2}(\varphi) + \lambda_0 {\bf J}(\varphi)= {\bf J}_{\mathring{v}_1-{\mathring{v}_2} + \lambda_0}(\varphi) \ge 0, \]
which leads to the inequality
\[ \left({\bf D}_{v_1}(\omega_{\varphi}) - {\bf D}_{v_2}(\omega_{\varphi})\right)  \ge -\lambda_0{\bf J}(\omega_{\varphi}) + \int_X \varphi \mathring{v}_2(\mu_{\omega_0})\omega_0^{[n]}- \int_X \varphi \mathring{v}_1(\mu_{\omega_0})\omega_0^{[n]}. \]
Proposition~\ref{p:comparison} then follows from Lemma~\ref{l:comparison} below.
\end{proof}
\begin{lemma}\label{l:comparison}\cite[Lemma 13]{HL} Let $\mathring{v}$ be a normalized positive weigh-function on $\Pol_X$. Then, there exists a uniform positive constant $C_{\mathring{v}}>0$, such that for any $\varphi \in \mathcal{H}^{\T}_{\omega_0}(X)$,
\[ \sup_X \varphi - C_{\mathring{v}} \le \int_X \varphi \, \mathring{v}(\mu_{\omega_0}) \omega_0^{[n]} \le \sup_X \varphi. \]
\end{lemma}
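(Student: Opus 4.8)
\emph{Proof proposal.} The plan is to reduce both inequalities to a single classical uniform integrability estimate for $\omega_0$-plurisubharmonic functions. First I would observe that the asserted double inequality is invariant under the normalization $\varphi \mapsto \varphi + c$. Indeed, since $\mathring v$ is normalized so that $\int_X \mathring v(\mu_{\omega_0})\,\omega_0^{[n]} = 1$ (by \eqref{v-normalization} together with \eqref{DH} applied to $f=\mathring v$ and $\omega=\omega_0$), adding a constant $c$ shifts each of the three quantities $\sup_X\varphi$, $\int_X\varphi\,\mathring v(\mu_{\omega_0})\,\omega_0^{[n]}$, and $\sup_X\varphi$ by the same amount $c$. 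Hence I may assume without loss of generality that $\sup_X\varphi = 0$, so that $\varphi \le 0$ on $X$.

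With this normalization the upper bound is immediate: since $\mathring v(\mu_{\omega_0})\,\omega_0^{[n]}$ is a positive measure of total mass $1$ and $\varphi\le 0$, the integrand is nonpositive and $\int_X\varphi\,\mathring v(\mu_{\omega_0})\,\omega_0^{[n]} \le 0 = \sup_X\varphi$. The substantive half is the lower bound, which I would rewrite as $\int_X(-\varphi)\,\mathring v(\mu_{\omega_0})\,\omega_0^{[n]} \le C_{\mathring v}$ with integrand $\ge 0$, and then compare the weighted measure with the unweighted one. Since $\mathring v$ is continuous and strictly positive on the compact polytope $\Pol_X$, it is bounded above by $M := \max_{\Pol_X}\mathring v < \infty$, so that
\[ \int_X (-\varphi)\,\mathring v(\mu_{\omega_0})\,\omega_0^{[n]} \le M \int_X (-\varphi)\,\omega_0^{[n]}. \]
Thus it suffices to bound $\int_X(-\varphi)\,\omega_0^{[n]}$ uniformly over all $\varphi$ with $\sup_X\varphi = 0$.

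The hard part, and the only genuinely nontrivial ingredient, is precisely this uniform $L^1$-estimate. It is a classical compactness fact of pluripotential theory that the set of $\omega_0$-plurisubharmonic functions normalized by $\sup_X\varphi = 0$ is relatively compact in $L^1(X,\omega_0^{[n]})$; equivalently, there is a constant $C_0$ depending only on $(X,\omega_0)$ with $\int_X(-\varphi)\,\omega_0^{[n]} \le C_0$ for every such $\varphi$. One obtains this directly from the sub-mean-value inequality for quasi-plurisubharmonic functions on a finite cover of $X$ by coordinate balls, combined with a Harnack-type propagation of the $L^1$-norm, using that $\sup_X\varphi = 0$ anchors $\varphi$ near $0$ somewhere; alternatively it follows from Skoda/H\"ormander exponential integrability. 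Granting this, I set $C_{\mathring v} := M\,C_0$ and conclude $\int_X(-\varphi)\,\mathring v(\mu_{\omega_0})\,\omega_0^{[n]} \le C_{\mathring v}$, i.e. $\int_X\varphi\,\mathring v(\mu_{\omega_0})\,\omega_0^{[n]} \ge \sup_X\varphi - C_{\mathring v}$, which closes the argument. I would finally remark that the $\T$-invariance of $\varphi$ is irrelevant here, so the estimate in fact holds for all $\omega_0$-psh functions.
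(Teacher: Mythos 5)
Your proof is correct and follows essentially the same route as the paper's: both reduce to $\sup_X\varphi=0$, handle the upper bound trivially, compare the weighted measure with the unweighted one via $\max_{\Pol_X}\mathring v$, and then invoke the classical uniform $L^1$ bound for normalized $\omega_0$-plurisubharmonic functions (which the paper cites as inequality (27) of Darvas--Rubinstein, exactly the fact you justify via pluripotential compactness).
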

\begin{proof} The RHS of the inequality is obvious. To obtain the LHS we observe that the inequality is invariant under translations of $\varphi$ with a constant, so  we can assume without loss that $\sup_X \varphi =0$. In this case, we need to prove
\[ \int_X \varphi \mathring{v}(\mu_{\omega_0})\omega_0^{[n]} \ge - C_{\mathring{v}}. \]
The  above  is established for instance in \cite[(27)]{DR} for $\mathring{v}=\mathring{1}=1/{\rm Vol}(X)$.  For a general weight function $\mathring{v}>0$, we let 
\[ \sup_{\Pol_X} \mathring{v} =\lambda_{\mathring{v}}>0.\]
Using $\sup_X \varphi =0$ we then have
\[ \int_X \varphi \, \mathring{v}(\mu_{\omega_0})\omega_0^{[n]} \ge \lambda_{\mathring{v}} \int_X \varphi \omega_0^{[n]} \ge- C_{\mathring{1}} \lambda_{\mathring{v}}=: -C_{\mathring{v}}.  \]
\end{proof}

Applying Theorem~\ref{thm:HL} and the previous observation we get the following quantitative openness result. 
\begin{cor}\label{coroQuantitative} Let $\Lambda_0>0$ be such that ${\bf D}_{v_0}$ is coercive with respect to $\T_{\C}$ of slope $\Lambda_0$. Then for all $v\in \mathcal{F}(X)$ such that $$ 0\leq -\inf_{\Pol_X} ( \mathring{v}-\mathring{v}_0) = \lambda_0 < \Lambda_0$$ ${\bf D}_v$ is coercive relative to $\T_{\C}$ of slope $(\Lambda-\lambda_0)$.    \end{cor}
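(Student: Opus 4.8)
The plan is to combine Proposition~\ref{p:comparison} with Theorem~\ref{thm:HL} in a direct way. First I would record that by hypothesis ${\bf D}_{v_0}$ is $\T_\C$-invariant and coercive of slope $\Lambda_0$, so there is a constant $C_0$ with ${\bf D}_{v_0}(\omega) \ge \Lambda_0 \inf_{\sigma\in\T_\C}{\bf J}(\sigma^*\omega) - C_0$. The goal is a coercivity inequality of the same shape for ${\bf D}_v$, with slope $\Lambda_0 - \lambda_0 > 0$, after which Theorem~\ref{thm:HL} will produce the $v$-soliton and in particular show $v\in\mathcal{D}(X)$; I would state the corollary's conclusion explicitly in this form (coercive of slope $\Lambda_0 - \lambda_0$) and note the typo $\Lambda$ vs $\Lambda_0$ in the displayed conclusion.

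Next I would apply Proposition~\ref{p:comparison} with $v_1 = v$ and $v_2 = v_0$: since $\inf_{\Pol_X}(\mathring v - \mathring v_0) = -\lambda_0$, we obtain a constant $C_{v,v_0}$ such that ${\bf D}_v(\omega_\varphi) - {\bf D}_{v_0}(\omega_\varphi) \ge -\lambda_0 {\bf J}(\omega_\varphi) + C_{v,v_0}$ for all $\varphi\in\mathcal{H}^\T_{\omega_0}(X)$. The crucial point is that this inequality must be made compatible with the infimum over $\sigma\in\T_\C$ appearing in the definition of coercivity. Here I would use that ${\bf J}$, ${\bf D}_v$ and ${\bf D}_{v_0}$ are all $\T$-invariant, and that $v,v_0\in\mathcal{F}(X)$ means $\Fut_v \equiv \Fut_{v_0}\equiv 0$, hence by Lemma~\ref{TC-invariance} both ${\bf D}_v$ and ${\bf D}_{v_0}$ are $\T_\C$-invariant. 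Therefore, replacing $\omega_\varphi$ by $\sigma^*(\omega_\varphi)$ in Proposition~\ref{p:comparison} and using ${\bf D}_v(\sigma^*\omega_\varphi) = {\bf D}_v(\omega_\varphi)$, ${\bf D}_{v_0}(\sigma^*\omega_\varphi) = {\bf D}_{v_0}(\omega_\varphi)$, we get for every $\sigma$
\[
{\bf D}_v(\omega_\varphi) \ge {\bf D}_{v_0}(\omega_\varphi) - \lambda_0 {\bf J}(\sigma^*\omega_\varphi) + C_{v,v_0}.
\]

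Now I would plug in the coercivity of ${\bf D}_{v_0}$. For any $\sigma\in\T_\C$ we have ${\bf D}_{v_0}(\omega_\varphi) = {\bf D}_{v_0}(\sigma^*\omega_\varphi) \ge \Lambda_0 {\bf J}(\sigma^*\omega_\varphi) - C_0$ (using $\inf_{\sigma'}{\bf J}(\sigma'^*\omega) \le {\bf J}(\sigma^*\omega)$... more precisely one applies the coercivity estimate and keeps the same $\sigma$ by re-optimizing). Thus
\[
{\bf D}_v(\omega_\varphi) \ge (\Lambda_0 - \lambda_0){\bf J}(\sigma^*\omega_\varphi) - C_0 + C_{v,v_0}
\]
for all $\sigma\in\T_\C$; taking the infimum over $\sigma$ on the right (legitimate since $\Lambda_0 - \lambda_0 > 0$, so the coefficient of ${\bf J}$ stays nonnegative) yields ${\bf D}_v(\omega_\varphi) \ge (\Lambda_0-\lambda_0)\inf_{\sigma\in\T_\C}{\bf J}(\sigma^*\omega_\varphi) - C_0 + C_{v,v_0}$, which is exactly coercivity of ${\bf D}_v$ relative to $\T_\C$ with slope $\Lambda_0 - \lambda_0$.

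The only genuinely delicate point is the bookkeeping with the infimum over $\T_\C$: one must be careful that Proposition~\ref{p:comparison} is applied \emph{after} replacing $\omega$ by $\sigma^*\omega$ and that the $\T_\C$-invariance of both Ding functionals (guaranteed by $v,v_0\in\mathcal{F}(X)$ and Lemma~\ref{TC-invariance}) is what allows the left-hand side to be unchanged, so that a single common $\sigma$ can be used in both the comparison inequality and the coercivity hypothesis before taking the infimum. Everything else is a direct substitution, so I expect no substantive obstacle beyond this ordering-of-quantifiers issue; I would also remark in passing that this corollary, together with Theorem~\ref{thm:HL} and the identification $\mathcal{S}(X) = \mathcal{D}(X)$ from \cite{HL}, is precisely what gives the quantitative ball around $v_0$ in $\mathcal{S}(X)$ referenced as Corollary~\ref{coroQuantitative} in the introduction, and is the mechanism behind the openness asserted in Theorem~\ref{thm:openedness}.
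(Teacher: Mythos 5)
Your overall strategy is exactly the paper's: combine Proposition~\ref{p:comparison} with the $\T_{\C}$-invariance of both Ding functionals (via Lemma~\ref{TC-invariance} and $v,v_0\in\mathcal F(X)$) and the coercivity of ${\bf D}_{v_0}$. However, one displayed step is false as written: you assert that for \emph{every} $\sigma\in\T_{\C}$ one has ${\bf D}_{v_0}(\omega_\varphi)={\bf D}_{v_0}(\sigma^*\omega_\varphi)\ge \Lambda_0\,{\bf J}(\sigma^*\omega_\varphi)-C_0$. The coercivity hypothesis only gives ${\bf D}_{v_0}(\omega_\varphi)\ge \Lambda_0\inf_{\sigma'}{\bf J}(\sigma'^*\omega_\varphi)-C_0$, and since $\inf_{\sigma'}{\bf J}(\sigma'^*\omega_\varphi)\le{\bf J}(\sigma^*\omega_\varphi)$ with $\Lambda_0>0$, your pointwise-in-$\sigma$ version is a strictly stronger (and generally false) statement --- it would amount to bounding ${\bf D}_{v_0}(\omega_\varphi)$ below by $\Lambda_0\sup_\sigma{\bf J}(\sigma^*\omega_\varphi)$, which is typically infinite along a nontrivial $\T_{\C}$-orbit. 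Your parenthetical ``keeps the same $\sigma$ by re-optimizing'' gestures at the repair but does not carry it out, and the subsequent ``take the infimum over $\sigma$ on the right'' is then built on the false inequality.

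The repair is short and is what the paper does: work at a metric $\omega^*_\varphi$ in the $\T_{\C}$-orbit realizing $\inf_{\sigma}{\bf J}(\sigma^*\omega_\varphi)$ (existence by \cite[Lemma 29]{HL}). Applying Proposition~\ref{p:comparison} at $\omega^*_\varphi$ and using $\T_{\C}$-invariance of both Ding functionals gives ${\bf D}_v(\omega_\varphi)\ge{\bf D}_{v_0}(\omega_\varphi)-\lambda_0\inf_\sigma{\bf J}(\sigma^*\omega_\varphi)+C_{v,v_0}$, and then the coercivity of ${\bf D}_{v_0}$, which involves the \emph{same} infimum, yields $(\Lambda_0-\lambda_0)\inf_\sigma{\bf J}(\sigma^*\omega_\varphi)-C_0+C_{v,v_0}$ directly, with no further infimum to take. (Alternatively, avoid the minimizer entirely: since your comparison inequality ${\bf D}_v(\omega_\varphi)\ge{\bf D}_{v_0}(\omega_\varphi)-\lambda_0{\bf J}(\sigma^*\omega_\varphi)+C_{v,v_0}$ holds for all $\sigma$ and the coefficient of ${\bf J}$ there is \emph{negative}, take the supremum of the right-hand side over $\sigma$, which legitimately replaces ${\bf J}(\sigma^*\omega_\varphi)$ by $\inf_\sigma{\bf J}(\sigma^*\omega_\varphi)$, and only then invoke coercivity of ${\bf D}_{v_0}$.) With this fix your argument coincides with the paper's; the rest of your write-up, including the reading of the conclusion as slope $\Lambda_0-\lambda_0$, is correct.
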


\begin{proof}[Proof of Theorem~\ref{thm:openedness}]  Let $v_0>0$ be a weight such that ${\bf D}_{v_0}$ is coercive relative to $\T_{\C}$ of slope $\Lambda_0>0$. 
Suppose $v>0$ is a weight such that 
\[ \big\|v- v_0\big\|_{C^0(\Pol_X)} = \varepsilon. \]
For $\varepsilon$ small enough, the above inequality yields
\[ \inf_{\Pol_X} ( \mathring{v}-\mathring{v}_0) = -\lambda_0, \qquad 0<\lambda_0<\Lambda_0.\]
We also assume  that ${\Fut}_{v}\equiv 0$. By Lemma~\ref{TC-invariance}, we know that the latter condition is equivalent to ${\bf D}_v$ being $\T_{\C}$-invariant. We let $\varphi\in {\mathcal H}^{\T}_{\omega_0}(X)$ and denote by $\omega^*_\varphi$ a K\"ahler metric in the $\T_{\C}$-orbit of $\omega_{\varphi}$, such that
\[ \inf_{\sigma \in \T_{\C}}{\bf J}(\sigma^* \omega_{\varphi})= {\bf J}(\omega^*_{\varphi}).\]
Such a metric exists by \cite[Lemma 29]{HL}.
By the argument in the proof of Lemma~\ref{l:comparison} above and the $\T_{\C}$-invariance of ${\bf D}_v$ and ${\bf D}_{v_0}$, we have
\[ \begin{split}
\left({\bf D}_v -{\bf D}_{v_0}\right)(\omega_{\varphi}) &= \left({\bf D}_v -{\bf D}_{v_0}\right)(\omega^*_{\varphi}) \\
&\ge-\lambda_0{\bf J}(\omega^*_{\varphi})- C_v= -\lambda_0 \inf_{\sigma \in \T_{\C}}{\bf J}(\sigma^*\omega_{\varphi})- C_v. \end{split}\]
 As ${\bf D}_{v_0}$ is $\Lambda_0$-coercive relative to $\T_{\C}$ and $\Lambda_0>\lambda_0$, we conclude that 
${\bf D}_v$ is also coercive relative to $\T_{\C}$. 

The convexity of $\mathcal{D}(X)$  follows from the fact that the subspace of normalized weight functions is linearly convex.   \end{proof}


\section{Sasaki geometry and transversal K\"ahler--Ricci solitons.} In this section, we recall a notion of special Sasaki geometry introduced in \cite{FOW}. We use the point of view of \cite{ACL},  which will allow us to recast the possibly irregular transversal K\"ahler geometry of a Sasaki manifold in terms of the K\"ahler geometry of a given regular or quasi-regular quotient.

\subsection{Sasaki structures}\label{sss:logDisc}

We consider the following general set up:  $(N, \Ds_0, J_0)$ is a compact $(2n+1)$-dimensional strictly pseudo-convex CR manifold invariant under the action of a compact torus $\hat \T$ whose Lie algebra is denoted by $\kt$. We say that $\hat\xi \in \kt$ is a \emph{Sasaki--Reeb vector field} if $\hat \xi$ is transversal  to $\Ds_0$ and the corresponding  contact $1$-form   $\eta_0^{\hat \xi}$,  which vanishes on $\Ds$ and is equal to $1$ when evaluated at $\hat \xi$,  defines a transversal K\"ahler form $d\eta_0^{\hat \xi}$ on $(\Ds_0, J_0)$, i.e. $(d\eta_0^{\hat \xi})_{|_{\Ds_0}} >0$. We denote by $\kt_+(N) \subset \kt$ the \emph{Sasaki--Reeb} cone  of Sasaki--Reeb vector fields,  and assume that $\kt_+(N)$ is non-empty. For any $\hat \xi \in \kt_+(N)$, the data $(\hat\xi, \eta_0^{\hat \xi}, \Ds_0, J_0)$ is referred to as a \emph{Sasaki structure} on $N$.

\begin{ex}[regular Sasaki structures]\label{ex:regular}
A basic example of Sasaki manifolds are the so-called \emph{regular Sasaki} structures, described as follows.  Let $X$ be  a smooth compact complex manifold $X$ polarized by a line bundle $L$. Consider the unitary $\Sph^1$-bundle $N_{\omega_0} \subset L^{-1}$ with respect to the Hermitian metric $H_{\omega_0}$ on $L^{-1}$ whose curvature is $-i\omega_0$. Here,  $\omega_0 \in \mathcal{K}^{\T}_{2\pi c_1(L)}(X)$, where $\T$ is a maximal torus in the group of reduced automorphisms of $X$, corresponding to  a maximal torus $\hat \T \subset \Aut(X, L)$. Clearly, $\hat \T$  acts on $N_{\omega_0}$ preserving the induced CR structures $(\Ds_0, J_0)$. If $\hat\chi \in \kt$ denotes the generator of the $\Sph^1$-action on the fibres, then $\hat \chi \in \kt_+(N)$. The corresponding $1$-form $\eta_0^{\hat \chi}$ is the unique connection $1$-form on $N_{\omega_0}$ with curvature $d\eta_0^{\hat \chi}= \pi^* \omega_0$ whereas $\Ds_0$ becomes the horizontal distribution of $\eta_0^{\hat \chi}$.  The transversal K\"ahler structure in this case is just the pull-back of $\omega_0$ to $(\Ds_0, J_0)$. Note that $X= N_{\omega_0}/\Sph^1_{\hat \chi}$, which allows us to recover the K\"ahler structure $\omega_0$ on $X$ from the corresponding Sasaki structure $(\hat \chi, \eta_0^{\hat \chi}, \Ds_0, J_0)$ on $N_{\omega_0}$. This correspondence also applies to periodic Sasaki-Reeb vector fields $\hat\xi \in \kt_+(N)$, in which case $(N, \hat \xi, \Ds_0, J_0)$ is the (smooth) total space of an $\Sph^1$-orbibundle over the K\"ahler orbifold $(X:=N/\Sph^1_{\hat \xi}, \omega_0)$.  This situation is referred to in the literature as a \emph{quasi-regular} Sasaki structure.
\end{ex}
We now introduce the variation spaces of Sasaki structures: For  $\hat \xi\in\kt_+(N)$ on $(N, \Ds_0, J_0)$ and  $\eta^{\hat \xi}_0$ the associated contact form, we let
\[\Xi_{\hat \xi,\eta_0^{\hat \xi},J^{\hat \xi}}^{\hat \T}(N) := \left\{ \varphi\in C^\infty(N)^{\hat \T} \, \, | \, \, \eta_{\varphi}^{\hat \xi}:=\eta_0^{\hat \xi}+d^c_{\hat \xi} \varphi \, \, \mathrm{satisfies} \, \, d\eta_{\varphi}^{\hat \xi} >0 \mbox{ on } \Ds_{\varphi}:=\ker (\eta_{\varphi}^{\hat\xi})\right\}, \]
where $J^{\hat\xi}\in{\rm End}(TN)$ extends $J_0\in {\rm End}(\Ds_0)$ by letting $J^{\hat \xi}(\hat\xi)=0$, 
$d^c_{\hat \xi} \varphi := -d\varphi \circ J^{\hat \xi}$,  and the positivity  of $d\eta^{\hat \xi}_{\varphi}$ on $\Ds_{\varphi}$ is defined with respect to $J^{\hat \xi}_{|_{\Ds_{\varphi}}}$. We refer to \cite[\S 2]{ACL} for the basic properties of this space. In particular, for any $\varphi \in \Xi_{\hat \xi,\eta_0^{\hat \xi},J^{\hat \xi}}^{\hat \T}(N)$, $(\hat \xi, \eta_{\varphi}^{\hat \xi}, \Ds_{\varphi}, J^{\hat \xi}_{|_{\Ds_{\varphi}}})$ is a Sasaki structure and the space $\Xi_{\hat \xi,\eta_0^{\hat \xi},J^{\hat \xi}}^{\hat \T}(N)$ is called a \emph{slice of $(\hat \xi, J^{\hat \xi})$-compatible $\hat \T$-invariant Sasaki potentials}. We shall use through this paper the following identification established in \cite{ACL,HS}.
\begin{prop}\label{sasaki-identification} For any Sasaki--Reeb vector fields $\hat \chi, \hat \xi \in \kt_+(N)$, there exists a natural bijection
\[\Theta_{\hat \chi, \hat \xi} : \Xi_{\hat \xi,\eta_0^{\hat\xi},J^{\hat \xi}}^{\hat \T}(N) \cong \Xi_{\hat \chi,\eta_0^{\hat \chi},J^{\hat \chi}}^{\hat \T}(N). \]
\end{prop}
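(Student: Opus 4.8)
The plan is to construct the bijection $\Theta_{\hat\chi,\hat\xi}$ explicitly by composing two natural maps: the identification of a slice of compatible Sasaki potentials with the corresponding space of transversal K\"ahler potentials on the quasi-regular (or irregular, via a cone-type argument) quotient, and then observing that the transversal K\"ahler geometry ``upstairs'' on $N$ is intrinsic --- it does not depend on which Sasaki--Reeb polarization $\hat\xi$ we use to split $TN$. More concretely, I would first recall from \cite{ACL,HS} the cone picture: to a Sasaki structure on $N$ one associates the K\"ahler cone $(Y = N\times \R_{>0}, \omega_{\hat\xi})$, where $\omega_{\hat\xi} = \tfrac12 dd^c(r_{\hat\xi}^2)$ for the radial function $r_{\hat\xi}$ determined by $\hat\xi$ and $\eta_0^{\hat\xi}$, and the complex structure on $Y$ is the one induced by $(\Ds_0,J_0)$ together with the choice of splitting; the key point is that all these complex structures on $Y$ agree, since varying $\hat\xi\in\kt_+(N)$ keeps $(\Ds_0,J_0)$ fixed and merely changes the transversal holomorphic foliation, not the ambient complex structure on $Y$. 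A $\hat\xi$-compatible Sasaki potential $\varphi\in\Xi^{\hat\T}_{\hat\xi,\eta_0^{\hat\xi},J^{\hat\xi}}(N)$ corresponds to a new radial function, hence a new K\"ahler potential on the \emph{same} complex manifold $Y$, in the same cohomological setup.

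Next I would make the bijection precise. Given $\varphi\in\Xi^{\hat\T}_{\hat\xi,\eta_0^{\hat\xi},J^{\hat\xi}}(N)$, the new contact form $\eta_\varphi^{\hat\xi}$ and structure $(\Ds_\varphi,J^{\hat\xi}|_{\Ds_\varphi})$ determine, via the cone construction applied now with the reference Reeb field $\hat\chi$, a unique function $\psi$ such that $\eta_0^{\hat\chi}+d^c_{\hat\chi}\psi$ is the contact form of the \emph{same} Sasaki structure polarized by $\hat\chi$; equivalently, at the level of cones, $r_{\hat\chi,\psi}^2$ and $r_{\hat\xi,\varphi}^2$ are related because both are determined by the underlying transversal metric and the intrinsic complex structure of $Y$. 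Setting $\Theta_{\hat\chi,\hat\xi}(\varphi):=\psi$ gives the desired map. Its inverse is $\Theta_{\hat\xi,\hat\chi}$ built the same way, and the cocycle relation $\Theta_{\hat\chi,\hat\xi}\circ\Theta_{\hat\xi,\hat\zeta}=\Theta_{\hat\chi,\hat\zeta}$ is immediate from the construction, which in particular gives bijectivity. One should check that $\psi$ is smooth and $\hat\T$-invariant: $\hat\T$-invariance is automatic since all the data are $\hat\T$-invariant and $\hat\chi,\hat\xi\in\kt$, while smoothness follows because the passage between radial functions $r_{\hat\xi}$ and $r_{\hat\chi}$ on the fixed complex cone $Y$ is smooth away from the apex and homogeneous, hence descends to a smooth relation on $N$.

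The main obstacle I expect is the irregular case: when $\hat\xi$ or $\hat\chi$ generates a non-compact one-parameter subgroup, there is no honest quotient orbifold $X$, so one genuinely needs the cone $Y$ (rather than a K\"ahler base) as the intermediary, and one must be careful that the two radial functions $r_{\hat\xi,\varphi}$ and $r_{\hat\chi,\psi}$ really do cut out the \emph{same} link $N$ inside $Y$, with the \emph{same} CR structure, so that $\psi$ is a well-defined function on $N$ and not merely on some abstract isomorphic copy. This is exactly the content of the ``same complex cone, different radial functions'' principle, and verifying it amounts to checking that the biholomorphism of cones intertwining the two pictures restricts to the identity on $N$ --- which it does because both links are recovered as $\{r=1\}$ for radial functions with the prescribed homogeneity under the single $\R_{>0}$-action, and the CR structure on $\{r=1\}$ is $(\Ds_\varphi,J^{\hat\xi}|_{\Ds_\varphi})$ independently of the choice of radial coordinate. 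Once this is in place, positivity of $d\eta_\psi^{\hat\chi}$ on $\Ds_\psi$ transfers from positivity of $d\eta_\varphi^{\hat\xi}$ on $\Ds_\varphi$ because both express positivity of the \emph{same} transversal K\"ahler form viewed in two foliation charts, completing the proof.
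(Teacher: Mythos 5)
The paper does not actually prove Proposition~\ref{sasaki-identification}: it is quoted from \cite{ACL,HS}, and your cone-based strategy is the same one used in those references. So the question is whether your sketch is complete on its own terms, and it stops short precisely at the step where those references do the real work.

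The genuine gap is the transversality/positivity of $\hat\chi$ with respect to the \emph{deformed} structure. Given $\varphi\in\Xi^{\hat\T}_{\hat\xi,\eta_0^{\hat\xi},J^{\hat\xi}}(N)$, the deformed contact distribution is $\Ds_\varphi=\ker\eta^{\hat\xi}_\varphi$, and the candidate $\hat\chi$-contact form is $\eta^{\hat\chi}_\psi=f^{-1}\eta^{\hat\xi}_\varphi$ with $f:=\eta^{\hat\xi}_\varphi(\hat\chi)$; then indeed $\Ds_\psi=\Ds_\varphi$ and $d\eta^{\hat\chi}_\psi\restr{\Ds_\varphi}=f^{-1}\,d\eta^{\hat\xi}_\varphi\restr{\Ds_\varphi}$, so positivity ``transfers'' --- but \emph{only once you know $f>0$ everywhere on $N$}. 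You assert this implicitly (``both express positivity of the same transversal K\"ahler form viewed in two foliation charts''), but the two transversal forms differ by the conformal factor $f^{-1}$, and $f>0$ is not automatic from $\hat\chi\in\kt_+(N)$: that hypothesis only gives $\eta_0^{\hat\xi}(\hat\chi)>0$, whereas $f=\eta_0^{\hat\xi}(\hat\chi)+(d^c_{\hat\xi}\varphi)(\hat\chi)$ acquires a term with no a priori sign. The standard way to close this is on the cone: $f=2\,\mu^{\hat\chi}_{\hat\omega_\varphi}/r_\varphi^2$ is the $\hat\chi$-momentum of the K\"ahler cone metric $\hat\omega_\varphi$ divided by $r_\varphi^2$, and one invokes the invariance of the momentum cone of $(Y,\hat\T)$ under change of compatible cone metric (equivalently, the invariance of the Sasaki--Reeb cone along the slice) to conclude that its sign is the same as at $\varphi=0$. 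Without this (or an openness--closedness argument along $t\mapsto t\varphi$), your map $\Theta_{\hat\chi,\hat\xi}$ is not known to land in $\Xi^{\hat\T}_{\hat\chi,\eta_0^{\hat\chi},J^{\hat\chi}}(N)$. A secondary imprecision: the complex structures on $N\times\R_{>0}$ induced by different Reeb fields are not literally equal as tensors (they disagree on the span of $\partial_r$, since $\hat J(r\partial_r)=\hat\xi$ versus $\hat J'(r\partial_r)=\hat\chi$); they are only canonically biholomorphic, and the links $\{r_{\hat\xi}=1\}$ and $\{r_{\hat\chi}=1\}$ are homogeneous under \emph{different} $\R_{>0}$-actions, hence genuinely different hypersurfaces of $Y$ in general. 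Your ``same complex cone, different radial functions'' principle is the right idea, but the argument needs that biholomorphism, and the induced identification of the two links with the fixed $N$, spelled out rather than taken for granted.
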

\begin{rem}\label{r:sasaki-identification}
For a (quasi) regular Sasaki structure $(N, \hat \chi, \eta_0^{\hat \chi}, \Ds_0, J_0)$ (see Example~\ref{ex:regular}), we have
\[ \Xi_{\hat\chi,\eta_0^{\hat\chi},J^{\hat\chi}}^{\hat \T}(N) \equiv \mathcal{H}^{\T}_{\omega_0}(X),\]
which, together with Proposition~\ref{sasaki-identification}, allows one to study $(\hat \xi, J^{\hat\xi})$-compatible $\hat \T$-invariant Sasaki structures on $N$ via the induced K\"ahler geometry  of $(M, \T, \alpha=2\pi c_1(L))$.
\end{rem}

\subsection{K\"ahler cones vs Sasaki structures}\label{s:Kahler-cone}
A smooth complex cone \cite{CS,ACL} $(Y,J, \hat\xi)$ is a non-compact $(n+1)$-dimensional complex manifold endowed with a free holomorphic  $\R^+$-action (sometimes denoted by $\R^+_{-J\hat \xi}$) generated by the flow of a real holomorphic vector field $-J\hat\xi$.  We shall further assume that $N= Y/\R_{-J\hat \xi}^+$ is a compact $(2n+1)$-dimensional manifold $N$. A typical example is $Y= L^{\times}$, where $L$ is a holomorphic line bundle over $X$  and  $L^{\times}$ denotes the total space of $L$ with its zero section removed. In this case, we can take $\hat \chi$ be the generator of the natural fiberwise $\Sph^1$-action on $L$, so that  $L^{\times}/\R_{-J\hat \chi}^{+}= N$ as in Example~\ref{ex:regular}. 

We shall further fix a maximal compact torus $\hat\T \subset \Aut(Y)$ and consider free $\R^+_{-J\hat \xi}$-actions for $\hat \xi \in \kt$, where $\kt = {\rm Lie}(\hat \T)$.  For any such $\hat\xi \in \kt$, we consider the space of functions
\[ \cR_{\hat \xi}^{\hat \T}(Y):= \left\{r\in C^\infty(Y, \R_{>0})^{\hat \T} \,|\, \mL_{-J\hat\xi} r =r, \, \, \hat \omega_r := {\frac{1}{4}} dd^c r^2 >0 \right\},\]
and assume that $\cR_\xi^{\hat \T}(Y)\neq \emptyset$ for at least some $\hat \xi \in \kt$; 
clearly, for such  a $\hat \xi$, any $r\in \cR_{\hat \xi}^{\hat \T}(Y)$ gives rise to a K\"ahler cone metric  $\hat \omega_r$, i.e. satisfying $\mL_{-J\hat \xi} \hat\omega_r = 2\hat\omega_r$ with respect to the $\R^+$-action generated by $-J\hat \xi$.  We shall refer to the pair $(Y, \hat \xi)$ as a \emph{polarized K\"ahler cone} and to $\hat \xi$ as a \emph{polarization}. We denote by $\kt_+(Y) \subset \kt$ the affine cone of polarizations of $Y$.

\begin{ex}[regular K\"ahler cones]
An important example is the case of \emph{regular K\"ahler cones}.  In this case,  $Y= (L^{-1})^{\times}$ for a polarization $L$ of $X$, and $\hat \chi$ is the generator of the $\Sph^1$ fiberwise action on $L^{-1}$.  In this case, the fibre-norm $r_{\omega}=|| \cdot||_{H_{\omega}}$ with respect the Hermitian metric $H_{\omega}$ on $L^{-1}$ with curvature $-i\omega$ where $\omega \in \mathcal{K}_{2\pi c_1(L)}(X)$ gives rise to a K\"ahler cone structure on $Y$ polarized by $\hat \chi$, compare with Example~\ref{ex:regular}. Furthermore, if we fix a maximal torus $\hat \T \subset \Aut_0(X,L)$, which projects to a maximal torus $\T\subset \Aut_{\rm red}(X)$ in the reduced automorphisms group of $X$, then $r_{\omega} \in {\mathcal R}^{\hat\T}_{\hat \chi}(Y)$ provided that $\omega \in \mathcal{K}^{\T}_{2\pi c_1(L)}(X)$.
\end{ex}

Any $r_0\in \cR_{\hat \xi}^{\hat \T}(Y)$ gives rise to a Sasaki structure on $N=Y/\R^{+}_{-J\hat \xi}$ as follows: 
The torus action of $\hat \T$ descends to $N$, and $\hat \xi \in \kt$ defines  a vector field on $N$. The $1$-form $\eta_0^{\hat \xi}:= d^c \log r_0$ on $Y$ is $-J\hat\xi$-basic, so we can view it  as a $1$-form on $N$; furthermore, the distribution $\Ds_0:= {\rm ker}(\eta_0^{\hat \xi}) \subset TN$ inherits a CR structure $J_0$,  defined through the almost complex structure on $\langle \hat \xi, J\hat \xi \rangle^{\perp_{\omega_{r_0}}}\subset TY$. The data $(\hat \xi, \eta_0^{\hat \xi}, \Ds_0, J_0, \hat \T)$ is a Sasaki structure on $N$, as defined in the previous subsection. Furthermore, for any other  $r\in \cR_{\hat \xi}^{\hat \T}(Y)$, the smooth function $\varphi := \log(r/r_0)$ is $(-J\hat \xi)$-invariant and defines an element of $\Xi_{\hat \xi,\eta_0^{\hat \xi},J^{\hat \xi}}^{\hat \T}(N)$. It turns out that  
\[\kt_+(Y)=\kt_+(N)=:\kt_+, \qquad  \cR_{\hat \xi}^{\hat \T}(Y) \cong \Xi_{\hat \xi,\eta_0^{\hat \xi},J^{\hat \xi}}^{\hat \T}(N), \qquad \forall \hat \xi \in \kt_+.\]
These identifications are discussed at length for example in \cite[\S 1, \S 2]{ACL}. 
In the special case when $(Y, J, \hat \chi)= (L^{-1})^{\times}$ for a polarization $L$ of $X$, our discussion here and Proposition~\ref{sasaki-identification} provide the following sequence of identifications:
\begin{equation} \label{cone-identification}
\cR_{\hat \xi}^{\hat \T}(Y) \cong \Xi_{\hat \xi,\eta_0^{\hat \xi},J^{\hat \xi}}^{\hat \T}(N) \cong \Xi_{\hat \chi,\eta_0^{\hat \chi},J^{\hat \chi}}^{\hat \T}(N) \equiv \mathcal{H}^{\T}_{\omega_0}(X), \qquad \forall \hat \xi \in \kt_+. \end{equation}

\subsection{K\"ahler cones with trivial canonical bundle}\label{sssectgammalambda} We now consider a compact Sasaki  manifold $(N, \hat \xi, \eta_0^{\hat \xi}, \Ds_0, J_0, \hat \T)$ which is {\it transversally Fano}, that is it satisfies the following additional condition: the $\hat \xi$-basic first Chern class $c_1^{\hat \xi}(\Ds_0, J_0)$ verifies 
\begin{equation}\label{Fano-Sasaki} c_1^{\hat \xi}(\Ds_0, J_0)= \lambda_{\hat \xi} [d\eta_0^{\hat \xi}]_{B,\hat \xi}, \end{equation}
for some $\lambda_{\hat \xi}>0$. This is {equivalent} to the conditions that $c_1(\Ds_0, J_0) =0$ and  the $\hat\xi$-basic first Chern class of $(\Ds_0,J_0)$ is positive (cf. \cite[Prop.4.3]{FOW}). It  follows that \eqref{Fano-Sasaki} holds true for any other Sasaki-Reeb field $\hat \chi \in \kt_+$, but the corresponding positive constants $\lambda_{\hat \xi}$ and $\lambda_{\hat \chi}$ are in general different~\footnote{If \eqref{Fano-Sasaki} holds with $\lambda_{\hat \xi}\leq 0$ then the Sasaki-Reeb cone $\kt_+$ is one dimensional.}. 

By the work of Martelli--Sparks--Yau \cite{MSY} (see also \cite[Proposition 2.5]{VC}), in the case when $\lambda_{\hat \xi}>0$ and  $\hat\xi$ is quasi-regular, the condition \eqref{Fano-Sasaki} is equivalent to the existence of a (unique up to a multiplicative constant) holomorphic volume form $\Omega$  on the corresponding polarized K\"ahler cone $(Y, \hat \xi, \hat\T)$, such that
\begin{equation}\label{CY-cones} \mathcal{L}_{-J\hat \xi} \Omega =\langle\gamma_{Y},\hat \xi \rangle \Omega, \end{equation}
where the multiplicative constant $\langle\gamma_{Y},\hat \xi \rangle \in \R_{>0}$ is canonically determined by $(Y,\hat\xi)$.  The holomorphic volume form $\Omega$ on $Y$ satisfying  \eqref{CY-cones} depends a priori on the quasi-regular Sasaki--Reeb vector field $\hat\xi$ we have chosen,  but the uniqueness yields that it can be chosen so that \eqref{CY-cones} holds for  any other quasi-regular element of $\hat\Sigma^+$. 
We can further extend the validity of \eqref{CY-cones} to irregular elements of $\hat\Sigma_+$ by density.

As the left hand side of \eqref{CY-cones} is linear with respect to $\hat \xi \in \kt$, this defines $\gamma_Y \in \kt^*$ by \footnote{In \cite[\S 2.2]{chili15a} and \cite{LiLiuXu}, the vector $\gamma_Y\in \kt^*$ appears as the restriction of the log discrepancy seen as a map on $\kt_+$, itself identified with a subspace of valuations.} \begin{equation}\label{CY-LogDisc} \langle \gamma_Y,\hat \chi \rangle := \frac{ \mathcal{L}_{-J\hat \chi} \Omega }{\Omega}.\end{equation} 

\begin{defn}\label{dfnNormSubspacesReeb} Let $(Y, \hat \xi, \hat \T)$ be a polarized K\"ahler cone which admits a global holomorphic volume form $\Omega$ satisfying \eqref{CY-cones}. We consider the sub-algebra 
\[ \tor: = \left\{ \zeta \in \hat \tor \, | \, \langle \gamma_Y,  \zeta \rangle =0\right \},\]
and the convex open polytope of {\it normalized} Reeb vector fields
\[ \hat\Sigma_+ := \left\{ \hat \xi \in \kt_+ \, | \,  \langle \gamma_Y, \hat \xi \rangle =1. \right\}. \]
\end{defn}

Both $\lambda_{\hat \xi}$ from \eqref{Fano-Sasaki} and $\langle\gamma_{Y},\hat \xi \rangle$ are linear functions on $\kt_+$ along the ray $\R_{>0} \cdot \hat\xi$ and agree at one point (the point at which we have $\lambda_{\hat \xi}=n+1$, see \cite[Proposition 2.5]{VC}). Thus, we get  
\begin{lemma} \label{lem:lambda=gamma} For any $\hat\xi\in \kt_+$, we have $\lambda_{\hat \xi}=\langle\gamma_{Y},\hat \xi \rangle$. In particular, 
\[\hat\Sigma_+ = \left\{ \hat \xi \in \kt_+ \, | \,  \eqref{Fano-Sasaki} \mbox{ holds with } \lambda_{\hat \xi} =1. \right\}.\]   
\end{lemma}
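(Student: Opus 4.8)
The plan is to reduce everything to a one-parameter statement along the ray $\mathbb{R}_{>0}\cdot\hat\xi$ through a fixed Sasaki--Reeb vector field, and then invoke the already-quoted normalization from \cite[Proposition 2.5]{VC}. First I would observe that both quantities in question are homogeneous of degree one in $\hat\xi$ along such a ray: the constant $\langle\gamma_Y,\hat\xi\rangle$ is linear in $\hat\xi$ by its very definition \eqref{CY-LogDisc} (the left-hand side $\mathcal{L}_{-J\hat\xi}\Omega$ is linear in $\hat\xi$), and $\lambda_{\hat\xi}$ is homogeneous of degree one along the ray because under the rescaling $\hat\xi\mapsto c\hat\xi$ one has $\eta_0^{c\hat\xi}=c^{-1}\eta_0^{\hat\xi}$, hence $[d\eta_0^{c\hat\xi}]_{B,c\hat\xi}=c^{-1}[d\eta_0^{\hat\xi}]_{B,\hat\xi}$, while the $\hat\xi$-basic first Chern class $c_1^{\hat\xi}(\Ds_0,J_0)$ is unchanged (it is $2\pi c_1$ of the transversal canonical bundle, independent of the scale of the Reeb field). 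Matching the two sides of \eqref{Fano-Sasaki} forces $\lambda_{c\hat\xi}=c\,\lambda_{\hat\xi}$.

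Next I would pin down the common value at one point of the ray. By \cite[Proposition 2.5]{VC}, along each ray there is a distinguished Reeb vector field — call it $\hat\xi_*$ — for which $\lambda_{\hat\xi_*}=n+1$; this is precisely the normalization under which \eqref{Fano-Sasaki} exhibits the transversal Kähler cone as ``Fano of index $n+1$'', equivalently the point where the holomorphic volume form scales with weight $n+1$, i.e. $\mathcal{L}_{-J\hat\xi_*}\Omega=(n+1)\Omega$. Reading off \eqref{CY-LogDisc} at $\hat\xi_*$ gives $\langle\gamma_Y,\hat\xi_*\rangle=n+1=\lambda_{\hat\xi_*}$. Two linear functions on the line $\mathbb{R}\hat\xi$ that agree at the nonzero point $\hat\xi_*$ and are both homogeneous of degree one must coincide on the whole ray, so $\lambda_{\hat\xi}=\langle\gamma_Y,\hat\xi\rangle$ for every $\hat\xi$ on that ray; since the ray through an arbitrary $\hat\xi\in\kt_+$ was arbitrary, the identity holds on all of $\kt_+$. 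The second assertion is then immediate: $\hat\Sigma_+$ was defined by $\langle\gamma_Y,\hat\xi\rangle=1$, which by the identity just proved is the same as $\lambda_{\hat\xi}=1$.

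The only genuine subtlety — and the step I expect to need the most care — is the homogeneity of $\lambda_{\hat\xi}$, because the basic cohomology groups $H^*_{B,\hat\xi}(\Ds_0,J_0)$ and the pairing $[\cdot]_{B,\hat\xi}$ themselves depend on $\hat\xi$, so ``rescaling'' has to be interpreted correctly: the point is that for $c\hat\xi$ on the same ray the basic complex is literally unchanged (the characteristic foliation is the same), only the tautological class $[d\eta_0^{c\hat\xi}]$ rescales by $c^{-1}$, which is exactly the behaviour of the contact form. One should also note, as the footnote in the excerpt already flags, that the argument presupposes $\lambda_{\hat\xi}>0$; when \eqref{Fano-Sasaki} holds with $\lambda_{\hat\xi}\le 0$ the cone $\kt_+$ is one-dimensional and there is nothing to normalize, so the statement is vacuous or trivial in that case. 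Everything else is the elementary linear-algebra fact that two degree-one-homogeneous functions on a ray agreeing at a point are equal.
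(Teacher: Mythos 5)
Your argument is correct and is essentially identical to the paper's: the authors likewise observe that both $\lambda_{\hat\xi}$ and $\langle\gamma_Y,\hat\xi\rangle$ are homogeneous of degree one along the ray $\R_{>0}\cdot\hat\xi$ and agree at the distinguished point where $\lambda_{\hat\xi}=n+1$, citing \cite[Proposition 2.5]{VC}, exactly as you do. Your extra verification that $\eta_0^{c\hat\xi}=c^{-1}\eta_0^{\hat\xi}$ while the basic complex and $c_1^{\hat\xi}(\Ds_0,J_0)$ are unchanged along the ray is a welcome spelling-out of the homogeneity step that the paper leaves implicit.
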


\begin{rem} { One can derive a more general relationship between the constants  $\lambda_{\hat \xi} \in \R_{>0}$ and $\gamma_Y \in \hat \tor^*$. To this end, let $\eta:= \eta_0^{\hat \xi}$ be the contact form of $\hat \xi$ on $(N, \Ds_0, J)$ and  \[ \mu^{\eta} : N \to \hat \tor^*, \qquad \langle \mu^{\eta}, a\rangle := \eta(a) \]  the induced contact $\hat \T$-momentum map on $(N, \eta)$.  Denote by $\omega=d\eta$ the transversal K\"ahler form of $(N, \Ds, J, \hat \xi)$ and by  $\rho_{\omega}$ the corresponding transversal Ricci form.
The equality \eqref{Fano-Sasaki} tells us that there exists a unique up to additive constant $\hat\T$-invariant function $h$ on $N$ such that 
\begin{equation}\label{eq:FanoCDTonc_1Basic}
 \rho_{\omega} - \lambda_{\hat \xi} \omega = \frac{1}{2}dd^c_{\hat \xi} h,
\end{equation}
where $d_{\hat \xi}^c$ is the $\hat\xi$-basic $d^c$-operator.
Contracting \eqref{eq:FanoCDTonc_1Basic} with an element of $\tor^*$ then yields 
\begin{equation}\label{eq:comparingMOMENTS}
    \gamma_{Y}= \lambda_{\hat \xi} \mu^{\eta} -\frac{1}{2} \Delta_\omega \mu^{\eta} + \frac{1}{2} d^c_{\hat\xi} h
\end{equation} where  $\Delta_{\omega}$ is the $\hat\xi$-basic Laplace operator associated to the transversal K\"ahler structure $\omega$. (Equivalently, $\Delta_{\omega}$ coincides  with the Riemannian Laplace operator  of $(N, g)$  acting on $\hat \T$-invariant smooth functions, where $g$ is the Sasaki Riemannian metric $g=\eta^2 +d\eta(\cdot,J_0\cdot)$.)} \end{rem}

\begin{ex}[The canonical cone of a Fano manifold]
The condition \eqref{Fano-Sasaki} is clearly fulfilled on the regular Sasaki manifold $(N, \hat \chi, \eta_0^{\hat \chi}, \Ds_0, J_0, \hat \T)$ of Example~\ref{ex:regular}, provided that $X$ is a Fano manifold polarized by its anti-canonical bundle, i.e. $L=K^{-1}_X$. In this case $\lambda_{\hat \chi}=1$. Notice that the corresponding K\"ahler cone is then the canonical cone $Y=K_X^{\times}$ whereas $\Omega$ is the Liouville holomorphic volume form on $K_X$. Furthermore, $\tor \subset \hat \tor$ correspond to the canonical lift of $\Aut(X)$ to $\Aut(K_X)$ and $\hat\Sigma_+$ to the dual of the momentum cone via the identification $\tor \ni \xi \mapsto \langle\xi, \cdot \rangle +1$.
\end{ex}

\subsection{Transversal K\"ahler--Ricci solitons: definition and normalization}


\begin{defn}[see e.g. \cite{FOW}]\label{defnTRS} A (compact) Sasaki manifold $(N, \hat \xi, \eta^{\hat \xi}, \Ds, J, \hat \T)$ is a transversal KRS if there exists a $\hat\tau \in \kt$  such that
\begin{equation}\label{eq:trKRS}
 \rho_{\eta^{\hat \xi}} - \lambda_{\hat\xi} d\eta^{\hat \xi} = \frac{1}{2}dd_{\hat \xi}^c \,\eta^{\hat \xi}(\hat \tau),
\end{equation} where $\rho_{\eta^{\hat \xi}}$ is the transversal Ricci curvature of the Sasaki structure,  and $\lambda_{\hat \xi}>0$ is a positive constant.
\end{defn} 
Some immediate observations are in order. 
\begin{itemize}
 \item[(i)] For any transversal KRS the condition \eqref{Fano-Sasaki} is satisfied and  $\lambda_{\hat \xi} = \langle\gamma_Y,\hat \xi \rangle$ by Lemma~\ref{lem:lambda=gamma}.  
 
 \item[(ii)] If $(\hat \xi, \eta^{\hat \xi}, \Ds, J)$ is a transversal KRS for $\hat \tau \in \kt$ and $\lambda_{\hat\xi}>0$ it is also a transversal KRS for $\hat \tau + c\hat \xi$ and the same constant $\lambda_{\hat \xi}>0$. Thus, there exists a \emph{unique} such $\hat \tau$ satisfying the condition $\hat \tau \in \tor$ see Definition~\ref{dfnNormSubspacesReeb}. We shall denote by $\tau_{\hat \xi}$ this normalized element.
 \item[(iii)] If $(\hat \xi, \eta^{\hat \xi}, \Ds, J)$ is a transversal KRS with respect to $\tau_{\hat \xi} \in \tor$ and $\lambda_{\hat \xi}>0$, then for any $\lambda>0$
 $(\lambda\hat \xi, \eta^{\lambda\hat \xi}, \Ds, J)$ is a transversal KRS satisfying  $\tau_{\lambda \hat \xi} = \tau_{\hat \xi}$ and $\lambda_{\lambda \hat \xi}= \lambda \lambda_{\hat \xi}$. We can then assume $\hat \xi \in \hat \Sigma_{+}$ in which case $\lambda_{\hat \xi}=1$.
 
\item[(iv)] The case when $(\hat \xi, \eta^{\hat \xi}, \Ds, J)$ is a transversal KRS with $\tau_{\hat \xi} =0$ corresponds to a positive Sasaki--Einstein structure (up to scale of $\hat \xi$, see Remark~\ref{remCoeRicci} below) or, equivalently, to a Ricci-flat K\"ahler cone metric $\hat \omega = {\frac{1}{4}} dd^c r^{2}$ on $(Y, \hat \xi)$.
\end{itemize}

\begin{assumption} By (i), (ii) and (iii) above, without loss of generality we may and we shall from now on consider transversal KRS $(N, \hat \xi, \eta^{\hat \xi}, \Ds, J, \hat \T)$ such that $\hat\xi \in \Sigma_+$ and $\tau_{\hat \xi} \in \tor$, see Definition~\ref{dfnNormSubspacesReeb}.  \end{assumption}

Suppose $(\hat \xi, \eta, \Ds, J)$ is a transversal KRS with  $\hat \xi \in \Sigma_+$ and $\tau \in \tor$. In \cite{FOW}, Futaki--Ono--Wang  extended to the Sasaki setting the Tian--Zhou Futaki-type obstruction~\cite{TZ1}, by showing that 
$\tau$ satisfies
\begin{equation}\label{eq:TianZhouFutakiInvariant} \int_{N} \eta(a)\, e^{\eta(\tau)} \eta\wedge (d\eta)^n=0, \qquad \forall a\in \tor. \end{equation}
This follows from \eqref{eq:comparingMOMENTS} where we take $h= \eta(\tau)$ and observe that $$\frac{1}{2}\Delta_{\omega} \eta(a) - \mL_{-J^{\hat \xi} a} \eta(\tau) =  \frac{1}{2}\Delta_{\omega} \eta(a) - \mL_{-J^{\hat \xi }\tau} \eta(a)$$ is self-dual with respect to the volume form $e^{\eta(\tau)} \eta\wedge (d\eta)^n$.

By the arguments of \cite{TZ1,FOW} and \cite{MSY}, we have
\begin{lemma} \label{lem:ExistFut0} Let $(N, \hat \chi, \Ds_0, J_0, \hat \T)$ be a compact Sasaki manifold satisfying \eqref{Fano-Sasaki}. Then
\begin{itemize}
 \item  For any $\hat \xi \in \hat \Sigma_+$ there exists a unique $\tau_{\hat \xi} \in \tor$ such that \eqref{eq:TianZhouFutakiInvariant} holds.
 \item There exists  a unique $\hat \xi \in \hat \Sigma_+$ such that $\tau_{\hat \xi}=0$.
\end{itemize}
\end{lemma}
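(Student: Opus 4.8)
\textbf{Plan of proof for Lemma~\ref{lem:ExistFut0}.}

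The strategy is to reduce the two statements to the analysis of a single strictly convex function on the convex polytope $\hat\Sigma_+$, following the classical scheme of Tian--Zhu~\cite{TZ1} and its Sasaki extension by Futaki--Ono--Wang~\cite{FOW}. First I would fix a transversal K\"ahler form in the basic class $[d\eta_0^{\hat\chi}]_{B,\hat\chi}$ and observe that, by the identification of transversal K\"ahler geometry with the K\"ahler geometry of a (quasi-)regular quotient (Proposition~\ref{sasaki-identification} and the discussion in \S\ref{s:Kahler-cone}), one may pass to an auxiliary Duistermaat--Heckman type measure. Concretely, for $\hat\xi\in\hat\Sigma_+$ and $a\in\tor$, the integral $\int_N \eta^{\hat\xi}(a)\,e^{\eta^{\hat\xi}(\tau)}\,\eta^{\hat\xi}\wedge(d\eta^{\hat\xi})^n$ is the derivative at $\tau$ of the function
\[
G_{\hat\xi}(\zeta):=\int_N e^{\eta^{\hat\xi}(\zeta)}\,\eta^{\hat\xi}\wedge(d\eta^{\hat\xi})^n,\qquad \zeta\in\tor,
\]
so equation~\eqref{eq:TianZhouFutakiInvariant} says precisely that $\tau=\tau_{\hat\xi}$ is a critical point of $G_{\hat\xi}$ on $\tor$. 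The point is that $G_{\hat\xi}$, being the integral of an exponential of a momentum-type function against a positive measure, is strictly convex and proper on $\tor$ once one knows that the support of the pushed-forward measure on $\tor^*$ spans (which follows from $\hat\xi$ being an interior point of the Reeb cone and $\tor$ being a genuine complement of $\gamma_Y$ in $\hat\tor$). This gives existence and uniqueness of $\tau_{\hat\xi}$, i.e. the first bullet. For the smoothness/continuity in $\hat\xi$ I would invoke the implicit function theorem applied to the gradient of $G_{\hat\xi}$, using that the Hessian is non-degenerate.

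For the second bullet I would then vary $\hat\xi$ over $\hat\Sigma_+$ and set up a ``master'' functional whose Euler--Lagrange equation forces simultaneously $\tau_{\hat\xi}=0$. The natural candidate, exactly as in \cite{MSY} and \cite{TZ1}, is the weighted volume functional
\[
\mathrm{Vol}(\hat\xi):=\int_N \eta^{\hat\xi}\wedge(d\eta^{\hat\xi})^n
\]
(suitably normalized), or rather a transversally-twisted version of it; one checks that its critical point over the affine slice $\hat\Sigma_+$ is characterized by the vanishing of the relevant linearized obstruction, which after a computation matches the condition that the canonically normalized $\tau_{\hat\xi}$ equals $0$. Strict convexity of $\mathrm{Vol}$ (a theorem of Martelli--Sparks--Yau) together with properness on the polytope $\hat\Sigma_+$ yields a unique minimizer, hence a unique $\hat\xi$ with $\tau_{\hat\xi}=0$. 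I would present this by first establishing that $\hat\xi\mapsto\tau_{\hat\xi}$ is a continuous (indeed smooth) map, then showing it is, up to an explicit positive factor, the gradient of a strictly convex proper function on $\hat\Sigma_+$, so that it has exactly one zero.

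The main obstacle I anticipate is bookkeeping the normalization: one must carefully track how the constant $\lambda_{\hat\xi}$, the volume form $\Omega$ and the contact momentum map $\mu^\eta$ all rescale when $\hat\xi$ is moved, so that the two normalizations ``$\hat\xi\in\hat\Sigma_+$'' (i.e. $\langle\gamma_Y,\hat\xi\rangle=1$) and ``$\tau_{\hat\xi}\in\tor$'' (i.e. $\langle\gamma_Y,\tau_{\hat\xi}\rangle=0$) are mutually compatible and actually single out the right critical point. This is precisely the content of \eqref{eq:comparingMOMENTS}, and the identity there, contracted against elements of $\tor$, is what guarantees that the critical point of $\mathrm{Vol}$ on $\hat\Sigma_+$ produces $\tau_{\hat\xi}=0$ rather than some nonzero multiple of $\hat\xi$. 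The remaining steps --- strict convexity and properness of $G_{\hat\xi}$ and of $\mathrm{Vol}$ --- are by now standard and I would simply cite \cite{TZ1,FOW,MSY}, indicating the minor modifications needed in the transversal setting.
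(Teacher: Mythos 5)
Your proposal follows essentially the same route as the paper: the first bullet is obtained from the strict convexity and properness of the functional $V_{\hat\xi}(a)=\int_N e^{\eta(a)}\,\eta\wedge(d\eta)^n$ on $\tor$ (your $G_{\hat\xi}$), and the second from the Martelli--Sparks--Yau volume minimization on $\hat\Sigma_+$, whose first variation in directions $a\in\tor$ is proportional to $\int_N\eta(a)\,\eta\wedge(d\eta)^n$, so that its unique critical point is exactly the $\hat\xi$ with $\tau_{\hat\xi}=0$. The only imprecision is the claim that $\hat\xi\mapsto\tau_{\hat\xi}$ is ``up to a positive factor'' the gradient of $\mathrm{Vol}$ --- it merely has the same zero set, since $\tau_{\hat\xi}=0$ iff the gradient of $V_{\hat\xi}$ at $0$ vanishes iff $d\,\mathrm{Vol}(\hat\xi)\restr{\tor}=0$ --- but this does not affect the argument.
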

\begin{proof}
For any $\hat \xi \in \hat \Sigma_+$, pick $\eta=\eta^{\hat \xi}_{\varphi}$ for $ \varphi \in \Xi(\hat\xi,\eta_0,J^{\hat \xi})^{\hat \T}$ the functional $V_{\hat \xi}: \tor \ra \R$ defined by
\begin{equation}\label{eq:TianZhouFutakiInvariant_funct}
 V_{\hat \xi}(a):= \int_{N} \, e^{\eta(a)} \eta\wedge (d\eta)^n, \quad a \in \tor,
\end{equation} is convex and does not depend on the choice of $\eta$ within $\Xi(\hat \xi,\eta_0^{\hat \xi},J^{\hat \xi})^{\hat \T}$. 
Therefore, there exists a unique $\tau \in \tor$, the critical point of $V_\xi$, satisfying \eqref{eq:TianZhouFutakiInvariant}.

For the second part, by \cite{MSY}, there exists a unique $\hat \xi_0 \in \Sigma_+$ such that the corresponding contact $1$-form $\eta_0$ satisfies
\[ \int_N \eta_0(a) \eta_0 \wedge (d\eta_0)^n =0 \qquad  \forall a\in \tor.\]
This in turn is equivalent to \eqref{eq:TianZhouFutakiInvariant} for $\hat \xi \in \Sigma_+$ and $\tau_{\hat \xi} =0$.
\end{proof}
Similarly, we can introduce transversal KRS K\"ahler cones.
\begin{defn} We say that 
a polarized K\"ahler cone $(Y,J,\hat\xi)$ admits a \emph{compatible transversal KRS cone K\"ahler metric} $\hat \omega = \frac{1}{4}dd^c r^2$ with $r\in \mathcal{R}_{\hat{\xi}}(Y)$ 
if the associated Sasaki structure on $N= r^{-1}(1)$ (see \S\ref{s:Kahler-cone}) is a transversal KRS. 
\end{defn}

\begin{rem}\label{remCoeRicci} { A polarized K\"ahler cone $(Y,J, \hat \xi, \hat \omega)$ with  $\hat\omega = \frac{1}{4}dd^cr^2$  for a radial potential function  $r\in\mathcal{R}_{\hat\xi}(Y)$ is K\"ahler Ricci-flat if and only if 
the associated Sasaki manifold $(N, \hat \xi, \eta^{\hat \xi}, \Ds, J, \hat \T)$ (see \S\ref{s:Kahler-cone}) is transversally K\"ahler--Einstein with transversal scalar curvature equal to $2n(n+1)$.
To see this,  let $\omega=d\eta^{\hat \xi}$ and $\rho_{\omega}$  denote the corresponding transversal K\"ahler and Ricci forms on the Sasaki manifold $N=r^{-1}(1)$. Then we have 
(see e.g.~\cite{MSY})  \begin{equation}\label{eqConeRiccivsSasaki}\rho_{\hat{\omega}} = \rho_{\omega} -(n+1)\omega.\end{equation}
 } \end{rem}

\subsection{Transversal KRS as $v$-solitons} 
We start with a technical result obtained in the following general set up: $(N, \Ds, J)$ is a fixed strictly pseudo-convex CR manifold and $(\hat\xi, \hat\chi)$ are commuting Sasaki--Reeb vector fields.
We denote by $\omega_{\hat \xi}:= d\eta^{\hat{\xi}}$ and $\omega_{\hat \chi}:= d\eta^{\hat \chi}$ the corresponding transversal K\"ahler forms and by $g_{\hat \xi}$ and $g_{\hat\chi}$ the corresponding transversal Riemannian metrics. We further suppose that $(N, \Ds, J, \hat \xi)$ (with $\hat \xi \in \hat \Sigma_+$) is transversally Fano, i.e. the transversal Ricci form $ \rho_{\omega_{\hat \xi}}$ satisfies (see \eqref{eq:FanoCDTonc_1Basic})
\[ \rho_{\omega_{\hat\xi}} -  \omega_{\hat\xi} = \frac{1}{2} dd^c_{\hat\xi} h, \]
where $h$ is a $(\hat \xi, \hat\chi)$-invariant smooth function.
\begin{lemma} Let  $\lambda_{\hat\chi}$ be the constant introduced in \S \ref{sssectgammalambda}. Then the transversal Ricci form $\rho_{\omega_{\hat \chi}}$ of the transversal K\"ahler structure $(g_{\hat\chi}, \omega_{\hat \chi})$ satisfies
\[\rho_{\omega_{\hat \chi}} - \lambda_{\hat \chi} \omega_{\hat \chi} = \frac{1}{2} dd^c_{\hat\chi}  h - \frac{(n+2)}{2} dd^c_{\hat \chi} \log f, \qquad f:=\eta^{\hat\chi}(\hat\xi). \]
\end{lemma}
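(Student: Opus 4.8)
The plan is to compare the two transversal K\"ahler structures $(g_{\hat\xi},\omega_{\hat\xi})$ and $(g_{\hat\chi},\omega_{\hat\chi})$ by relating their transversal volume forms. Recall that on a Sasaki manifold the transversal K\"ahler form of $\hat\xi$ is $\omega_{\hat\xi}=d\eta^{\hat\xi}$ and $\eta^{\hat\xi}\wedge(d\eta^{\hat\xi})^n$ is (up to the universal constant $n!$) the Sasaki Riemannian volume form of the metric $g=\left(\eta^{\hat\xi}\right)^2+\omega_{\hat\xi}(\cdot,J_0\cdot)$; the analogous statement holds for $\hat\chi$. The key elementary computation is the change-of-contact-form identity: with $f:=\eta^{\hat\chi}(\hat\xi)$, one has $\eta^{\hat\xi}=\frac{1}{f}\eta^{\hat\chi}$ on the common distribution of basic forms, so that the transversal volume forms satisfy $\left(\omega_{\hat\chi}\right)^{[n]}=f^{\,n+1}\left(\omega_{\hat\xi}\right)^{[n]}$ at the level of top transversal forms — or, more precisely, the density relating the two transversal volume forms is a power of $f$. (This is exactly the kind of identity recorded in \cite{ACL,ApostolovCalderbankGauduchon}; the precise exponent is what one must pin down, and the statement as written asserts it produces the coefficient $n+2$ after passing to Ricci forms.) First I would write both Ricci forms via the $\partial\bar\partial$-lemma for basic forms: $\rho_{\omega_{\hat\xi}}=-\frac{1}{2}dd^c_{\hat\xi}\log\left(\omega_{\hat\xi}\right)^{[n]}$ in a local transversal holomorphic frame, and likewise for $\hat\chi$, so that
\[
\rho_{\omega_{\hat\chi}}-\rho_{\omega_{\hat\xi}} \;=\; -\tfrac{1}{2}\,dd^c_{\hat\chi}\log\!\left(\frac{\left(\omega_{\hat\chi}\right)^{[n]}}{\left(\omega_{\hat\xi}\right)^{[n]}}\right)\;+\;\bigl(dd^c_{\hat\chi}-dd^c_{\hat\xi}\bigr)\bigl(\cdots\bigr),
\]
and then substitute the transversally Fano equation $\rho_{\omega_{\hat\xi}}=\omega_{\hat\xi}+\tfrac12 dd^c_{\hat\xi}h$ for $\rho_{\omega_{\hat\xi}}$.

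The second ingredient is to handle the mismatch between $dd^c_{\hat\xi}$ and $dd^c_{\hat\chi}$ acting on basic functions, together with the relation $\lambda_{\hat\chi}\omega_{\hat\chi}$ versus $\omega_{\hat\xi}$. Here I would use the cone picture of \S\ref{s:Kahler-cone}: lift everything to $Y$, where $\omega_{\hat\xi}$ and $\omega_{\hat\chi}$ are (restrictions of) the cone K\"ahler forms $\hat\omega_{r_{\hat\xi}}=\tfrac14 dd^c r_{\hat\xi}^2$ and $\hat\omega_{r_{\hat\chi}}=\tfrac14 dd^c r_{\hat\chi}^2$, the two radial potentials are related by a homogeneity-one rescaling governed by $f$, and the holomorphic volume form $\Omega$ with $\mathcal L_{-J\hat\chi}\Omega=\lambda_{\hat\chi}\Omega$ (using Lemma~\ref{lem:lambda=gamma}) gives a canonical trivialization in which the Ricci form of the cone metric is computed as $dd^c$ of $\log$ of the ratio of $\hat\omega_r^{[n+1]}$ to $i^{(n+1)^2}\Omega\wedge\bar\Omega$. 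Writing this ratio for both $\hat\xi$ and $\hat\chi$, dividing, and using that the ratio of cone volume forms is $f$ to a computable power (the $(n+1)$ from the top cone form, corrected by the differing weights of $-J\hat\xi$ versus $-J\hat\chi$ on $\Omega$, which is where the transition from $n+1$ to $n+2$ enters) should produce exactly $\rho_{\omega_{\hat\chi}}-\lambda_{\hat\chi}\omega_{\hat\chi}=\tfrac12 dd^c_{\hat\chi}h-\tfrac{n+2}{2}dd^c_{\hat\chi}\log f$ after the basic $dd^c$ operators are reconciled.

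The main obstacle I anticipate is bookkeeping the exponent precisely: getting $n+2$ rather than $n+1$ or $n+3$ requires being careful about (a) the passage from the $(n+1)$-dimensional cone volume form to the $n$-dimensional transversal volume form, which trades one factor, (b) the fact that $dd^c_{\hat\chi}$ of an $\hat\xi$-basic but not $\hat\chi$-basic function differs from $dd^c_{\hat\xi}$ of it by a term involving $d\log f$, and (c) the normalization $\hat\xi\in\hat\Sigma_+$ so $\lambda_{\hat\xi}=1$. A clean way to avoid sign and factor errors is to first prove the volume-form identity $\left(\omega_{\hat\chi}\right)^{[n]} = f^{-(n+2)}\,e^{(\text{correction})}\left(\omega_{\hat\xi}\right)^{[n]}$ on $N$ by a direct linear-algebra computation at a point in an adapted frame where $\hat\chi,\hat\xi$ and $\Ds$ are simultaneously in normal form, then feed it into the two Ricci-form expressions and simplify using $\rho_{\omega_{\hat\xi}}-\omega_{\hat\xi}=\tfrac12 dd^c_{\hat\xi}h$ and the commutation $dd^c_{\hat\chi}=dd^c_{\hat\xi}$ on functions that are both $\hat\xi$- and $\hat\chi$-basic. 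I expect the remaining steps to be routine once that exponent is nailed down.
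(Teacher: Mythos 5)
Your overall strategy (compare the two transversal volume forms, write each transversal Ricci form as $-\tfrac12 dd^c\log$ of a volume density, and identify the constant via the holomorphic volume form) is a legitimate alternative to what the paper does, but as written the proposal has a genuine gap: the entire content of the lemma is the precise coefficient $\tfrac{n+2}{2}$ in front of $dd^c_{\hat\chi}\log f$ and the identification of the constant as $\lambda_{\hat\chi}$, and you establish neither. You explicitly leave the exponent open (``the precise exponent is what one must pin down''), and in fact you record two mutually inconsistent candidate volume relations, $(\omega_{\hat\chi})^{[n]}=f^{\,n+1}(\omega_{\hat\xi})^{[n]}$ early on and $(\omega_{\hat\chi})^{[n]}=f^{-(n+2)}e^{(\mathrm{correction})}(\omega_{\hat\xi})^{[n]}$ at the end. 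The discrepancy between $n+1$ and $n+2$ is not cosmetic: the $f^{\,n+1}$ comes from $\eta^{\hat\xi}\wedge(d\eta^{\hat\xi})^n$ versus $\eta^{\hat\chi}\wedge(d\eta^{\hat\chi})^n$, while the extra factor of $f$ promoting $n+1$ to $n+2$ comes from the change of transversal holomorphic trivialization between the two foliations (equivalently, from the $d(\cdots\,\eta^{\hat\chi})$ correction terms). A proof must actually carry out this bookkeeping; deferring it means the lemma is not proved.

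There is also a concrete incorrect step: you propose to finish by using ``the commutation $dd^c_{\hat\chi}=dd^c_{\hat\xi}$ on functions that are both $\hat\xi$- and $\hat\chi$-basic.'' This is false. Even for a $\hat\T$-invariant function $h$ with $dh(\hat\xi)=dh(\hat\chi)=0$, one has (using $\hat\xi=f\hat\chi-\omega_{\hat\chi}^{-1}(df)$)
\[
d^c_{\hat\xi}h=d^c_{\hat\chi}h-\langle dh,df\rangle_{g_{\hat\chi}}\,\eta^{\hat\chi},
\]
so $dd^c_{\hat\xi}h$ and $dd^c_{\hat\chi}h$ differ by $d\big(\langle dh,df\rangle_{g_{\hat\chi}}\eta^{\hat\chi}\big)$, which is a nontrivial $2$-form in general. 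Tracking exactly such $\eta^{\hat\chi}$-multiples is where the constant $\lambda_{\hat\chi}$ comes from. For comparison, the paper's route is shorter: it quotes the known CR identity
\[
\rho_{\omega_{\hat\xi}}-\rho_{\omega_{\hat\chi}}=\tfrac{n+2}{2}\,dd^c_{\hat\chi}\log f-\tfrac12\,d\Big(\big(\tfrac{\Delta_{g_{\hat\chi}}f}{f}+(n+2)\tfrac{|df|^2_{g_{\hat\chi}}}{f^2}\big)\eta^{\hat\chi}\Big)
\]
(which already contains the $n+2$), substitutes the transversally Fano equation for $\rho_{\omega_{\hat\xi}}$ together with the displayed formula for $d^c_{\hat\xi}h$, observes that the resulting coefficient of $\eta^{\hat\chi}$ must be constant by contracting with $\hat\chi$, and then identifies that constant as $\lambda_{\hat\chi}$ by positivity (at a maximum of $f$) and by comparing basic cohomology classes. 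If you want to pursue your cone-volume route, you must (a) prove the volume identity with the correct exponent in an adapted frame, (b) correctly account for the $d^c_{\hat\xi}$ versus $d^c_{\hat\chi}$ discrepancy, and (c) supply the cohomological or maximum-principle argument pinning the constant to $\lambda_{\hat\chi}$.
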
 
\begin{proof} Let $f:= \eta^{\hat\chi}(\hat\xi)$ be the positive Killing potential of $(g_{\hat\chi}, \omega_{\hat\chi})$. We use the following general relation between the transversal Ricci curvatures (see e.g. \cite{JL}):
\begin{equation}\label{Ricci-CR}
 \rho_{\omega_{\hat\xi}} - \rho_{\omega_{\hat\chi}}  = \frac{(n+2)}{2} dd^c_{\hat\chi} \log f - \frac{1}{2}d\left(\Big(\frac{\Delta_{g_{\hat\chi}} f}{f} +(n+2) \frac{|df|^2_{g_{\hat\chi}}}{f^2}\Big)\eta^{\hat\chi}\right). \end{equation}
Furthermore, using that $h$ is $\hat\chi$-invariant, $\eta^{\hat\chi} = f \eta^{\hat\xi}$,  and  (see \cite[Lemma 1]{AC}) 
\[\hat\xi = f \hat\chi  - \omega_{\hat\chi}^{-1}(df_{|_{\Ds}}), \]
we get
\begin{equation}\label{dc}
 d^c_{\hat\xi} h   = d^c_{\hat\chi} h  - \left\langle dh, df \right\rangle_{g_{\hat\chi}} \eta^{\hat\chi}. \end{equation}
 Finally, using \eqref{Ricci-CR}, \eqref{dc} and
 \[\omega_{\hat\xi} = d\eta^{\hat\xi}, \qquad \omega^{\hat\chi} = d\eta^{\hat\chi}, \qquad \eta_{\hat\xi}= f \eta^{\hat\chi}, \]
 we get
 \[
 \begin{split}
0  = & \rho_{\omega_{\hat\xi}} - \omega_{\hat\xi} - \frac{1}{2} dd^c_{\hat\xi} h  \\
     =  & \rho_{\omega_{\hat\chi}} + \frac{(n+2)}{2} dd^c_{\hat\chi} \log f  - \frac{1}{2}dd^c_{\hat\chi} h  \\
          & -d\left[f + \frac{1}{2}\left(\frac{\Delta_{g_{\hat\chi}} f}{f} +(n+2) \frac{|df|^2_{g_{\hat\chi}}}{f^2}\right) - \frac{1}{2}\left\langle dh, df \right\rangle_{g_{\hat\chi}}\right] \eta^{\hat\chi}
          \end{split} \]
Evaluating the latter on the vector field $\hat\chi$, we get that
  \[  \left[f + \frac{1}{2}\left(\frac{\Delta_{g_{\hat\chi}} f}{f} +(n+2) \frac{|df|^2_{g_{\hat\chi}}}{f^2}\right) - \frac{1}{2}\left\langle dh, df \right\rangle_{g_{\hat\chi}}\right]=\lambda\]
  is a constant. The first claim follows easily from the above.  This constant must be positive when $N$ is compact (which follows by considering a point of maximum of $f>0$) and is then also determined from the basic cohomology of $\hat \chi$ by the relation $[c_1(\Ds)]_{B,\hat\chi} = \lambda [\omega_{\hat\chi}]$. Thus, $\lambda =\lambda_{\hat \chi}$, see \S \ref{sssectgammalambda}.  \end{proof}
  We thus get the following generalization of \cite[Prop.2]{AJL}.
\begin{cor}\label{c:v-TKRS} Suppose $\kt$ is a maximal abelian subalgebra of $\mathfrak{cr}(N, J, \Ds)$,  and $\hat\xi, \hat \chi \in \kt$. Then $(\hat \xi, \Ds, J)$ is a transversal KRS  with soliton vector  $\tau_{\hat \xi} \in \tor$  iff $(\hat \chi, \Ds, J)$ is a transversal $v_{\hat \xi}$-soliton for the weigh function
\begin{equation}\label{eq:tKRSweight}
 v_{\hat \xi} : = e^{\big(\frac{\eta^{\hat \chi}( \tau_{\hat \xi})}{\eta^{\hat \chi}(\hat \xi)}\big)} \left(\eta^{\hat \chi}(\hat \xi)\right)^{-(n+2)}. 
\end{equation}
\end{cor}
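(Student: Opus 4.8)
The plan is to unwind the two Sasaki equations via the previous lemma and the definition of a transversal $v$-soliton. Recall (Definition~\ref{defnTRS}) that $(\hat\xi,\Ds,J)$ being a transversal KRS with soliton vector $\tau_{\hat\xi}\in\tor$ means
\[
\rho_{\omega_{\hat\xi}} - \lambda_{\hat\xi}\,\omega_{\hat\xi} = \tfrac12\,dd^c_{\hat\xi}\,\eta^{\hat\xi}(\tau_{\hat\xi}),
\]
and since $\hat\xi\in\hat\Sigma_+$ we have $\lambda_{\hat\xi}=1$ by Lemma~\ref{lem:lambda=gamma}. Thus the hypothesis of the preceding Lemma is met with $h=\eta^{\hat\xi}(\tau_{\hat\xi})$, and I would feed this particular $h$ into the conclusion of that Lemma to obtain
\[
\rho_{\omega_{\hat\chi}} - \lambda_{\hat\chi}\,\omega_{\hat\chi} = \tfrac12\,dd^c_{\hat\chi}\Big(\eta^{\hat\xi}(\tau_{\hat\xi})\Big) - \tfrac{(n+2)}{2}\,dd^c_{\hat\chi}\log f, \qquad f=\eta^{\hat\chi}(\hat\xi).
\]

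The key computational step is then to rewrite $\eta^{\hat\xi}(\tau_{\hat\xi})$ in $\hat\chi$-data. Using $\eta^{\hat\xi} = f^{-1}\eta^{\hat\chi}$ (equivalently $\eta^{\hat\chi}=f\,\eta^{\hat\xi}$, already used in the Lemma's proof) gives
\[
\eta^{\hat\xi}(\tau_{\hat\xi}) = \frac{\eta^{\hat\chi}(\tau_{\hat\xi})}{\eta^{\hat\chi}(\hat\xi)} = \frac{\eta^{\hat\chi}(\tau_{\hat\xi})}{f}.
\]
Substituting, the right-hand side becomes $\tfrac12\,dd^c_{\hat\chi}\big(\log v_{\hat\xi}\big)$ with
\[
\log v_{\hat\xi} = \frac{\eta^{\hat\chi}(\tau_{\hat\xi})}{\eta^{\hat\chi}(\hat\xi)} - (n+2)\log\big(\eta^{\hat\chi}(\hat\xi)\big),
\]
i.e. exactly \eqref{eq:tKRSweight}. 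So the equation reads $\rho_{\omega_{\hat\chi}} - \lambda_{\hat\chi}\,\omega_{\hat\chi} = \tfrac12\,dd^c_{\hat\chi}\log v_{\hat\xi}$, which is precisely the transversal $v_{\hat\xi}$-soliton equation (after noting $\lambda_{\hat\chi}$ is absorbed by the normalization built into the momentum-polytope conventions, or simply stating the soliton equation relative to the transversally Fano constant $\lambda_{\hat\chi}$). The converse direction is obtained by running the same chain of identities backwards, since every manipulation — the Lemma, the substitution $\eta^{\hat\chi}=f\eta^{\hat\xi}$, and the rewriting of $\log v_{\hat\xi}$ — is reversible; one only needs that $\hat\chi\in\hat\Sigma_+$ as well, or more precisely that $\lambda_{\hat\chi}$ and the weight normalization are matched, which is where the maximal-abelian hypothesis on $\kt$ and Lemma~\ref{lem:ExistFut0} guarantee that the soliton vector $\tau_{\hat\xi}$ produced is the canonically normalized one.

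The main subtlety — not really an obstacle, but the point requiring care — is matching normalizations: the constant $\lambda_{\hat\chi}$ appearing on the left, the scaling freedom in $v_{\hat\xi}$, and the requirement $\tau_{\hat\xi}\in\tor$ (i.e. $\langle\gamma_Y,\tau_{\hat\xi}\rangle=0$) all have to be reconciled so that the statement is an honest "iff" rather than an "iff up to scale." I would handle this by invoking observation (ii) after Definition~\ref{defnTRS} (uniqueness of the normalized $\tau_{\hat\xi}$ in $\tor$) together with the identification $\tor\ni\xi\mapsto\langle\xi,\cdot\rangle+1$ of $\hat\Sigma_+$ with the dual of the momentum cone, which makes $\eta^{\hat\chi}(\hat\xi)$ the affine function $1+\langle\xi,\cdot\rangle$ on the quotient's polytope and hence identifies $v_{\hat\xi}$ with a genuine weight of the form $\ell_1^{-(n+2)}e^{\ell_2/\ell_1}$; at that point Corollary~\ref{c:v-TKRS} is just the transversal/weighted restatement of \cite[Prop.~2]{AJL}, and the proof is complete.
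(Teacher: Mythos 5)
Your proposal is correct and follows exactly the route the paper intends: the corollary is stated as an immediate consequence of the preceding lemma, obtained by taking $h=\eta^{\hat\xi}(\tau_{\hat\xi})$ in the transversal KRS equation (with $\lambda_{\hat\xi}=1$ since $\hat\xi\in\hat\Sigma_+$), applying the lemma, and rewriting $\eta^{\hat\xi}(\tau_{\hat\xi})=\eta^{\hat\chi}(\tau_{\hat\xi})/\eta^{\hat\chi}(\hat\xi)$ via $\eta^{\hat\chi}=f\,\eta^{\hat\xi}$ so that the right-hand side becomes $\tfrac12 dd^c_{\hat\chi}\log v_{\hat\xi}$. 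Your remarks on the normalization of $\tau_{\hat\xi}\in\tor$ and on reversibility of the argument are exactly the points the paper leaves implicit.
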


\subsection{Transversal KRS's from a fixed regular quotient} By virtue of Corollary~\ref{c:v-TKRS}, 
we now suppose $(X, \T)$ is a Fano manifold with $\T\subset {\rm Aut}_{r}(X)$ a maximal torus with canonically normalized momentum polytope $\Pol_X$.   Any $\T$-invariant K\"ahler metric $\omega\in 2\pi c_1(X)$ can be viewed as the reduction of a Sasaki manifold $(N, J, \Ds)$ by a regular Sasaki Reeb vector field $\hat \chi \in \kt$, where $\hat T= \Sph^1_{\hat \chi}\times \T$ is defined  on $N$ via the natural lift of $\T$ to $K_X$. Furthermore, the Lie algebra $\tor = {\rm Lie}(\T)$ is naturally identified with the  subspace $\tor \subset \kt$  defined in Definition~\ref{dfnNormSubspacesReeb}. 

The polytope $\Sigma_+ \subset \hat \tor_+$ can be equivalently characterized in terms of $(X, \T, \omega)$ as the set of elements $\zeta\in\mathfrak{t}$ such that the affine linear function $\ell_\zeta(x):=\langle x,\zeta\rangle+1$ is strictly positive on $\Pol_X$. Indeed, any affine-linear function $\ell_{\zeta}$ as above gives rise to an element  $\hat \zeta \in \Sigma_+$ given by the horizontal lift of $\zeta$ to $\Ds$ plus $\ell_{\zeta}(\mu_{\omega})\hat \chi$; conversely,  the quantity $\eta_{\hat \chi}(\hat \xi)$ defines an affine-linear function on $\Pol_X$ of the form $\langle \zeta, x \rangle +1$. Thus, we have an identification of $\Sigma_+$ with the dual (open) polytope $\mathring{\Pol}_X^*$ of $\Pol_X$:
\begin{equation}\label{eq:dual-pol} \Sigma_+ \cong \mathring{\Pol}_X^*,  \qquad \ \mathring{\Pol}_X^*:=\left\{ \zeta \in \tor \, | \, \ell_{\zeta}(x):=\langle \zeta, x \rangle + 1 >0  \, \, \forall x \in \Pol_X\right\}.\end{equation}
By Lemma~\ref{lem:ExistFut0}, there exists a {smooth} function 
\[ \tau: \mathring{\Pol}_X^* \to\tor\] 
such that
\begin{equation}\label{l:Fut=0} \forall \xi\in \mathring{\Pol}_X^*, \, \,  \, v_\xi:=\exp\left(\frac{\langle\mu,\tau(\xi)\rangle}{\ell_\xi}\right)(\ell_\xi)^{-(n+2)} \, \,  \textrm{satisfies} \,  \, {\rm Fut}_{v_\xi}\equiv 0, \end{equation}
where $\Fut_v$ is the weighted Futaki invariant defined in \eqref{v-Futaki}.

\begin{rem}\label{r:SE} A special case of Lemma~\ref{lem:ExistFut0} appears when $\xi_0 \in \mathring{\Pol}_X^*$ is such that $\tau(\xi_0)=0$. Equivalently,  $v_{\xi_0} = \ell_{\xi_0}^{-(n+2)}$ where $\xi_0$ is uniquely determined (see \cite{MSY}) by the property  ${\Fut}_{\ell_{\xi_0}^{-(n+2)}}\equiv 0$ or,  equivalently, 
\[
(dV_{\xi_0})_0 (\dot{\tau})=\int_X \langle\mu_\omega,\dot{\tau}\rangle \frac{\omega^{[n]}}{\ell_{\xi_0}(\mu_\omega)^{n+2}}={\rm Fut}_{\ell_{\xi_0}^{-(n+2)}}(\dot{\tau})= 0.
\] 
The corresponding $v_{\xi_0}$-soliton (if it exists) then corresponds, up to a transversal homothety (see Remark~\ref{remCoeRicci}), to a Calabi--Yau K\"ahler cone structure on $K_X^{\times}$, compare with \cite[Prop.2]{AJL}
\end{rem}

\subsection{From K\"ahler--Ricci solitons  to Calabi--Yau cones: proof on Theorem~\ref{thm:A}}

\begin{proof}[Proof of Theorem~\ref{thm:A}]  Let $(X, \T)$ be a smooth Fano variety admitting a KRS. We consider the sequence of rational functions on $\tor\times \tor^*$ 
\[ q_N(\xi, x):=\left(1-\frac{\langle x,\xi\rangle}{N}\right)^{-N}, \] which for $N>>1$  is well-defined and converges uniformly on a compact subset of $\tor \times \tor^*$ to the function $e^{\langle\mu,\xi\rangle}$.

Let $\mathring{\Pol}_X^* \subset \tor$ denote the dual  (open) polytope of $\Pol_X$, see \eqref{eq:dual-pol}. Thus, for any $\xi \in -N{\mathring{\Pol}_X^*}$, $q_N(\xi, x)$ is a positive function on $\Pol_X$ and we consider the volume functional
\[V_N (\xi) : = \int_X \left(1-\frac{\langle \mu_{\omega},\xi\rangle}{N}\right)^{-N+1} \omega^{[n]}, \qquad \omega \in {\mathcal K}^{\T}_{2\pi c_1(X)}\]
which is well-defined, convex and proper on {$-N\mathring{\Pol}^*_X$}. Let $\xi_N \in -N\mathring{\Pol}^*_X$ be the unique minimizer of $V_N$: $\xi_N$ then satisfies 
\begin{equation} \label{N-Futaki}\int_X \mu^{\zeta}_{\omega}\left(1-\frac{\langle \mu_{\omega},\xi_N\rangle}{N}\right)^{-N} \omega^{[n]}=0, \qquad \forall \zeta \in \tor, \, \forall \omega \in {\mathcal K}^{\T}_{2\pi c_1(X)}.\end{equation}
i.e.  $v_N(x):= q_N(\xi_N, x)>0$ on $\Pol_X$ and $\Fut_{v_N}(\zeta) =0$ for any $\zeta \in \tor$.

\smallskip
We now show that if $\tau\in \tor$ is the Tian--Zhu KRS vector field (note that $(X, \T)$ admits an $e^{\langle \tau, x\rangle}$-soliton by the hypothesis) then 
\[\lim_{N\to \infty} \xi_N = \tau.\]
To see this, we can use  the implicit function theorem. Indeed, consider the function 
\[F(a,t):= e^{\left(\frac{a+1}{a}\right)\log(1+at)}, \qquad (a,t)\in \R^2: \,  \, \, 0<1+at< 2, \]
which admits an analytic extension at $a=0$ with $F(0,t)=e^t$. Notice that $F(-\frac{1}{N}, t)=(1 -\frac{t}{N})^{-N+1}$ and,  for any fixed $a$, the function $t\to F(a, t)$ is strictly convex  as soon as $a>-1$. Consider the function
\[
W(a, \xi) := \int_X F(a, \langle \xi, \mu_{\omega}\rangle) \omega^{[n]},\]
defined on the domain
\[U :=\left\{(a, \xi) \in \R \times \tor \, | \, a>-1, \, \,  -1<a\langle \xi, x \rangle < 1  \, \, \forall x \in \Pol_X\right\}. \]
By the properties of $F(a,t)$ mentioned above,  $W(-\frac{1}{N}, \xi)= V_N(\xi)$ and, for any fixed $a>-1$,  the map $\xi \to W(a, \xi)$ is strictly convex on the domain \[ U_a:=\{ \xi \in \tor \, \,  | \, \, -1<a\langle \xi, x \rangle < 1\}.\]
Applying the implicit function theorem to 
\[ \Psi : U \to \tor^*, \qquad \Psi(a, \xi):= (d_{\xi} W)(a, \xi), \]
there exists a smooth path $(a,\tau_a)\in U$, defined for $|a|<\varepsilon$, such that $\Psi(a,\tau_a)=0$ and $\tau_0 =\tau$. Clearly, for $N>>1$, $\xi_N = \tau_{-\frac{1}{N}}$ by the uniqueness of the critical point, and whence $\lim_{N\to \infty} \xi_N = \tau$.

\smallskip
Using $\lim_{N\to \infty} \xi_N = \tau$, we have $\lim_{N\to \infty} v_N(x) = e^{\langle \tau, x \rangle}$ in $C^{0}(\Pol_X)$.  As $X$ admits an $e^{\langle \tau, x \rangle}$-soliton by assumption,   and $v_N(x)$ tends uniformly on $\Pol_X$ to  $e^{\langle \tau, x \rangle}$ satisfying $\Fut_{v_N}=0$, we can apply Theorems~\ref{thm:openedness}  and \ref{thm:HL} to conclude that there are $v_N$-solitons for any $N>>1$. Alternatively, we can appeal to Corollary~\ref{c:Lebrun-Simanca} applied to the smooth family of weights $v_a(x):= e^{\frac{1}{a}{\log}(1 + a\langle \tau_a, x\rangle)}\in \mathcal{F}(X)$.

Taking the product of a $v_N$-soliton on $X$ with the Fubini--Study metric on ${\mathbb P}_\C^{N-n-2}$ gives rise to a $v_N$-soliton 
on $Z=X \times \mathbb{P}_{\C}^{N-n -2}$. As {$Z$} is $(N-2)$-dimensional,  this in turn defines a Calabi--Yau cone structure on $K_Z^{\times}$ by Remark~\ref{r:SE}.
\end{proof}

\subsection{Opennes of transversal KRS: Proof of Theorem~\ref{thm:main2}}
\begin{proof}[Proof of Theorem~\ref{thm:main2}] We consider the smooth family of weights $v_{\xi}$, $\xi \in \mathring{\Pol}_X^*$ defined by \eqref{l:Fut=0}. By Corollary~\ref{c:v-TKRS}, Proposition~\ref{sasaki-identification} and Remark~\ref{r:sasaki-identification}, the existence of a $\T$-invariant $v_{\xi}$-soliton in $2\pi c_1(X)$ is equivalent with the existence of a transversal KRS soliton  in $\Xi_{\hat \xi,\eta_0^{\hat\xi},J^{\hat \xi}}^{\hat \T}(N)$. By the discussion in Section~\ref{s:Kahler-cone}, this is also equivalent to the existence of a $\hat \xi$-polarized transversal KRS cone metric on $Y=K_X^{\times}$.  Theorem~\ref{thm:main2} then follows from  Theorems~\ref{thm:openedness} and \ref{thm:HL}, or by Corollary~\ref{c:Lebrun-Simanca} applied to $v_{\xi}$.
\end{proof}
\begin{rem} The assumption in the above proof that $K^{\times}_X$ is a cone of \emph{smooth} Fano manifold, or,  equivalently, that the Sasaki manifold  $(N, \Ds, J, \hat\T)$ admits a regular Sasaki-Reeb vector field can be removed by considering instead a quasi-regular Sasaki--Reeb field $\hat \chi \in \hat \tor_+$. In this case, the quotient $X:= N/\Sph^1_{\hat \chi}$ is a K\"ahler Fano orbifold. We can still apply Corollary~\ref{c:v-TKRS} which hold for orbifolds. Instead of Theorem~\ref{thm:openedness}, which uses the results in \cite{HL} on a smooth Fano manifold,  one can use an openness result a la LeBrun--Simanca (which we recall in Appendix~\ref{a:LeBrun-Simanca} and whose proof can be adapted to orbifolds) in order to establish  the existence of a transversal KRS in $\Xi_{\hat \xi,\eta_0^{\hat\xi},J^{\hat \xi}}^{\hat \T}(N)$ for all Sasaki--Reeb vector fields near $\hat \xi_0$. This is closer to the original approach in \cite{Patrecca}. \end{rem}

\section{A Lichnerowicz type obstruction for transversal KRS soliton: Proof of Theorem~\ref{thm:positivity}}
We discuss in this subsection a necessary condition in terms of the function $\tau(\xi)$ for the weight function $v_{\xi}$ to belong to $\mathcal{S}$, i.e  $K_X^{\times}$ to admit a compatible transversal KRS polarized by $\hat \xi$.

\subsection{Proof of Theorem~\ref{thm:positivity}}
\begin{prop}\label{p:strong-positivity} 
 Let $(Y, \hat\xi)$ be a  smooth polarized complex cone endowed with a holomorphic volume form $\Omega$, satisfying ${\mathcal L}_{-J\hat \xi} \Omega = \Omega$. Let $\hat \T$ be a maximal torus in $\Aut(Y)$ such that $\hat \xi \in \hat \tor={\rm Lie}(\hat \T)$. Suppose that $(Y, \hat \xi)$ admits a compatible transversal K\"ahler--Ricci soliton, $\hat \omega = \frac{1}{4}dd^c r$, $r\in \mathcal{R}_{\hat \xi}^{\hat \T}(Y)$,   with corresponding soliton vector field 
 $\tau_{\hat \xi}\in \hat \tor$ normalized by ${\mathcal L}_{-J\tau_{\xi}}\Omega=0$. Then $$\hat\xi-\frac{1}{n}\tau_{\hat \xi}$$ is a Sasaki--Reeb polarization on $Y$. 
\end{prop}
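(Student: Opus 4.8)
The plan is to reduce the proposition to the pointwise estimate $h<n$ on the link $N$, where $h$ is the soliton potential of the transversal KRS, and then to deduce this estimate from the standard gradient K\"ahler--Ricci soliton identities together with the strict positivity of the transversal scalar curvature established in Appendix~\ref{a:Curvature-Identities}. First I would pass to the Sasaki picture: let $N=r^{-1}(1)$ carry the Sasaki structure induced by $\hat\omega=\tfrac14 dd^c r^2$, with contact form $\eta:=(d^c\log r)|_{N}$, Reeb vector field $\hat\xi$ (so $\eta(\hat\xi)=1$), transversal K\"ahler form $\omega^T:=d\eta$ and transversal Ricci form $\rho^T$. The normalization $\mathcal L_{-J\hat\xi}\Omega=\Omega$ forces $\hat\xi\in\hat\Sigma_+$, i.e. $\lambda_{\hat\xi}=1$ by Lemma~\ref{lem:lambda=gamma}, while $\mathcal L_{-J\tau_{\hat\xi}}\Omega=0$ forces $\tau_{\hat\xi}\in\tor$; hence by Definition~\ref{defnTRS} the transversal KRS equation is
\[ \rho^T-\omega^T=\tfrac12\,dd^c_{\hat\xi}\,h,\qquad h:=\eta(\tau_{\hat\xi}). \]
Next I would use the (standard, cf.~\S\ref{s:Kahler-cone} and \cite{MSY}) fact that if $\hat\zeta\in\hat\tor$ satisfies $\eta(\hat\zeta)>0$ on all of $N$, then $\hat\zeta$ is a Sasaki--Reeb polarization of $Y$: indeed $\eta/\eta(\hat\zeta)$ is then a contact form of a compatible Sasaki structure polarized by $\hat\zeta$ with the same transversal complex structure, so $\hat\zeta\in\kt_+(Y)$. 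Since $\eta\big(\hat\xi-\tfrac1n\tau_{\hat\xi}\big)=1-\tfrac1n h$, it is enough to prove $h<n$ on $N$.

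The transversal structure is locally K\"ahler of complex dimension $n$ and $h$ is a basic soliton potential with transversally holomorphic gradient, so the gradient K\"ahler--Ricci soliton identities recalled in Appendix~\ref{a:Curvature-Identities} hold transversally: the transversal trace of the soliton equation gives $\Scal^T=2n+\Delta^T h$, and the Hamilton-type identity gives a constant $C$ with $\Scal^T+|\nabla^T h|^2+2h=C$, where $\Scal^T$, $\Delta^T$, $|\nabla^T h|^2$ refer to the Sasaki metric $g$ (recall that $\Delta^T$ agrees with the Laplacian of $g$ on $\hat\T$-invariant functions). To evaluate $C$, I would integrate the Hamilton-type identity over the compact $N$ against the measure $e^{h}\,\eta\wedge(d\eta)^n$: the term $\int_N 2h\,e^{h}\,\eta\wedge(d\eta)^n$ vanishes, since this is exactly \eqref{eq:TianZhouFutakiInvariant} applied to $a=\tau_{\hat\xi}\in\tor$; and substituting $\Scal^T=2n+\Delta^T h$ and integrating by parts (self-adjointness of $\Delta^T$ with respect to $\eta\wedge(d\eta)^n$ on basic functions),
\[ \int_N\Scal^T\,e^{h}\,\eta\wedge(d\eta)^n=2n\!\int_N e^{h}\,\eta\wedge(d\eta)^n-\int_N|\nabla^T h|^2\,e^{h}\,\eta\wedge(d\eta)^n. \]
The $|\nabla^T h|^2$-contributions then cancel, so $C\int_N e^{h}\,\eta\wedge(d\eta)^n=2n\int_N e^{h}\,\eta\wedge(d\eta)^n$, i.e. $C=2n$.

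Hence $\Scal^T+|\nabla^T h|^2+2h\equiv 2n$ on $N$; since $\Scal^T>0$ everywhere by Appendix~\ref{a:Curvature-Identities} and $|\nabla^T h|^2\ge 0$, this gives $2h<2n$, i.e. $h<n$, so $\eta\big(\hat\xi-\tfrac1n\tau_{\hat\xi}\big)>0$ on $N$ and $\hat\xi-\tfrac1n\tau_{\hat\xi}$ is a Sasaki--Reeb polarization of $Y$, as claimed (and, reading $h$ through the identifications of \S\ref{s:Kahler-cone}, this is precisely the inequality $\langle\tau_\xi,x\rangle<n(\langle\xi,x\rangle+1)$ on $\Pol_X$ of Theorem~\ref{thm:positivity}). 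I expect the main obstacle to be bookkeeping rather than any single hard estimate: making the dictionary of the first paragraph precise (``Sasaki--Reeb polarization of $Y$'' $\Leftrightarrow$ pointwise positivity of $\eta(\hat\zeta)$, together with the normalizations $\lambda_{\hat\xi}=1$ and $\tau_{\hat\xi}\in\tor$), and tracking the conventional factors of $2$ coming from $dd^c$, from the normalization of $\Scal^T$, and from $\Delta^T$, so that the constant comes out to be exactly $C=2n$ and not $n$ or $4n$.
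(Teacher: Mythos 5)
Your argument is correct and follows essentially the same route as the paper: both reduce the claim to the pointwise bound $h<n$ on the link via the identity $\Scal^T+|\nabla^T h|^2+2h=2n$ (the paper writes the gradient term as $\langle d^c_{\hat\xi}h,\tau_{\hat\xi}\rangle$, which equals $|\nabla^T h|^2$) together with the strict positivity of the transversal scalar curvature from Appendix~\ref{a:Curvature-Identities}. The only cosmetic differences are that the paper obtains the constant $2n$ directly from \eqref{eq:comparingMOMENTS} rather than by your integral normalization against $e^{h}\,\eta\wedge(d\eta)^n$ (and note the Hamilton-type identity itself is in \cite{PW} but is not among the identities actually recalled in the appendix), and that it extracts the inequality by evaluating at a maximum of $h$ rather than pointwise.
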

\begin{proof} In the notation of the previous subsections, we have  $\hat \xi \in \hat\Sigma_+$ and $\tau_{\hat \xi}\in \tor$. We shall work on the corresponding Sasaki manifold $(N:=r^{-1}(1), \Ds, J, \hat\xi, \eta^{\hat \xi})$, which is  a transversal KRS by assumption. We want to show that $\hat\xi-\frac{1}{n}\tau_{\hat \xi} \in \hat \tor_+(N)$ or, equivalently, (see \cite{CS,BV,ACL})
\begin{equation}\label{inequality} \eta^{\hat \xi}\left(\hat\xi-\frac{1}{n}\tau_{\hat \xi}\right)=1 -\frac{1}{n} \eta^{\hat \xi}(\tau_{\hat \xi})>0.
\end{equation}
Contracting the transversal KRS equation \eqref{eq:trKRS} with the transversal K\"ahler form  $\omega:=d\eta^{\hat \xi}$ gives for the transversal scalar curvature $\Scal(\omega)$
\[ \Scal(\omega) = 2n  - \Delta_{\omega} f, \qquad f:= \eta^{\hat \xi}(\tau_{\hat \xi}).\]
By \eqref{eq:comparingMOMENTS} (with $h=f$ and contracting $\tau_{\hat \xi}$), we also get
\begin{equation}\label{eqLapNormKRS}
f-\frac{1}{2}\Delta_{\omega}f +\frac{1}{2}\langle d^c_{\hat \xi} f , \tau_{{\hat \xi}}\rangle =0,\end{equation}
so we obtain
\[ \Scal(\omega) = 2n - 2f - \langle d^c_{\hat \xi} f, \tau_{\hat \xi}\rangle. \]
As $\Scal(\omega)>0$ everywhere on $N$ by Corollary~\ref{c:scal>0} established in the Appendix~\ref{a:Curvature-Identities}, evaluating the above equality at a point of global maximum of $f$ 
yields  \eqref{inequality}.  \end{proof}

\begin{proof}[{ Proof of Theorem~\ref{thm:positivity}}] We can realize $X$ as the regular K\"ahler quotient of a polarized cone $(Y=K_X^{\times}, \hat \xi)$ endowed with a compatible transversal K\"ahler--Ricci soliton with vector field $\hat \tau$. The lifts $\hat \xi$ and $\hat \tau$ of the vector fields $0$ and $\tau$ on $X$  are normalized respectively by  ${\mathcal L}_{-J\hat \xi} \Omega =\Omega$ and $ {\mathcal L}_{-J\hat \tau} \Omega =0$. Thus,  by 
Proposition~\ref{p:strong-positivity},  the vector field $n\hat \xi - \hat \tau$ defines a Sasaki--Reeb polarization on $Y$. In terms of $X$, this means that  $n - \mu_{\omega}^{\tau} >0$. The second statement follows similarly.
\end{proof}

\subsection{Theorem~\ref{thm:positivity} as a Lichnerowicz type condition}
It is well-known that the normalized characters for the linear action of $\hat \T$ on the spaces $H^0(X, -mK_X)$  of $m$-plurianticanonical sections of $X$ are rational lattice points inside $\Pol_X$ (see e.g. \cite[Lemma 13]{lahdili}). From this point of view, the first inequality in Theorem~\ref{thm:positivity} yields an a priori bound on the spectrum of the operator $\mathcal{L}_{\tau}$ acting on $H^0(X, -mK_X)$: 
\begin{cor}\label{c:plurianticanonical} If $(X, \T)$ admits a $\T$-invariant KRS with soliton vector field $\tau$ and $s\in H^0(X, -mK_X)$ is a non-trivial $m$-plurianticanonical holomorphic section such that $\frac{1}{m}\mathcal{L}_{\tau} s = i\lambda s, \, \lambda \in \R$, then $(n-\lambda)>0$. 
\end{cor}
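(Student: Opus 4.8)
\emph{Proof proposal for Corollary~\ref{c:plurianticanonical}.}

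The plan is to reduce the statement directly to the first inequality of Theorem~\ref{thm:positivity}, by recognizing the eigenvalue $\lambda$ as the value at a lattice point of $\Pol_X$ of the affine function $x\mapsto \langle\tau,x\rangle$. First I would recall the standard fact (cited as \cite[Lemma~13]{lahdili}) that the canonical lift of the $\T$-action to $-K_X$, and hence to $-mK_X$, makes $H^0(X,-mK_X)$ into a $\T$-module whose weight decomposition $H^0(X,-mK_X)=\bigoplus_{\chi} V_\chi$ has the property that every character $\chi$ with $V_\chi\neq 0$, regarded as an element of $\tor^*$ via the weight lattice, satisfies $\tfrac1m\chi\in\Pol_X$. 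Here $\Pol_X$ is the \emph{canonical} momentum polytope of \S\ref{s:notation}, normalized precisely so that this holds with the sign convention of \eqref{moment-map}.

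Next, given a non-trivial $s$ with $\tfrac1m\mathcal{L}_\tau s=i\lambda s$, I would observe that $\mathcal{L}_\tau$ acts on each weight space $V_\chi$ as multiplication by $i\langle\chi,\tau\rangle$; hence $s$ lies in the sum of the weight spaces with $\langle\chi,\tau\rangle=m\lambda$. Choosing any character $\chi$ that actually occurs in $s$ (so $V_\chi\neq 0$ and $\langle\chi,\tau\rangle=m\lambda$) and setting $x:=\tfrac1m\chi\in\Pol_X$, we get $\lambda=\langle\tau,x\rangle$. Finally, applying the first inequality of Theorem~\ref{thm:positivity}, namely $\langle\tau,x\rangle<n$ for all $x\in\Pol_X$, yields $n-\lambda>0$, as claimed.

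The argument is essentially a bookkeeping exercise once Theorem~\ref{thm:positivity} is in hand, so there is no serious obstacle; the only point requiring care is the sign and normalization, i.e. checking that the canonical lift used to cut out $\Pol_X$ in \eqref{moment-map} is indeed the one under which $\mathcal{L}_\tau$ acts on $V_\chi$ by $+i\langle\chi,\tau\rangle$ (rather than $-i\langle\chi,\tau\rangle$), so that the rescaled characters $\tfrac1m\chi$ land in $\Pol_X$ with the orientation for which Theorem~\ref{thm:positivity} was proved. This compatibility is built into the conventions fixed in \S\ref{s:notation}, and I would simply make it explicit.
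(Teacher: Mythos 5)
Your proposal is correct and is essentially the paper's own argument: the paper derives Corollary~\ref{c:plurianticanonical} precisely by noting (citing \cite[Lemma 13]{lahdili}) that the normalized characters of the $\T$-action on $H^0(X,-mK_X)$ lie in $\Pol_X$, so that the eigenvalue $\lambda$ is a value of $\langle\tau,\cdot\rangle$ on $\Pol_X$ and the first inequality of Theorem~\ref{thm:positivity} gives $n-\lambda>0$. Your extra care about the sign convention in the weight-space decomposition is a reasonable elaboration of the same bookkeeping.
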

On the other hand, plurianticanonical sections of $X$ give rise to certain holomorphic functions on the corresponding cone $Y = K_X^{\times}$.  Thus, Corollary~\ref{c:plurianticanonical} should also lead to an a priori bound on the eigenvalues (also called \emph{charges} in \cite{gaunlett-etal}) for the infinitesimal action of the soliton vector field $\tau$ on the space of holomorphic functions on $Y$. This is reminiscent to but different from the bounds on the charges of holomorphic functions of $Y$ under the action of the Reeb field of a Calabi--Yau cone structure on $Y$,  obtained in \cite{gaunlett-etal}.

We provide below a direct  argument for this bound, independent of the proof of Theorem~\ref{thm:positivity}.

\begin{lemma}\label{lem:LichBound}
Let $(Y,\hat \xi)$ be a smooth polarized {complex cone} satisfying the conditions of Proposition~\ref{p:strong-positivity} with respect to a maximal torus of automorphisms $\hat \T \subset \Aut(Y)$.  If $\varphi : Y \ra \C$ is  a non-trivial holomorphic function on $Y$ satisfying $\cL_{\hat \xi} \varphi = i \kappa \varphi$ and $\cL_{\tau_{\hat \xi}} \varphi  =  i\kappa\lambda \varphi$ for some $\lambda\in \R$ and $\kappa \in \R_{>0}$, then  
$$(n - \lambda)>0.$$ \end{lemma}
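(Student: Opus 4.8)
The plan is to reduce the statement to Proposition~\ref{p:strong-positivity} by exhibiting $\varphi$ as a section of a power of $K_X$ (equivalently a holomorphic function on $Y=K_X^{\times}$) with a controlled weight, and then to use the transversal KRS equation together with the positivity of the transversal scalar curvature (Corollary~\ref{c:scal>0}) to pin down the bound. First I would set up the Sasaki picture: passing to $N=r^{-1}(1)$ with its transversal K\"ahler structure $(\Ds,J,\hat\xi,\eta^{\hat\xi})$, which is a transversal KRS with soliton vector field $\tau:=\tau_{\hat\xi}\in\tor$ and $\lambda_{\hat\xi}=1$. The holomorphic function $\varphi$ on $Y$ has bidegree $(\kappa,\kappa\lambda)$ under $(\hat\xi,\tau)$; rescaling the cone coordinate I may assume $\kappa>0$. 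The point is that $|\varphi|^2$ (or rather $\log|\varphi|$, away from the zero set) is a $dd^c_{\hat\xi}$-plurisubharmonic object whose Laplacian interacts with the Ricci form through the transversal KRS equation.

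The key computation I would carry out is a Bochner/maximum-principle argument. On the Sasaki manifold, write $f:=\eta^{\hat\xi}(\tau)$ for the Killing potential of $\tau$ and consider the function
\[
u:=\frac{|\varphi|^2}{r^{2\kappa}},
\]
which is $-J\hat\xi$-invariant, hence descends to a basic function on $N$; it vanishes exactly on the zero divisor of $\varphi$. Since $\varphi$ is holomorphic, $\Delta_\omega \log u$ is (up to the current of integration along $\{\varphi=0\}$) expressible via the transversal Ricci form, and a standard Weitzenb\"ock identity gives a differential inequality for $u$ that, combined with $\cL_\tau\varphi=i\kappa\lambda\varphi$, forces $\lambda$ to be dominated by the maximum of $f/n$. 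Concretely I expect to arrive at an inequality of the shape
\[
n\lambda \;<\; \sup_N f \;+\;(\text{nonpositive terms at the max of }u),
\]
which together with \eqref{eqLapNormKRS} and $\Scal(\omega)=2n-2f-\langle d^c_{\hat\xi}f,\tau\rangle>0$ at the maximum point of $f$ yields $\sup_N f < n$, and hence $\lambda<n$, i.e. $n-\lambda>0$. An alternative, perhaps cleaner, route is purely algebraic: observe that the datum $(\varphi,\kappa,\lambda)$ produces a one-parameter family of Sasaki--Reeb deformations $\hat\xi_t:=\hat\xi-t(\tau-\lambda\hat\xi)$ along which the weighted volume (the functional $V_{\hat\xi}$ of Lemma~\ref{lem:ExistFut0}) is finite only while $\eta^{\hat\xi}(\hat\xi_t)>0$; the existence of $\varphi$ with the stated charges shows that $\hat\xi-\frac1n\tau$ remains inside the Sasaki--Reeb cone, which is exactly Proposition~\ref{p:strong-positivity}, and the strict inequality $n-\lambda>0$ follows because a nontrivial holomorphic function cannot be supported on the boundary stratum where $\eta^{\hat\xi}(\hat\xi-\frac1n\tau)=0$.

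The main obstacle I anticipate is handling the zero set of $\varphi$: $\log|\varphi|$ is only plurisubharmonic, not smooth, so the maximum principle must be applied to $u=|\varphi|^2/r^{2\kappa}$ rather than its logarithm, and one has to verify that at an interior maximum of $u$ the Hessian and gradient terms coming from the holomorphicity of $\varphi$ have the right sign. Once that sign analysis is in place, feeding in the transversal KRS equation \eqref{eq:trKRS}, the normalization \eqref{eqLapNormKRS}, and $\Scal(\omega)>0$ is routine, and the strictness of $n-\lambda>0$ drops out because equality would force $u$ to be constant, contradicting that $\varphi$ has a nonempty zero divisor (or, in the trivial-divisor case, that $\varphi$ is then a nowhere-vanishing holomorphic function on the cone, which still forces the strict inequality by the quasi-regular argument of \cite{MSY}).
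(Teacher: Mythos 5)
Your overall strategy --- restrict to the link $N=r^{-1}(1)$, run a maximum-principle argument on a basic function built from $|\varphi|^2$, and feed in the transversal KRS equation --- is the same in spirit as the paper's, but the proposal omits the one computation that carries the entire content of the lemma, and the inequality you predict in its place cannot be right. On $N$ your test function $u=|\varphi|^2/r^{2\kappa}$ is just $|\varphi|^2$, and a second-order maximum principle applied to it says nothing about $\lambda$: the computation gives $\Delta_\omega|\varphi|^2=-\tr_{\omega}(2id\varphi\wedge d\bar\varphi)+2n\kappa|\varphi|^2$, in which $\lambda$ does not appear. The eigenvalue enters only through the first-order identity $\langle d|\varphi|^2,df\rangle_\omega=2\kappa(\lambda-f)|\varphi|^2$, $f=\eta^{\hat\xi}(\tau_{\hat\xi})$, and the paper absorbs this term by maximizing the \emph{weighted} function $e^{-\kappa f}|\varphi|^2$ against the drift Laplacian $\Delta_{\omega,f}$, arriving at the exact identity
\[
\Delta_{\omega,f}\bigl(e^{-\kappa f}|\varphi|^2\bigr)=2\kappa(n-\lambda)\,e^{-\kappa f}|\varphi|^2-4\bigl\|d\bigl(e^{-\kappa f/2}|\varphi|\bigr)\bigr\|^2_\omega,
\]
from which $n-\lambda\ge 0$ follows at a global maximum. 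Your anticipated shape ``$n\lambda<\sup_N f+(\text{nonpositive terms})$'' is false in general: for plurianticanonical sections the admissible $\lambda$ densely fill the values of $\langle\tau,\cdot\rangle$ on $\Pol_X$, so $n\lambda$ exceeds $\sup_N f$ as soon as $n\ge 2$ and $\tau\ne 0$. (Curiously, the first-order identity alone, evaluated at the maximum point $p$ of $|\varphi|^2$ on $N$ where $d|\varphi|^2=0$ and $|\varphi|^2>0$, already yields $\lambda=f(p)<n$ by Proposition~\ref{p:strong-positivity}; this is a one-line proof, but it \emph{uses} Proposition~\ref{p:strong-positivity}, whereas the paper's computation is deliberately independent of it. You circle around this observation without ever stating it.)

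Two further gaps. Your strictness argument (``equality forces $u$ constant, contradicting the nonempty zero divisor'') is not available in the stated generality: $Y$ is an abstract polarized cone with trivial canonical bundle, and a $\hat\xi$-homogeneous holomorphic function need not vanish anywhere a priori. The paper instead shows that $\lambda=n$ forces $e^{-\kappa f}|\varphi|^2$ to be constant, deduces $|\varphi|^2=Cr^{2\kappa}e^{\kappa f}$ on $Y$, differentiates along $-J\tau_{\hat\xi}$ to conclude that $f$ is constant, hence $\tau_{\hat\xi}=0$ and $\lambda=0$, a contradiction. Finally, the ``alternative algebraic route'' rests on the assertion that the weight of a nontrivial holomorphic function pairs strictly positively with every interior Sasaki--Reeb vector; that duality between the Reeb cone and the weight cone is a statement of exactly the same nature as the lemma (indeed, proving it for the particular vector $\hat\xi-\tfrac1n\tau_{\hat\xi}$ \emph{is} the lemma), so invoking it without proof begs the question.
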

\begin{proof}  Let $r\in \cR_{\hat \xi}^{\hat \T}(Y)$ be the potential of a compatible transversal KRS cone metric on $(Y, \hat \xi)$,  with soliton vector field  $\tau=\tau_{\hat\xi} \in \tor$. As in the proof of Proposition~\ref{p:strong-positivity},  we shall work on the link $N:=r^{-1}(1)$, with the induced transversal KRS Sasaki structure $(\Ds, J, \hat \xi, \eta^{\hat \xi})$.  We still denote by $\omega:=d\eta^{\hat \xi}$ the transversal K\"ahler structure and let 
$f :=\eta^{\hat \xi}(\tau_{\hat \xi})= \mu_{\omega}^{\tau}$ be the transversal Killing potential of $\tau_{\hat \xi}$.  

We will first prove that,  when restricted on $N$, $\varphi$ satisfies the inequality
\begin{equation}\label{eqCoroWeitzenbockCone}
    \Delta_{\omega,f}(e^{-\kappa f}|\varphi|^2) \le  2\kappa(n -\lambda)e^{-\kappa f}|\varphi|^2, 
\end{equation} where $\Delta_{\omega,f} \psi := \Delta_\omega \psi  + \langle d\psi, df \rangle_{\omega}$ is the twisted $\hat\xi$-basic Laplacian (recall that $\Delta_\omega$ denotes the $\hat\xi$-basic Laplacian).

As $\varphi$ is holomorphic, the assumptions equivalently read as
\begin{equation*} {\mathcal L}_{-J\hat \xi} \varphi = \kappa \varphi, \qquad \mathcal{L}_{-J\tau_{\hat \xi}} \varphi = \kappa\lambda \varphi,
\end{equation*}
and hence 
\begin{equation}\label{all-weights} {\mathcal L}_{\hat \xi}|\varphi|^2=0, \qquad \mathcal{L}_{-J\hat \xi} |\varphi|^2 = 2\kappa  |\varphi|^2, \qquad \mathcal{L}_{-J\tau_{\hat \xi}} |\varphi|^2 = 2\kappa\lambda |\varphi|^2.\end{equation}
At any point of $N$, we can write (see e.g. \cite[Lemma 1]{AC}) 
\begin{equation}\label{decompose}
\tau_{\hat \xi} = f\hat \xi -\omega^{-1}(df), \qquad  J\tau_{\hat \xi} = f J\hat \xi - g_{\omega}^{-1} (df).\end{equation}
We get from the above
\begin{equation}\label{intermediate} 
\langle d|\varphi|^2, df \rangle_{\omega}= 2\kappa(\lambda -f)|\varphi|^2,\end{equation}
where $\langle \cdot, \cdot \rangle_{\omega}$ stands for the inner product induced by the transversal K\"ahler structure $\omega$  on $\hat\xi$-basic tensors.  Together with  \eqref{eqLapNormKRS}, the latter yields 
\begin{equation*}\label{eqTwistedterm}
    \left\langle d (e^{-\kappa f}|\varphi|^2), df \right\rangle_{\omega} = 2\kappa e^{-\kappa f}|\varphi|^2\left(\lambda - \frac{1}{2}\Delta_{\omega} f\right).
\end{equation*} 
A direct computation using \eqref{intermediate} shows that 
\begin{equation*}\label{eqLapProofIneq1}
      \Delta_{\omega} (e^{-\kappa f}|\varphi|^2) =  e^{-\kappa f}|\varphi|^2\left( -\kappa\Delta_\omega f  -\kappa^2||df||^2_\omega +4\kappa^2 \lambda - 4\kappa^2 f \right) + e^{-\kappa f}  \Delta_{\omega}|\varphi|^2,
\end{equation*} 
so that, combining with \eqref{eqTwistedterm}, we obtain
\begin{equation}\label{eqLapProofIneq2}
      \Delta_{\omega,f} (e^{-\kappa f}|\varphi|^2) =  e^{-\kappa f}|\varphi|^2\left( -\kappa^2 ||df||^2_\omega+2\kappa (2\kappa-1)\lambda - 4\kappa^2 f \right) + e^{-\kappa f}  \Delta_{\omega} |\varphi|^2.
\end{equation}
We next develop the term $\Delta_{\omega} |\varphi|^2$. To this end, recall that any holomorphic function $\varphi$ on $Y$ satisfies (on the open dense subset where $\varphi \neq 0$)
\begin{equation}\label{holomorphic} dd^c|\varphi|^2 = 2id\varphi\wedge d\bar \varphi= \frac{1}{|\varphi|^2}\left(d|\varphi|^2\wedge d^c|\varphi|^2\right).\end{equation}
Furthermore, if $\iota : N\hookrightarrow Y$ denotes the inclusion and $\psi$ is any smooth function on $Y$,  then we have on $N$ \[d^c_{\hat\xi}(\iota^*\psi) = \iota^*(d^c\psi) + (\mathcal{L}_{J\hat{\xi}}\psi)\eta^{\hat \xi}, \qquad dd^c_{\xi}\psi = \iota^*(dd^c\psi) - d \left((\mL_{-J\hat{\xi}} \psi)\eta^{\hat \xi} \right).\]  
We use the previous two identities and \eqref{all-weights} to compute
\[ \Delta_{\omega} |\varphi|^2 = -\tr_{\omega} dd^c_{\hat \xi} |\varphi|^2  = -\tr_{\omega} ( 2id\varphi\wedge d\bar {\varphi}) + 2n\kappa  |\varphi|^2.\] 
Substituting back in \eqref{eqLapProofIneq2} and regrouping the terms yields {
\begin{equation}\label{eqLapProofIneq3}\begin{split}\Delta_{\omega,f} (e^{-\kappa f}|\varphi|^2) = &2\kappa e^{-\kappa f}|\varphi|^2\left( n-\lambda\right)\\&+ 2\kappa^2 e^{-\kappa f}|\varphi|^2\left( -\frac{ ||df||^2_\omega}{2}+2\lambda -2f\right)\\&-e^{-\kappa f}\tr_{\omega}(2id\varphi\wedge d\bar{\varphi}) \end{split} \end{equation} }
{ 
Working outside the zero locus of $\varphi$ for a moment,  we compute
\begin{equation*}\label{eqDerivNorm}||d (e^{-\frac{\kappa f}{2}}|\varphi|)||^2_\omega = \frac{\kappa^2}{4}|\varphi|^2e^{-\kappa f}(||d f ||^2_\omega)  - \kappa|\varphi|e^{-\kappa f} \langle df,d|\varphi|\rangle_{\omega} + e^{-\kappa f}||d|\varphi|||^2_\omega. \end{equation*}} 
{ Using \eqref{eqTwistedterm}, the second term of the right hand side of the above identity may be rewritten and we get}
{ 
\begin{equation}\label{eqDerivNorm2}||d (e^{-\frac{\kappa f}{2}}|\varphi|)||^2_\omega = \frac{\kappa^2}{4}|\varphi|^2e^{-\kappa f}(||d f ||^2_\omega)  - \kappa^2|\varphi|^2 e^{-\kappa f} (\lambda -f) + e^{-\kappa f}||d|\varphi|||^2_\omega. \end{equation}}
{
The second line of \eqref{eqLapProofIneq3} is exactly $- 4 ||d (e^{-\frac{\kappa f}{2}}|\varphi|)||^2_\omega +4e^{-\kappa f}||d|\varphi|||^2_\omega$, so that} 
{
\begin{equation*}\label{eqLapProofIneq4}\begin{split}\Delta_{\omega,f} (e^{-\kappa f}|\varphi|^2) = &2\kappa e^{-\kappa f}|\varphi|^2\left( n-\lambda\right)\\&- 4 ||d (e^{-\frac{\kappa f}{2}}|\varphi|)||^2_\omega +4e^{-\kappa f}||d|\varphi|||^2_\omega\\&-e^{-\kappa f}\tr_{\omega}(2id\varphi\wedge d\bar{\varphi}). \end{split} \end{equation*}}
{Using the second equality in \eqref{holomorphic}  we finally obtain 
\begin{equation}\label{eqLapProofIneq5}\Delta_{\omega,f} (e^{-\kappa f}|\varphi|^2) = 2\kappa e^{-\kappa f}|\varphi|^2\left( n-\lambda\right)- 4 ||d (e^{-\frac{\kappa f}{2}}|\varphi|)||^2_\omega, \end{equation} } 
{which yields the inequality  \eqref{eqCoroWeitzenbockCone} everywhere on $Y$.}

\bigskip
From \eqref{eqCoroWeitzenbockCone}, we  conclude that $(n-\lambda)\ge 0$ by the maximum principle applied on a local quotient of $N$ by $\hat \xi$, near a point of global maximum of $e^{-kf}|\varphi|^2$ on $N$. (Notice that $|\varphi|^2$ cannot be identically zero on $N$ as $\varphi$ is  a non-trivial holomorphic function on $Y$ by assumption.) If $\lambda =n$, then the maximum principle tells us that  $e^{-\kappa f} |\varphi|^2=C$ for some constant $C$ on $N$. By \eqref{all-weights}, we  have on $Y$
$$|\varphi|^2=C r^{2\kappa} e^{\kappa f}.$$ Taking Lie derivative in the direction of $-J\tau_{\hat \xi}$ in both sides of the last equation  (and using \eqref{all-weights} and \eqref{decompose}) gives \[ \lambda = C(|df|^2 +f),\]  which in turn implies that $f$ is a constant and thus $\tau_{\hat \xi}=0$. In that case $\lambda=0$, a contradiction.\end{proof}

\section{A weighted version of the Fujita volume bound: Proof of Theorem~\ref{thm:fujita}}


\subsection{The weighted $\beta$-invariant of a polarized manifold and Fujita's volume bound}
Here we recall the setup in \cite{HL, HL2} of valuative characterization  of weighted Ding stability of smooth Fano varieties. 

Let  $\pi: L\ra X$ be a holomorphic line bundle endowed with the action of a real torus $\T \subset \Aut(X, L)$,  covering $\T \subset \Aut(X)$.  Let $W_L \subset \tor^*$ be the convex hull of  normalized weights
\[\left\{ \frac{\alpha}{k} \in \mathfrak{t}^* \;|\;  k\in \N^*, H^0(X,kL)_{\alpha} \neq 0 \right\}\] of the linear action of $\T$ on the vector spaces $H^0(X, kL)$.  When $L$ is ample, similarly to \eqref{moment-map}, one can associate to the chosen lift $\T \subset \Aut(X, L)$ a momentum polytope $P_{L} \subset \tor^*$ and it is well-known that in this case  $P_L=W_L$, see for example \cite[Chapter 8]{Woodward}.

For a smooth weight function $v \in C^\infty(W_L,\R_{>0})$, Han--Li introduced in \cite{HL} the (algebraic) $v$-weighted volume of $L$ as the  limit  
\[ \Vol_v(L) : = \lim_{k\ra +\infty} \frac{n!}{k^{n}} \sum_{\alpha \in \kt^*} v\left(\frac{\alpha}{k}\right)  \dim \left(H^0(X,kL)\right)_{\alpha},\] 
the existence of which is justified in \cite[p.40]{HL}. We set
\[ \Vol(L):= \Vol_1(L).\]
In the case when $L$ is ample, we have (see e.g. \cite[Lemma 14]{lahdili})
\[ \Vol_v(L) = \frac{n!}{(2\pi)^n}  \int_{\Pol_X} v(x) d\mu_{\rm DH},\]
where $d\mu_{\rm DH}$ is introduced through \eqref{DH} for a $\omega \in 2\pi c_1(L)$. It thus follows that in this case \begin{equation}\label{volume-normalization}
    \Vol(L) = c_1(L)^n.\end{equation}

Let now $X$ be a smooth Fano variety and $\T \subset \Aut(X)$ a maximal torus as in the setup of Sect.~\ref{s:notation}. The torus $\T$  admits a canonical lift on $L=-K_X$, and by the above discussion, $W_{-K_X}=\Pol_X$, where $\Pol_X$ is the canonically normalized polytope of $(X, \T)$. Let $\hat X$ be the blow-up of $X$ at a point $p\in X$ fixed by $\T$. We thus have a $\T$-equivariant birational morphism $b : \hat{X} \to X$,  and a $\T$-stable exceptional divisor $D \subset \hat{X}$. Notice that $D$ is, by definition, an instance of a \emph{prime divisor} over $(X, \T)$,  so we can use the theory of weighted $\beta$-invariant~\cite{HL,HL2}, which we recast to our setup below.
 
 Given $x\in \Q$, there exists $m_x \in \N^*$ such that $m_x(-b^*K_X -x[D])$ is a holomorphic line bundle on $\hat X$. Notice that $-b^*m_xK_X -xm_x[D]$ comes with an induced $\T$-action through the blowing-up construction,  and we have a $\T$- equivariant exact sequence of sheaves  \[0\longrightarrow \cO_{\hat X}( -b^*m_xK_X -xm_x[D]) \longrightarrow \cO_{\hat X}( -b^*m_xK_X) \longrightarrow \cO_{D}( -xm_x [D]) \longrightarrow 0,\] showing the inclusion $\left(H^0(X,m_x(-b^*K_X -x[D]))\right)_{\alpha} \subset \left(H^0(X,-m_x K_X)\right)_{\alpha}$ for each weight $\alpha$. It follows that $W_{m_x(-b^*K_X -x[D])} \subset  W_{m_x(-K_X)} =m_x P_X$.  One can therefore define the $v$-weighted volume $\Vol_v(m_x(-b^*K_X -x[D]))$  of $m_x(-b^*K_X-x[D])$ over $\hat X$ as 
\begin{equation*}
\Vol_v(m_x(-b^*K_X -x[D])) : = \lim_{k\ra +\infty} \frac{n!}{ k^{n}} \sum_{\alpha \in \kt^*} v\left(\frac{\alpha}{m_xk}\right)  \dim \left(H^0(\hat{X},k m_x(-b^*K_X -x[D]))\right)_{\alpha}.
\end{equation*}
According to \cite{HL}, this weighted volume is well-defined, continuous in $x$, and homogeneous of order $n$ in $m_x$. We can thus set  
\[ \Vol_v((-b^*K_X -x[D])) : =\frac{1}{m_x^n}\left( \lim_{k\ra +\infty} \frac{n!}{k^{n}} \sum_{\alpha \in \kt^*} v\left(\frac{\alpha}{m_xk}\right)  \dim \left(H^0(\hat{X},k m_x(-b^*K_X -x[D]))\right)_{\alpha}\right),\]
which is independent of $m_x$. The above definition yields the monotonicity property
\begin{equation}\label{vol-monotone} \Vol_{v_1}(-b^*K_X -x[D]) \le \Vol_{v_2}(-b^*K_X -x[D]) \qquad \forall \, v_1 \le v_2 \, \textrm{on} \, \Pol_X, \end{equation}
which we shall use below.

The weighted $\beta$ invariant of $D\subset \hat X$ is then introduced by
\begin{equation}\label{beta-v} \beta_v(D):= A_X(D)\Vol_v(-K_X) - \int_{0}^{\infty}\Vol_v(b^*(-K_X)-x[D]) dx, \end{equation} 
where $A_X(D)$ stands for the log discrepancy of the prime divisor $D$, see \cite[Definition 2.5]{DL23}. In the case when $D \subset \hat X$ is the exceptional divisor as above,  $A_X(D)=n$ (see e.g. \cite{fujita}).

\begin{proof}[Proof of Theorem~\ref{thm:fujita}] From \cite[Theorems 1.7 \& 5.18]{HL} it follows that under the hypothesis of Theorem~\ref{thm:fujita}, we have $\beta_v(D) \geq 0$, i.e.  
\[ \frac{1}{n} A_X(D) \Vol_v(-K_X) =  \Vol_v(-K_X)\ge \frac{1}{n} \int_0^{\infty} \Vol_v(-b^*K_X - x[D]).\] 
Let $m_v:=\underset{\mu\in\Pol_X}{\min}v(\mu)>0$.  By \eqref{vol-monotone} we have 
\[\Vol_{v}(-b^*K_X-x[D]) \geq m_v\Vol(-b^*K_X-x[D])\] whereas by \cite[Theorem 2.3]{fujita}
\[\Vol(b^*K_X-x[D]) \ge \Vol(-K_X)-x^n. \]
The above inequalities  give
\[
\begin{split}
 \Vol_v(-K_X)\geq&\frac{1}{n} \int_0^\infty \Vol_{v}(-b^*K_X-x[D])dx\\
 \geq&\frac{m_v}{n}\int_0^\infty \Vol(-b^*K_X-x[D])dx\\
 \geq& \frac{m_v}{n}\int_0^{(\Vol(-K_X))^{1/n}}\Vol(-b^*K_X-x[D])dx\\
 \geq & \frac{m_v}{n}\int_0^{(\Vol(-K_X))^{1/n}} \left(\Vol(-K_X)-x^{n}\right)dx\\
=&\frac{m_v}{n}\left( (\Vol(-K_X))^{1/n} \Vol(-K_X)-\frac{1}{n+1}(\Vol(-K_X))^{(n+1)/n}\right)\\
=&\frac{m_v}{n+1}(\Vol(-K_X))^{(n+1)/n}
\end{split}
\]
Under the normalization $\Vol_v(-K_X)=\Vol(-K_X)$ for the weight $v$, we obtain
\[
\Vol(-K_X)\leq \left(\frac{n+1}{m_v}\right)^n.
\]
The claim follows by \eqref{volume-normalization}.\end{proof}

\begin{rem} More generally, let $(X,L, \T)$ be a polarized projective manifold and $\T\subset\Aut(X,L)$ a lift of  the action of a real torus in $\Aut(X)$,  with corresponding momentum polytope $\Pol_L\subset\tor^*$. For any two weight functions $v>0$, $w$ on $\Pol_L$,  and a $\T$-stable prime divisor $D$ of $(X, \T)$, one can associate the $(v, w)$-weighted $\beta$-invariant of $D$ by the formula \begin{equation}\label{w-beta} \beta_{v,w}(D):=A_X(D)\Vol_v(L)+ \frac{1}{2} \int_0^\infty \Vol_w(L-x[D])dx+ \int_0^\infty \Vol'_v(L-x[D])\cdot [K_X] dx, \end{equation} where, as in \cite{DL23},  $\Vol'_v(L-x[D])\cdot [K_X] dx$ stands for the (formal) derivative  of $\Vol_v$ in the direction of the canonical bundle $K_X$. The above formula extends  the ``unweighted'' situation with $v=1, w= 2n \left(\frac{c_1(L)\cdot L^{n-1}}{L^n}\right)$  studied in \cite{DL23}. In general, in \eqref{w-beta},  we assume the differentiability of the extension of $\Vol_v$ on $\R$-line bundles in each direction, a fact established in the case $v=1$ in \cite{BFJ} but  which is open in general.  

Similarly to \cite[Corollary 3.11]{DL23}, using integration by parts,  one can rewrite \eqref{w-beta} as 
\[\beta_{v,w}(D)=A_X(D)\Vol_v(L)+ \frac{1}{2}\int_0^\infty \Vol_{w-(\tilde{v}+v)}(L-x[D])dx+ \int_0^\infty \Vol'_v(L-x[D])\cdot [L+K_X] dx. \] 
 When $X$ is Fano, $L=-K_X$ and   $w=\tilde v := 2(n + d\log v)v$,
the above reduces to \eqref{beta-v}, showing the consistency of \eqref{w-beta}. It would be interesting to show that, similarly to the unweighted case~\cite{DL23},  the value $\beta_{v,w}(D)$ is related to the $(v,w)$-weighted Donaldson-Futaki invariant~\cite{lahdili} of a certain test configuration.
\end{rem}

\appendix
\section{Smooth deformations of weighted cscK metrics}\label{a:LeBrun-Simanca}

Let $X$ be a compact K\"ahler manifold endowed with a K\"ahler class $\alpha \in H^{1,1}(X,\mathbb{R})$ and $\T\subset {\rm Aut}_{r}(X)$ a maximal torus with momentum polytope $\Pol \subset \tor^*$. Let $v, w\in C^{\infty}(\Pol), \, v >0$ be a pair of weight functions on $\Pol$. Recall that  $(v, w)$-weighted Futaki invariant (see \cite{lahdili}) vanishes if for some (and hence any) $\T$-invariant K\"ahler metric $\omega\in \alpha$  and  for any affine-linear function $\ell(x)$ on $\tor^*$,  
\[\Fut_{v,w} (\ell) : = \int_{X} \left(\Scal_{v}(\omega) - w(\mu_{\omega_\omega})\right) \, \ell(\mu_{\omega}) \omega^{[n]} =0,\]
where $\Scal_v(\omega)$ stands for the $v$-scalar curvature of $\omega$ given by
\[\Scal_v(\omega):= v(\mu_{\omega}) \Scal(\omega) + 2\Delta_{\omega} v(\mu_{\omega})  + \big\langle g_{\omega}, \mu_{\omega}^*\left(\Hess(v)\right)\big\rangle, \]
with  $\Scal(\omega)$ being the usual scalar curvature of the Riemannian metric $g_\omega$  associated to $\omega$,  $\Delta_{\omega}$ standing for the Laplace operator of $g_{\omega}$,  and  $\langle \cdot, \cdot \rangle$  denoting the contraction between the smooth $\tor^*\otimes \tor^*$-valued function $g_{\omega}$ on $X$  (the restriction of the Riemannian metric $g_{\omega}$  to $\tor \subset C^{\infty}(X, TX)$)  and the smooth $\tor\otimes \tor$-valued function ${\mu_{\omega}}^*\left(\Hess(v)\right)$ on $X$ (given by the pull-back by $\mu_{\omega}$ of ${\rm Hess}(v) \in C^{\infty}(\Pol, \tor\otimes \tor)$). 

{ The following result is a slight modification of  \cite[Theorem~B2]{lahdili} (see also \cite[Theorem~6.1.2]{Hallam-thesis} for the special case when $w$ is fixed),  where the weight functions $(v_t, w_t)$ are supposed to be positive on $\Pol$.}

\begin{lemma}\label{l:(v,w)-LeBrun-Simanca}  Let $\{(v_t, w_t), \, \, v_t, w_t \in C^{\infty}(\Pol), \, v_t>0,  \, \,  t\in U \subset \R^k\}$ be a finite dimensional smooth family of weight  functions on $\Pol$, parametrized by a neighbourhood $U$ of $0$ and satisfying  $\Fut_{v_t, w_t}\equiv 0$ for all $t\in U$. Suppose $\omega_0$ is a $\T$-invariant K\"ahler metric in $\alpha$ such that
\[ \Scal_{v_0}(\omega_0)= w_0(\mu_{\omega_0}).\] 
Then, there exists $\epsilon >0$ and a differentiable family of smooth $\T$-invariant K\"ahler metrics $\omega_t \in \alpha$, such that for $||t||<\epsilon$
\[
\Scal_{v_t}(\omega_t)=w_t(\mu_{\omega_t}),
\]
where $\mu_{\omega_t}: X\to \Pol$ is the $\omega_t$-momentum map of $\T$ normalized by $\mu_{\omega_t}(X)=\Pol$.
\end{lemma}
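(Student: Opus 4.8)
The plan is to run a standard LeBrun--Simanca type implicit function theorem argument, adapted to the weighted cscK operator and to the extra subtlety that the weights $(v_t,w_t)$ are only assumed positive in the first slot and that the equation is overdetermined unless $\Fut_{v_t,w_t}\equiv 0$. I would work in suitable H\"older spaces $C^{k,\alpha}$ on the Fr\'echet space $\mathcal H^{\T}_{\omega_0}(X)$ of $\T$-invariant K\"ahler potentials, viewing $\mathcal S\mathrm{cal}_{v}$ as a nonlinear, second-order (indeed fourth-order in the potential) differential operator. The core object is the map
\[
\Phi(t,\varphi) := \mathcal S\mathrm{cal}_{v_t}(\omega_\varphi) - w_t(\mu_{\omega_\varphi}),
\]
defined on $U\times \mathcal H^{\T,k+4,\alpha}_{\omega_0}(X)$ with values in a fixed function space; by hypothesis $\Phi(0,0)=0$. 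The goal is to solve $\Phi(t,\varphi)=0$ for small $t$.

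\textbf{Key steps.} First, I would compute the linearization $L_0 := D_\varphi\Phi(0,0)$ at the solution $\omega_0$. As is well known from Lahdili's work (and its cscK predecessor), this is a fourth-order, self-adjoint (with respect to the weighted volume $v_0(\mu_{\omega_0})\omega_0^{[n]}$), elliptic operator whose kernel consists exactly of the $\T$-invariant Killing potentials, i.e. the functions $\mu_{\omega_0}^\zeta + c$ with $\zeta\in\tor$, $c\in\R$ — equivalently the restriction to $\Pol$ of affine-linear functions pulled back by $\mu_{\omega_0}$. Call this finite-dimensional kernel $\mathfrak K$; by self-adjointness the cokernel of $L_0$ is also $\mathfrak K$. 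Second, I would split off this kernel/cokernel: restrict $\varphi$ to the $L^2_{v_0}$-orthogonal complement $\mathfrak K^\perp$ and compose $\Phi$ with the $L^2_{v_0}$-projection $\pi^\perp$ onto $\mathfrak K^\perp$ in the target. The restricted-projected map $\tilde\Phi := \pi^\perp\circ\Phi$ has $D_\varphi\tilde\Phi(0,0) = L_0|_{\mathfrak K^\perp}: \mathfrak K^\perp\to\mathfrak K^\perp$ an isomorphism, so the implicit function theorem in Banach spaces yields a differentiable family $\varphi_t\in\mathfrak K^\perp$ with $\tilde\Phi(t,\varphi_t)=0$ for $\|t\|<\epsilon$. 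Third — and this is the step that uses the Futaki hypothesis essentially — I would argue that $\Phi(t,\varphi_t)$, which a priori only lies in $\mathfrak K$, in fact vanishes: the component of $\mathcal S\mathrm{cal}_{v_t}(\omega_{\varphi_t}) - w_t(\mu_{\omega_{\varphi_t}})$ along each Killing potential $\mu_{\omega_{\varphi_t}}^\zeta$ is, up to the standard normalization, precisely $\Fut_{v_t,w_t}(\ell_\zeta)$ evaluated on the K\"ahler class $\alpha$ (the weighted Futaki invariant is independent of the chosen metric in $\alpha$), hence $0$ by assumption. This forces $\Phi(t,\varphi_t)=0$, i.e. $\omega_t := \omega_{\varphi_t}$ solves the genuine equation. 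Elliptic regularity (bootstrapping on the now-smooth coefficients, since $v_t,w_t$ are smooth and the family is smooth in $t$) upgrades $\varphi_t$ to a smooth family of smooth potentials, and one checks $\mu_{\omega_t}(X)=\Pol$ holds throughout by continuity since it holds at $t=0$ and the canonical normalization of the momentum map varies continuously.

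\textbf{Main obstacle.} The genuinely delicate point, and the reason this is a ``slight modification'' of \cite[Theorem~B2]{lahdili} rather than a verbatim citation, is that $w_t$ is \emph{not} assumed positive, so some care is needed to make sure every step where positivity of a weight was used in the original argument in fact only used positivity of $v_t$: this is true for the ellipticity and self-adjointness of $L_0$ (which need $v_0>0$ but nothing on $w_0$), for the mapping properties between H\"older spaces, and for the identification of $\ker L_0$ with Killing potentials. A secondary technical nuisance is that the target function space of $\Phi$ depends, through the weighted volume form $v_t(\mu_{\omega_\varphi})\omega_\varphi^{[n]}$ used to define the projection $\pi^\perp$, on both $t$ and $\varphi$; one handles this in the usual way by fixing the reference volume $v_0(\mu_{\omega_0})\omega_0^{[n]}$ for the splitting and absorbing the discrepancy into the nonlinear remainder, which is small near $(0,0)$. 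None of this changes the structure of the proof; it is the bookkeeping that must be done carefully.
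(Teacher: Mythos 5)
Your proposal is correct and follows essentially the same route as the paper's proof: linearize the weighted scalar curvature operator at $\omega_0$ to get the weighted Lichnerowicz operator, identify its kernel/cokernel with the space $\mathcal{P}_{\omega_0}$ of Hamiltonians of $\tor$ (using maximality of $\T$), apply the implicit function theorem to the projected equation in a Banach completion, and then use $\Fut_{v_t,w_t}\equiv 0$ to kill the residual finite-dimensional component, with elliptic bootstrapping for smoothness. The only cosmetic differences are that the paper works in Sobolev rather than H\"older spaces and states self-adjointness of $\mathbb{L}_{\omega_0}=\delta_{\omega_0}\delta_{\omega_0}\bigl(v_0(\mu_{\omega_0})(Dd\,\cdot)^{-}\bigr)$ with respect to the unweighted $L^2$ product of $\omega_0$, which is an equivalent normalization.
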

\begin{proof}  The proof is a standard adaptation of the arguments in \cite{LS}, using the computations in \cite{lahdili}.

Consider the Fr\'echet space $C^{\infty}(X)^{\T}$ of smooth $\T$-invariant functions on $X$ and denote by  $\mathcal{K}^{\T}_{\omega_0}(X)$ the subset of $\T$-invariant $\omega_0$-relative K\"ahler potentials; for any $\varphi\in \mathcal{K}^{\T}_{\omega_0}(X)$, we let $\omega_\varphi:=\omega_0+dd^{c}\varphi>0$ be the corresponding K\"ahler structure on $X$, $\mu_{\varphi}:= \mu_{\omega_0} + d^c\varphi$ be the normalized $\omega_{\varphi}$ momentum map, and we let  \[ \mathcal{P}_{\omega_{\varphi}}:=\{ \ell(\mu_{\varphi}), \, \ell \in  \Aff(\Pol)\}\]
denote the space of all $\omega_{\varphi}$-Hamiltonians of elements of ${\rm Lie}(\T)$. We denote by 
\[\Pi_{\omega_{\varphi}} : C^{\infty}(X)^{\T} \to \mathcal{P}_{\omega_{\varphi}}\]  the orthogonal projection with respect to the  global $L^2$ product defined by $\omega_{\varphi}^{[n]}$ and let
\[C^{\infty}_{\perp}(X)^{\T}:=(1-\Pi_{\omega_0})(C^{\infty}(X)^{\T})\]
be the space of smooth $\T$-invariant functions on $X$, which are $L^{2}$-orthogonal to the finite dimensional space $\mathcal{P}_{\omega_0}$. 

We consider the map $\Psi:U\times \left(\mathcal{K}^{\T}_{\omega_0}(X)\cap C^{\infty}_{\perp}(X)^{\T}\right) \to\R^k\times C^{\infty}_{\perp}(X)^{\T}$ defined by
\[
\Psi(t,\varphi):=\Big(t,(1-\Pi_{\omega_0})\left(\Scal_{v_t}(\omega_\varphi)-w_t(\mu_\varphi)\right)\Big).
\]

By the computations in \cite{lahdili} and using that $\Scal_{v_0}(\omega_0)= w_0(\mu_{\omega_0})$, the differential  $(T_{(0,0)} \Psi)$ of $\Psi$ at $(0,0)$  in the direction of $\varphi$ is given by
\[
(T_{(0,0)}\Psi)(0,\dot \varphi)=\left(0, -2\left((1-\Pi_{\omega_0}\right)\left(\delta_{\omega_0}\delta_{\omega_0}(v_0(\mu_{\omega_0})(Dd\dot \varphi)^{-} \right)\right),
\]
where $\delta_{\omega_0} T := -\sum_{i=1}^{2n}(D_{e_i}T)(e_i, \cdot)$
denotes the codifferential of a $(p,0)$-tensor $T$ with respect to $\omega_0$, $D$ stands for the Levi-Civita connection of $\omega_0$ and $(D d \dot \varphi)^{-}$ is the $(2,0)+(0,2)$ part of the Hessian of $\dot \varphi \in C^{\infty}(X)^{\T}$.

Notice that the weighted Lichnerowicz operator ${\mathbb L}_{\omega_0}(\dot \varphi):=\delta_{\omega_0}\delta_{\omega_0}\Big(v_0(\mu_{\omega_0})(Dd\dot\varphi)^{-} \Big)$ is a linear self-adjoint operator with respect to the  $L^2$ inner product of $\omega_0$,  whose kernel in $C^{\infty}(X)^{\T}$  consists of all $\T$-invariant Killing potentials of $\omega_0$.  As $\T \subset \Aut_r(X)$ is a maximal torus, we conclude that the kernel of ${\mathbb L}_{\omega_0}$ is $\mathcal{P}_{\omega_0}$ and its image is  in $C^{\infty}_{\perp}(X)^{\T}$. Thus, by standard linear elliptic theory, { ${\mathbb L}_{\omega_0}$ (and hence $\dot \varphi \to (T_{0,0} \Psi) (0, \dot \varphi)$) is an isomorphism when restricted to $C^{\infty}_{\perp}(X)^{\T}$, and hence also when viewed as a linear map between the Banach spaces $W^{k+4}_{\perp}(X)^{\T}$ and $W^{k}_{\perp}(X)^{\T}$, where  for any positive integer $k$,  $W^k(X)^{\T}$ denotes the completion in $L^2(X,\omega_0)$ of $C^{\infty}(X)^{\T}$ with respect to the Sobolev norm involving derivatives up to order $k$,  and $W^k_{\perp}(X)^{\T}:=(1-\Pi_{\omega_0})(W^k(X)^{\T})$. By Sobolev's embedding theorem, for $k$ large enough, we have  $W^{k+4}(X)^{\T} \subset C^4(X)^{\T}$.  As $\Scal_{v_t}(\omega_\varphi)$ is a 4th order quasilinear operator in $\varphi$, $\Psi(t, \varphi)$ admits an extension (still denoted by $\Psi$) to a $C^1$ map $\Psi: U\times \left(\mathcal{K}^{\T}_{\omega_0}(X) \cap W^{k+4}_{\perp}(X)^{\T}\right) \to \R^k \times W^{k}_{\perp}(X)^{\T}$, where  the open subspace $\mathcal{K}^{\T}_{\omega_0}(X) \cap W^{k+4}_{\perp}(X)^{\T}$ is of $C^2$-K\"ahler metrics in $\alpha$.  Applying the implicit function theorem  to $\Psi$ and using bootstraping elliptic regularity for the map $\varphi \to \Scal_v(\omega_{\varphi})$}, we conclude that we can find a differentaible path of potentials $\varphi_t \in \mathcal{K}^{\T}_{\omega_0}(X)\cap C^{\infty}_{\perp}(X)^{\T}, \, |t|<\epsilon_0$ with $\varphi_0\equiv 0$, such that
\[\Scal_{v_t}(\omega_{\varphi_t})-w_t(\mu_{\varphi_t}) = \Pi_{\omega_0}\left(\Scal_{v_t}(\omega_{\varphi_t})-w_t(\mu_{\varphi_t})\right).\]
Now, by the assumption $\Fut_{v_t, w_t} =0$,  $\Pi_{\omega_{\varphi_t}}\left(\Scal_{v_t}(\omega_{\varphi_t})-w_t(\mu_{\varphi_t})\right) =0$. Thus, for $||t||<\epsilon$ small enough, one also has $\Pi_{\omega_{0}}(\Scal_{v_t}(\omega_{\varphi_t})-w_t(\mu_{\varphi_t}) =0$, which finishes the proof.
\end{proof}
The above deformation result applies in particular in the case of a $v_0$-soliton and a smooth family  weights $v_t \in \mathcal{F}(X)$.
\begin{cor} \label{c:Lebrun-Simanca} Let $(X, \T)$ be a smooth Fano manifold with canonically normalized polytope $\Pol_X$ and $\{v_t\in C^{\infty}(\Pol_X), \, t\in U \subset \R^k\} $ be a finite dimensional smooth family of positive weight functions,  parameterized by a neighbourhood $0\in U\subset  \R^k$,   which satisfy $\Fut_{v_t}\equiv 0$ for all $t\in U$. Suppose $v_0\in \mathcal{S}(X)$, i.e. $X$ admits a $\T$-invariant $v_0$-soliton. Then there exits $\varepsilon >0$ such that for all $||t||< \varepsilon$, $v_t \in \mathcal{S}(X)$.
\end{cor}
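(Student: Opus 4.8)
The plan is to deduce Corollary~\ref{c:Lebrun-Simanca} from the weighted LeBrun--Simanca deformation result of Lemma~\ref{l:(v,w)-LeBrun-Simanca}, after recasting the $v$-soliton equation on the Fano manifold $X$ as a weighted cscK equation. First I would recall the dictionary (see \cite{lahdili, ACL}): on $(X,\T)$ with anti-canonical class $\alpha = 2\pi c_1(X)$ and canonically normalized momentum map $\mu_{\omega}$, a $\T$-invariant K\"ahler metric $\omega \in \alpha$ is a $v$-soliton if and only if it satisfies
\[ \Scal_v(\omega) = w_v(\mu_{\omega}), \qquad w_v := 2\big(n + d\log v\big)v, \]
where $w_v$ is the weight function on $\Pol_X$ attached to $v$ as in the closing Remark of the previous section. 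One implication is obtained by contracting \eqref{v-soliton} with $\omega$ and applying the chain rule through $\mu_{\omega}$; for the converse, a weighted Bochner/maximum-principle argument --- generalizing the elementary fact that a constant scalar curvature K\"ahler metric in $2\pi c_1(X)$ is automatically K\"ahler--Einstein on a Fano manifold --- forces $\mathring{h}_{\omega} = \log v(\mu_{\omega})$ up to a constant.

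Next I would note that, in this anti-canonical setting, the weighted Futaki invariant $\Fut_{v, w_v}$ of Appendix~\ref{a:LeBrun-Simanca} vanishes precisely when the $v$-Futaki invariant $\Fut_v$ of \eqref{v-Futaki} does; this is the weighted analogue of the coincidence of the Mabuchi--Futaki and Ding--Futaki invariants on a Fano manifold, checked by a direct computation using \eqref{normalized-mu} and the definition of $\Scal_v$. Granting this, $\{(v_t, w_{v_t})\}_{t\in U}$ is a finite-dimensional smooth family of weights with $v_t > 0$ and $\Fut_{v_t, w_{v_t}} \equiv 0$ (since $\Fut_{v_t}\equiv 0$ by hypothesis), and the $v_0$-soliton $\omega_0$ provided by $v_0 \in \mathcal{S}(X)$ is a $(v_0, w_{v_0})$-cscK metric. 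Lemma~\ref{l:(v,w)-LeBrun-Simanca} then produces $\varepsilon > 0$ and a differentiable path of $\T$-invariant K\"ahler metrics $\omega_t \in \alpha$, $\|t\| < \varepsilon$, with $\Scal_{v_t}(\omega_t) = w_{v_t}(\mu_{\omega_t})$; by the dictionary each $\omega_t$ is a $v_t$-soliton, hence $v_t \in \mathcal{S}(X)$, which is the assertion.

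The routine verifications I would dispatch quickly are: the smooth dependence of $t\mapsto w_{v_t}$, and the fact that the normalization $\mu_{\omega}(X) = \Pol_X$ used in Lemma~\ref{l:(v,w)-LeBrun-Simanca} coincides with the canonical normalization on a Fano manifold. The only substantive point --- and the main obstacle --- is the first step: pinning down the precise equivalence between the $v$-soliton equation (a complex Monge--Amp\`ere equation for $\log v(\mu_{\omega})$) and the fourth-order weighted cscK equation $\Scal_v(\omega) = w_v(\mu_{\omega})$, together with the matching of the two Futaki obstructions. This input is available in \cite{lahdili, ACL}, but it carries all of the weighted bookkeeping. As an alternative that sidesteps the dictionary, one could instead re-run the implicit function theorem of Lemma~\ref{l:(v,w)-LeBrun-Simanca} directly for the $v_t$-Ding functionals ${\bf D}_{v_t}$: the linearization of the $v_0$-soliton equation at $\omega_0$ is a second-order elliptic operator, self-adjoint for the weighted measure $v_0(\mu_{\omega_0})\omega_0^{[n]}$, whose cokernel on $\T$-invariant functions is the finite-dimensional space of $\T$-Hamiltonians, so the obstruction to solving is killed by $\Fut_{v_t}\equiv 0$ and the argument goes through verbatim.
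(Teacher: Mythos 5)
Your proposal is correct and takes essentially the same route as the paper: the paper's proof simply invokes \cite[Prop.~1]{AJL} for the equivalence between the $v$-soliton equation and $\Scal_v(\omega)=\tilde v(\mu_\omega)$ with $\tilde v(x)=2(n+\langle d\log v,x\rangle)v(x)$, invokes \cite[Lemma~B1]{AJL} for the equivalence $\Fut_v\equiv 0 \Leftrightarrow \Fut_{v,\tilde v}\equiv 0$, and then applies Lemma~\ref{l:(v,w)-LeBrun-Simanca}. The only substantive point you flag as an obstacle (the soliton/weighted-cscK dictionary and the matching of the two Futaki obstructions) is exactly the content of those two cited results.
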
 
\begin{proof} By \cite[Prop.1]{AJL}, a $v$-soliton  $\omega$ on $(X, \T, 2\pi c_1(X))$ can be characterized by the property 
\[ \Scal_v(\omega)= \tilde v (\mu_{\omega}),\]
where $\tilde v(x):= 2(n+ \langle d\log v, x\rangle)v(x)$. Furthermore, by \cite[Lemma B1]{AJL} we have
\[\Fut_v \equiv 0 \, \Leftrightarrow \, \Fut_{v, \tilde v}\equiv 0.\]
The claim then follows from Lemma~\ref{l:(v,w)-LeBrun-Simanca}.
\end{proof}

\section{Curvature identities for $v$-solitons}\label{a:Curvature-Identities}

\begin{lemma}\cite{PW}\label{l:identity-scal} Suppose $(g, J, \omega)$ is a $\T$-invariant  K\"ahler metric which is a KRS, i.e. satisfies
\begin{equation}\label{gradient-soliton} \Ric(\omega) - \omega = \frac{1}{2} dd^c f,\end{equation}
where $f$ is  the potential of a real Killing and holomorphic vector field $\tau = J grad_g f$.
Then the scalar curvature $\Scal(\omega)$  satisfies the following identity 
\begin{equation}\label{scal-identity}\frac{1}{2} \Delta \Scal(\omega) - \langle d\Scal(\omega), df\rangle_g + \Scal(\omega)=  ||\Ric_g||^2. \end{equation}
\end{lemma}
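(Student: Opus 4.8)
The plan is to recognise that a K\"ahler--Ricci soliton is, in particular, a Riemannian gradient Ricci soliton, and then to prove \eqref{scal-identity} as a pointwise identity valid for any such soliton --- which is essentially the content of \cite{PW}. First I would record that, since $\tau=J\,\grad_g f$ is holomorphic (equivalently, the $(2,0)+(0,2)$-part of $\nabla df$ vanishes), the symmetric $2$-tensor associated with $\tfrac12 dd^c f$ is exactly the Riemannian Hessian $\nabla df$. Thus \eqref{gradient-soliton} is equivalent to the tensorial identity $\Ric_g-g=\nabla df$, i.e. $(g,f)$ is a gradient Ricci soliton with normalisation $\Ric_g+\nabla d(-f)=g$. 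From this point on the K\"ahler structure is irrelevant and one argues purely Riemannianly.

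The derivation then proceeds in three short steps. First, \emph{trace}: tracing $\Ric_g-g=\nabla df$ gives $\Scal(\omega)=2n+\Delta f$, so $d\Scal(\omega)=d(\Delta f)$. Second, the \emph{first integral}: taking the divergence of the soliton equation and using the contracted second Bianchi identity $\delta\Ric_g=-\tfrac12 d\Scal(\omega)$ on one side and the Bochner identity $\delta(\nabla df)=-d(\Delta f)-\Ric_g(\grad f,\cdot)$ on the other, then substituting $d(\Delta f)=d\Scal(\omega)$ from the first step, yields $\Ric_g(\grad f,\cdot)=-\tfrac12\, d\Scal(\omega)$, i.e. $\grad\Scal(\omega)=-2\,\Ric_g(\grad f)$. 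Third, a \emph{second divergence}: taking the divergence of the relation just obtained, using the contracted Bianchi identity once more for the term $\delta\Ric_g$ that appears, and substituting $\nabla df=\Ric_g-g$ to evaluate $\langle\Ric_g,\nabla df\rangle=\|\Ric_g\|^2-\Scal(\omega)$, one is left with a linear combination of $\Delta\Scal(\omega)$, $\langle d\Scal(\omega),df\rangle_g$ and $\Scal(\omega)$ equal to $\|\Ric_g\|^2$, which is \eqref{scal-identity}.

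There is no conceptual obstacle here --- every step is a one-line application of the second Bianchi identity or of Bochner's formula, and the statement is purely pointwise. The only genuinely delicate point is bookkeeping the normalisations so that the coefficients come out as $\tfrac12$, $-1$ and $+1$: one must fix consistently the sign convention for the Laplacian $\Delta$, the factor relating $\tfrac12 dd^c f$ to $\nabla df$ (this is exactly where holomorphicity of $\tau$ is used: otherwise the surviving $(2,0)+(0,2)$-Hessian term would contribute an extra curvature term to \eqref{scal-identity}), the normalisation of the pointwise inner product $\langle\cdot,\cdot\rangle_g$ on $1$-forms, and the convention that on a K\"ahler manifold $\|\Ric_g\|^2$ agrees with the squared norm of the Ricci form. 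With these conventions pinned to those used elsewhere in the paper, the three steps above produce \eqref{scal-identity}; alternatively, one may simply quote it from \cite{PW}.
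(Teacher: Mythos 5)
Your proposal is correct and is essentially the paper's own argument: the paper likewise reduces to the fourth identity of \cite[Lemma 2.5]{PW} for the gradient Ricci soliton $\Ric_g-g=\nabla df$, obtaining the first integral $d\Scal(\omega)=2\Ric(\omega)(\tau)=-2\Ric_g(\grad_g f)$ (via the Bochner formula for the Killing $1$-form $d^cf=\tau^\flat$ rather than your divergence-plus-contracted-Bianchi computation, which yields the same relation) and then differentiating the soliton equation and tracing. Your caution about bookkeeping is well placed: both your route and the paper's own final lines actually produce the coefficient $-\tfrac12$ (not $-1$) on $\langle d\Scal(\omega),df\rangle_g$, in agreement with \cite{PW}, so the displayed \eqref{scal-identity} appears to carry a spurious factor of $2$ on that term --- harmless for Corollary~\ref{c:scal>0}, where the gradient term vanishes at the critical point where the maximum principle is applied.
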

\begin{proof}  
This is the forth formula in \cite[Lemma 2.5]{PW} (which generally holds for any gradient Ricci soliton). We reproduce its proof in our K\"ahler situation for convenience of the Reader.

We denote by $\Ric_g$ the (symmetric) Ricci tensor of $g$,  so that the (K\"ahler) Ricci form is $\Ric(\omega)(\cdot, \cdot)= \langle \Ric_gJ \cdot, \cdot \rangle$. 
As $\tau := J\grad_g f$ is a Killing vector field, we have 
\[ \nabla d^c f = \frac{1}{2} dd^c f,\]
and, by Bochner's identity, 
$ \Delta (d^cf) =  2\Ric_g(d^cf)$ or, equivalently,
\begin{equation}\label{killing} 
-d\left(\Delta f\right) = 2\Ric(\omega)(\tau).
\end{equation}
Taking trace with respect to $\omega$ in the soliton equation \eqref{gradient-soliton} yields
\[ \Scal(\omega) -2n = - \Delta f,\]
so that we have 
\begin{equation}\label{Scal} d\Scal(\omega) =- \Delta d f= 2\Ric(\omega)(\tau), \qquad \Delta \Scal(\omega)= - \Delta \Delta f. \end{equation}
Applying the covariant derivative $\nabla_X$ in \eqref{gradient-soliton} yields
\begin{equation}\label{1-jet} \nabla_X \left(\Ric(\omega)\right) = \frac{1}{2}\nabla_X (dd^c f).\end{equation}
Taking interior product with $\xi$  in \eqref{1-jet} and using \eqref{Scal} and \eqref{gradient-soliton} again gives
\[
\begin{split}
   \frac{1}{2} (\nabla_X dd^c f)(\tau) &= \left(\nabla_X \Ric(\omega)\right)(\tau) = \nabla_X(\Ric(\omega)(\tau)) - \Ric(\omega)(\nabla_X \tau)  \\
    &= \frac{1}{2} \nabla_X d\Scal(\omega) -\Ric(\omega)(\Ric_g(JX) - JX) \\
    &=  \frac{1}{2}\nabla_X d\Scal(\omega) + \Ric_g(\Ric_g(X) - X).
    \end{split} \]
Taking trace gives 
\[ \frac{1}{2} (\Delta d^cf )(\tau)=-\frac{1}{2} \Delta \Scal(\omega) + ||\Ric_g||^2 - \Scal(\omega).\]
By \eqref{Scal}  we  have $\frac{1}{2} (\Delta d^cf )(\tau)= -\frac{1}{2}\langle d\Scal, df \rangle$, so we  finally get \eqref{scal-identity}. \end{proof}

As an immediate corollary, we get the following fact, which we presume is well-known to experts,  but which we were not able to trace in the literature.
\begin{cor}\label{c:scal>0} Suppose $(N, \Ds, J, \hat \xi, \eta_{\hat \xi})$ is a compact Sasaki manifold which is a transversal KRS. Then the transversal scalar curvature is everywhere positive.
\end{cor}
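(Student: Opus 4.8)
The plan is to reduce the statement to the curvature identity \eqref{scal-identity} of Lemma~\ref{l:identity-scal}, read off transversally, and then run the maximum principle on the compact manifold $N$.

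I would first set up the transversal picture. After rescaling $\hat\xi$ if necessary (observation (iii) after Definition~\ref{defnTRS}) we may assume $\lambda_{\hat\xi}=1$; writing $\omega:=d\eta^{\hat\xi}$ for the transversal K\"ahler form, $g$ for the associated transversal K\"ahler metric, and $f:=\eta^{\hat\xi}(\tau_{\hat\xi})$ for the transversal Hamiltonian of the normalized soliton field $\tau_{\hat\xi}$, the transversal KRS equation \eqref{eq:trKRS} becomes
\[ \rho_\omega-\omega=\tfrac12\, dd^c_{\hat\xi} f, \]
which is exactly the gradient soliton equation \eqref{gradient-soliton} for the transversal K\"ahler metric, with $\tau_{\hat\xi}$ playing the role of the holomorphic Killing field $J\grad_g f$. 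Since the transversal scalar curvature $\Scal(\omega)$, the squared norm $\|\Ric_g\|^2$ of the transversal Ricci tensor, the function $f$ and the basic Laplacian $\Delta$ are all $\hat\xi$-basic, Lemma~\ref{l:identity-scal} applied on the local leaf spaces of the Reeb foliation produces, as an identity of basic (hence $\hat\T$-invariant, hence genuine smooth) functions on $N$,
\[ \tfrac12\Delta\Scal(\omega)-\langle d\Scal(\omega),df\rangle_g+\Scal(\omega)=\|\Ric_g\|^2. \]

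Next I would extract the sign of $\Scal(\omega)$. Let $p\in N$ be a point where the basic function $\Scal(\omega)$ attains its global minimum, which exists by compactness of $N$. There $d\Scal(\omega)(p)=0$, and since $p$ is a minimum the second-order term of the identity has the favourable sign at $p$, so evaluating at $p$ and using $\|\Ric_g\|^2\ge 0$ gives $\Scal(\omega)(p)\ge 0$; hence $\Scal(\omega)\ge 0$ everywhere. To promote this to strict positivity, I would rewrite the identity as the differential inequality
\[ \Delta\Scal(\omega)-2\langle d\Scal(\omega),df\rangle_g+2\Scal(\omega)=2\|\Ric_g\|^2\ge 0. \]
After writing the principal part in divergence form, $\Scal(\omega)$ is a supersolution of a second-order elliptic operator with non-positive zeroth-order coefficient which attains the interior minimum value $0$ at $p$; the strong minimum principle (applied on a small local leaf space around $p$, or directly on $N$ using that $\Delta$ there agrees with the Riemannian Laplacian on $\hat\T$-invariant functions) then forces $\Scal(\omega)\equiv 0$ on the connected manifold $N$.

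Finally I would rule out the case $\Scal(\omega)\equiv 0$. If it held, the identity would give $\|\Ric_g\|^2\equiv 0$, i.e.\ the transversal Ricci form $\rho_\omega$ vanishes identically; substituting into $\rho_\omega-\omega=\tfrac12 dd^c_{\hat\xi}f$ shows $\omega=-\tfrac12 dd^c_{\hat\xi}f$, so $[d\eta^{\hat\xi}]_{B,\hat\xi}=0$ in basic cohomology, contradicting the positivity of the transversal K\"ahler form $d\eta^{\hat\xi}$ (equivalently, contradicting \eqref{Fano-Sasaki}, since $\lambda_{\hat\xi}>0$). Therefore $\Scal(\omega)>0$ everywhere on $N$. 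The only step that requires genuine input is this last one: the identity together with the weak maximum principle only delivers $\Scal(\omega)\ge 0$, and strictness really uses the transversally Fano condition \eqref{Fano-Sasaki} to exclude the degenerate transversally Ricci-flat situation; the sign bookkeeping in the strong maximum principle, and the reduction to local leaf spaces needed to invoke Lemma~\ref{l:identity-scal}, are routine.
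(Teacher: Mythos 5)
Your proof is correct and follows essentially the same route as the paper: apply the gradient-soliton identity \eqref{scal-identity} of Lemma~\ref{l:identity-scal} on local $\hat\xi$-quotients and run the strong maximum principle at a global minimum of the transversal scalar curvature on the compact manifold $N$. The only difference is that you spell out the borderline case $\Scal(\omega)\equiv 0$ (excluded via $\Ric_g\equiv 0$ forcing $[d\eta^{\hat\xi}]_{B,\hat\xi}=0$, which is impossible), a step the paper's terse invocation of the strong maximum principle leaves implicit; this is a welcome clarification, not a departure.
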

\begin{proof} We denote by $\Scal(\omega)$ the transversal scalar curvature and consider a local $\hat\xi$-quotient $(U, J, \omega, g)$ around a point of global minimum of $\Scal(\omega)$ on $N$. Notice that $(U, J)$ is a (open) complex manifold and $\omega$ descends to define a KRS on $U$. By Lemma~\ref{l:identity-scal}, the smooth function $\Scal(\omega)$ satisfies on $U$ the identity \eqref{scal-identity}.
As $\Scal(\omega)$ achieves its minimum in the interior of $U$, we get by the strong maximum principle $\Scal(\omega) >0$ on $U$, and hence on $N$.
\end{proof}

\end{document}